\title{Global well-posedness of partially periodic {KP-I} equation in the energy space and application}
\date{}
\author{Tristan Robert\\ \emph{Université de Cergy-Pontoise}\\ \emph{Laboratoire AGM}\\ \emph{2 av. Adolphe Chauvin, 95302 Cergy-Pontoise Cedex, France}\\ \emph{tristan.robert@u-cergy.fr}}
\begin{document}
\maketitle
\begin{abstract}
In this article, we address the Cauchy problem for the KP-I equation \[\drt u + \drx^3 u -\drx^{-1}\dry^2u + u\drx u = 0\] for functions periodic in $y$. We prove global well-posedness of this problem for any data in the energy space $\E = \{u\in L^2,~\drx u \in L^2,~\drx^{-1}\dry u \in L^2\}$. We then prove that the KdV line soliton, seen as a special solution of KP-I equation, is orbitally stable under this flow, as long as its speed is small enough.
\paragraph*{Keywords :} Kadomtsev-Petviashvili equation, global well-posedness, orbital stability, KdV line soliton.
\end{abstract}
\section{Introduction}
\subsection{Motivations}
The Kadomtsev-Petviashvili equations
\begin{equation}\label{KP equations}
\drt u + \drx^3 u + \epsilon \drx^{-1}\dry^2 u + u\drx u = 0
\end{equation}
were first introduced in \cite{KP1970} as two-dimensional generalizations of the Korteweig-de Vries equation
\begin{equation}\label{KdV equation}
\drt u + \drx^3 u + u\drx u = 0
\end{equation} They model long, weakly nonlinear waves propagating essentially along the $x$ direction with a small dependence in the $y$ variable. The coefficient $\epsilon \in \{-1;1\}$ takes into account the surface tension. When this latter is strong ($\epsilon = -1$), (\ref{KP equations}) is then called KP-I equation, whereas KP-II equation refers to a small surface tension ($\epsilon = +1$).

The KdV equation (\ref{KdV equation}) admits a particular family of traveling waves solutions, the so-called solitons $Q_c(x-ct)$ with speed $c>0$ :
\[Q_c(x) := 3c\cdot\cosh\left(\frac{\sqrt{c}}{2}x\right)^{-2}\]
From the work of Benjamin \cite{Benjamin}, we know that these solutions are orbitally stable in $H^1(\R)$ under the flow generated by the KdV equation (\ref{KdV equation}), meaning that every solution of (\ref{KdV equation}) with initial data close to $Q_c$ in $H^1(\R)$ remains close in $H^1(\R)$ to the $Q_c$-orbit (under the action of translations) at any time $t>0$.

Looking at (\ref{KP equations}), we see that every solution of the KdV equation (\ref{KdV equation}) is a solution of the KP equations (\ref{KP equations}), seen as a function independent of $y$. It is then a natural question to ask whether $Q_c$ is orbitally stable or unstable under the flow generated by (\ref{KP equations}). In order to do so, we first need a global well-posedness theory for (\ref{KP equations}) in a space containing $Q_c$. In particular, this rules out any well-posedness result in an anisotropic Sobolev space $H^{s_1,s_2}(\R^2)$. A more suited space to look for is the energy space for functions periodic in $y$ : 
\begin{equation}\label{definition espace donnée initiale}
\E(\R\times\T) := \left\{u_0(x,y)\in L^2(\R\times\T),~~\drx u_0\in L^2(\R\times\T),~~\drx^{-1}\dry u_0 \in L^2(\R\times\T)\right\}
\end{equation}
where $\T = \R/2\pi\Z$. Indeed, due to the Hamiltonian structure of (\ref{KP equations}), the mass
\begin{equation}\label{definition masse KP}
\mathcal{M}(u)(t) := \int_{\R\times\T}u^2(t,x,y)\dx\dy
\end{equation}
and the energy
\begin{equation}\label{definition energie KP}
\mathcal{E}(u)(t) := \int_{\R\times\T}\left\{(\drx u)^2(t,x,y) + (\drx^{-1}\dry u)^2(t,x,y) - \frac{1}{3} u^3(t,x,y)\right\}\dx\dy
\end{equation}
are (at least formally) conserved by the flow, i.e. $\mathcal{M}(u)(t) = \mathcal{M}(u)(0)$ and\\ $\mathcal{E}(u)(t) = \mathcal{E}(u)(0)$, for any time $t$ and any solution $u$ of the KP-I equation defined on $[0,t]$.  
The conservation of the energy allows one to extend local solutions in $\mathcal{C}([-T,T],\E)$ into solutions globally defined. In this article, we thus focus on the following  Cauchy problem for the KP-I equation set on $\R\times \T$ :
\begin{equation}\label{equation KP1}
\begin{cases}\drt u + \drx^3 u -\drx^{-1}\dry^2 u + u\drx u = 0,~~(t,x,y)\in \R^2\times\T\\ u(t=0)=u_0\in \E(\R\times\T)\end{cases}
\end{equation}
\subsection{Well-posedness results}
The KP equations (\ref{KP equations}) have been extensively studied in the past few decades. Using a standard energy method, I\'{o}rio and Nunes \cite{IorioNunes} proved existence and uniqueness of zero mean value solutions in $H^s$, $s>2$, for both KP equations on $\R^2$ and $\T^2$. From the point of view of well-posedness, the KP-II equation is much better understood. Indeed, since the pioneering work of Bourgain \cite{bourgain1993kp}, we know that the KP-II equation is globally well-posed on both $L^2(\R^2)$ and $L^2(\T^2)$. On $\R^2$, Takaoka and Tzvetkov \cite{Takaoka} and Isaza and Mejia \cite{IsazaMejia} pushed the low regularity local well-posedness theory down to the anistropic Sobolev space $H^{s_1,s_2}(\R^2)$ with $s_1>-1/3$, $s_2\supeg 0$. Later, Hadac \cite{Hadac} and then Hadac, Herr and Koch \cite{HadacHerrKoch} reached the threshold $s_1\supeg -1/2$, $s_2\supeg 0$ which is the scaling critical regularity for the KP-II equation. As for the initial value problem on $\R\times\T$, in order to study the stability of the KdV soliton under the flow of the KP-II equation, Molinet, Saut and Tzvetkov \cite{MST2011} proved global well-posedness on $L^2(\R\times\T)$.

The situation is radically different regarding the Cauchy theory for the KP-I equation. From the work of Molinet, Saut and Tzvetkov \cite{MST2002}, we know that this equation badly behaves with respect to pertubation methods. In particular, it is not possible to get well-posedness of (\ref{equation KP1}) using the standard Fourier restriction norm method of Bourgain, nor any method using a fixed point argument on the Duhamel formula associated with (\ref{equation KP1}) since Koch and Tzvetkov \cite{Koch2008} proved that on $\R^2$, the flow map even fails to be uniformly continuous on bounded sets of $\mathcal{C}([-T,T],\E)$. It is thus expected to have the same ill-posedness result on $\R\times\T$. Using the refined energy method introduced in \cite{KochTzvetkov2003BO}, Kenig \cite{Kenig2004}, and then Ionescu and Kenig \cite{ionescu2009} proved global well-posedness in the "second energy space"
\[Z^2 =: \left\{u\in L^2,~\drx^2 u \in L^2,~\drx^{-2}\dry^2 u \in L^2\right\}\]
for functions on $\R^2$, and both $\R\times\T$ and $\T^2$, respectively. Lately, Ionescu, Kenig and Tataru \cite{IonescuKenigTataru2008} introduced the so-called short time Fourier restriction norm method and were able to prove global well-posedness of the KP-I equation in the energy space $\E(\R^2)$. Zhang \cite{Zhang2015} adapted this method in the periodic setting and got local well-posedness in the Besov space $\B^1_{2,1}(\T^2)$, which is almost the energy space but still strictly embedded in it. Overcoming the logarithmic divergence that appears in \cite{Zhang2015} to reach the energy space $\E(\T^2)$ is still an important open problem. In our case, we prove the following theorem, which answers the global well-posedness issue in the partially periodic setting :
\begin{theoreme}\label{theoreme principal}
\begin{enumerate}[label=(\alph*)]
\item\label{theoreme principal partie 1} \emph{Global well-posedness for smooth data}

Take $u_0\in \E^{\infty}(\R\times\T)$. Then, (\ref{equation KP1}) admits a unique global solution \[u = \Phi^{\infty}(u_0)\in\mathcal{C}\left(\R,\E^{\infty}(\R\times\T)\right)\]
which defines a flow map
\[\Phi^{\infty} : \E^{\infty}(\R\times\T) \rightarrow \mathcal{C}\left(\R,\E^{\infty}(\R\times\T)\right)\]
In addition, for any $T>0$ and $\alpha\in\N^*$,
\begin{equation}\label{equation estimation norme}
\norme{L^{\infty}_T\E^{\alpha}}{\Phi^{\infty}(u_0)}\infeg C(T,\alpha,\norme{\E^{\alpha}}{u_0})
\end{equation}
\item\label{theoreme principal partie 2} \emph{Global well-posedness in the energy space} 

For any $u_0\in \E(\R\times\T)$ and $T>0$, there exists a unique solution $u$ to (\ref{equation KP1}) in the class
\begin{equation}\label{classe unicite}
\mathcal{C}([-T;T],\E)\cap \mathbf{F}(T)\cap \B(T)
\end{equation}
Moreover, the corresponding global flow
\[\Phi^1 : \E \rightarrow \mathcal{C}(\R,\E)\]
is continuous and leaves $\mathcal{M}$ and $\mathcal{E}$ invariants.
\end{enumerate}
\end{theoreme}
The function spaces $\El^{\alpha}$, $\El^{\infty}$, $\mathbf{F}$ and $\B$ are defined in section~\ref{section espaces fonctions} below.
\subsection{Stability results}
As far as stability issues are concerned, Mizumachi and Tzvetkov \cite{Mizumachi2012} proved that the KdV line soliton is stable under the flow generated by the KP-II equation on $L^2(\R\times\T)$ for any speed $c>0$. Regarding the KP-I equation, Rousset and Tzvetkov \cite{Rousset2012} proved that $Q_c$ is orbitally unstable in $\E^1(\R\times\T)$ under the KP-I flow constructed on $Z^2(\R\times\T)$ in \cite{ionescu2009}, whenever $c> c^* = 4/\sqrt{3}$, and that it is orbitally stable if $c<c^*$. Thus, as a byproduct of \cite{Rousset2012} and of our theorem~\ref{theoreme principal}, we can extend the range of admissible perturbations in \cite[Theorem 1.4]{Rousset2012} to get
\begin{corollaire}\label{theoreme instabilite}
Assume $c<4/\sqrt{3}$, then $Q_c$ is orbitally stable in $\E$.

More precisely, for every $\varepsilon>0$, there exists $\delta>0$ such that for every $u_0\in \E(\R\times\T)$ satisfying
\[\norme{\E(\R\times\T)}{u_0-Q_c}<\delta\]
we have
\[\sup_{t\in\R}\inf_{a\in\R}\norme{\E(\R\times\T)}{\Phi^1(u_0)(t,x-a,y)-Q_c(x-ct)}<\varepsilon\]
\end{corollaire}
The proof of corollary~\ref{theoreme instabilite}~is a straightforward adaptation of the argument in \cite{Rousset2012}. Indeed, the proof of \cite[Theorem 1.4]{Rousset2012} only uses the extra conditions $\drx^2 u \in L^2$, $\drx^{-2}\dry^2 u \in L^2$ to have the global solutions from \cite{ionescu2009}. For the sake of completeness, we present the outlines of the proof in section~\ref{section stabilite}.
\subsection{Strategy of the proof}
Let us now briefly discuss the main ingredients in the proof of theorem~\ref{theoreme principal}.

As pointed out above, it is irrelevant to look for functions spaces $\mathbf{F}(T)\hookrightarrow \mathcal{C}([-T,T],\E)$ and $\mathbf{N}(T)$ such that any solution to (\ref{equation KP1}) satisfies
\begin{enumerate}
\item a linear estimate
\begin{equation}\label{estimation lineaire Bourgain}
\norme{\mathbf{F}(T)}{u} \lesssim \norme{\E}{u_0}+\norme{\mathbf{N}(T)}{\drx(u^2)}
\end{equation}
\item a bilinear estimate
\begin{equation}\label{estimation bilineaire Bourgain}
\norme{\mathbf{N}(T)}{\drx(uv)}\lesssim \norme{\mathbf{F}(T)}{u}\norme{\mathbf{F}(T)}{v}
\end{equation}
\end{enumerate}
In order to construct solutions in $\E$, we will thus use the functions spaces $\mathbf{F}(T)$, $\mathbf{N}(T)$ and $\B(T)$ introduced in \cite{IonescuKenigTataru2008}. Those spaces are built to combine the idea introduced in \cite{KochTzvetkov2003BO} of a priori estimates on short times (depending on the frequency) for frequency localized solutions, with the standard Bourgain spaces $X^{s,b}$ of \cite{bourgain1993kp}. Thus, we will replace (\ref{estimation lineaire Bourgain})-(\ref{estimation bilineaire Bourgain}) with
\begin{enumerate}
\item a linear estimate
\begin{equation}\label{estimation lineaire}
\norme{\mathbf{F}(T)}{u}\lesssim \norme{\B(T)}{u}+\norme{\mathbf{N}(T)}{\drx(u^2)}
\end{equation}
\item a bilinear estimate
\begin{equation}\label{estimation bilineaire}
\norme{\mathbf{N}(T)}{\drx(uv)}\lesssim \norme{\mathbf{F}(T)}{u}\norme{\mathbf{F}(T)}{v}
\end{equation}
\item an energy estimate
\begin{equation}\label{estimation d'energie}
\norme{\B(T)}{u}^2\lesssim \norme{\E}{u_0}^2+\norme{\mathbf{F}(T)}{u}^{3}
\end{equation}
\end{enumerate}
With (\ref{estimation lineaire})-(\ref{estimation bilineaire})-(\ref{estimation d'energie}) at hand, we will get the existence part of theorem~\ref{theoreme principal} from a standard continuity argument.

To get uniqueness, we will prove the analogous of (\ref{estimation lineaire})-(\ref{estimation bilineaire})-(\ref{estimation d'energie}) for the difference equation, at the $L^2$ level :
\begin{eqnarray}
\norme{\overline{\mathbf{F}}(T)}{u-v}\lesssim \norme{\overline{\B}(T)}{u-v}+\norme{\overline{\mathbf{N}}(T)}{\drx\{(u-v)(u+v)\}}\label{estimation lineaire difference}\\
\norme{\overline{\mathbf{N}}(T)}{\drx\{(u-v)(u+v)\}}\lesssim \norme{\overline{\mathbf{F}}(T)}{u-v}\norme{\mathbf{F}(T)}{u+v}\label{estimation bilineaire difference}\\
\norme{\overline{\B}(T)}{u-v}^2\lesssim \norme{L^2}{u_0-v_0}^2+\norme{\mathbf{F}(T)}{u+v}\norme{\overline{\mathbf{F}}(T)}{u-v}^{2}\label{estimation d'energie difference}
\end{eqnarray}
The main technical difficulties, compared to the case of $\R^2$, are the lack of a scale-invariant Strichartz estimate, and the impossibility to make the change of variables as in the proof of \cite[Lemma 5.1 (a)]{IonescuKenigTataru2008} to estimate the volume of the resonant set. The first one is handled with frequency localized Strichartz estimates in the spirit of \cite{bourgain1993kp,MST2011}. For the second one, we follow Zhang \cite[Lemma 3.1]{Zhang2015}, but looking closely on the computations we are able to take advantage of the smallness of the intervals in which the frequency for the $x$ variable varies (note that this is not possible in \cite{Zhang2015} since this frequency lives in $\Z$) and to recover the same estimate as in \cite{IonescuKenigTataru2008} in this case. We also use a weighted Bourgain type space to deal with the logarithmic divergence in the energy estimate.

\subsection{Organization of the paper}
Sections~\ref{section notations} and~\ref{section espaces fonctions} introduce general notations as well as functions spaces. We begin the proof of theorem~\ref{theoreme principal} in section~\ref{section estimation lineaire} by proving estimate (\ref{estimation lineaire}). After establishing some general dyadic estimates in section~\ref{section estimation dyadique}, sections~\ref{section estimation bilineaire} and~\ref{section estimation energie} deal with (\ref{estimation bilineaire}) and (\ref{estimation d'energie}) respectively. The proof of theorem~\ref{theoreme principal} is then completed in section~\ref{section preuve}. Finally, in the last section~\ref{section stabilite} we recall the arguments to obtain corollary~\ref{theoreme instabilite}.
\section{Notations}\label{section notations}
\begin{itemize}
\item We use the notations of \cite{Molinet2007} to deal with Fourier transform of periodic functions with a large period $2\pi\lambda >0$. Let $\lambda\supeg 1$ be fixed. We define $\dq$ to be the renormalized counting measure on $\lambda^{-1}\Z$ :
\[\int u(q)\dq := \frac{1}{\lambda}\sum_{q\in\lambda^{-1}\Z}u(q)\]
In the sequel, all the Lebesgue norms in $q$ will be with respect to $\dq$. Moreover, the space-time Lebesgue norms are defined as
\[\norme{L^p_{\xi,q} L^r_{\tau}}{f}:= \left\{\int_{\R\times\Zl}\left(\int_{\R}|f|^r\dtau\right)^{p/r}\dxi\dq{}\right\}^{1/p}\]

For a $2\pi\lambda$-periodic function $f$, we define its Fourier transform as 
\[\widehat{f}(q):=\int_0^{2\pi\lambda}\e^{-i qx}f(y)\dy,~q\in \lambda^{-1}\Z\]
and we have the inversion formula
\[f(y)= \int \e^{i qy}\widehat{f}(q)\dq\]
 We write $\T_{\lambda} := \R/2\pi\lambda\Z$. Whenever $\lambda =1$ we drop the lambda.
\item The Fourier transform of a function $u_0(x,y)$ on $\R\times\T_{\lambda}$ or $u(t,x,y)$ on $\R^2 \times \T_{\lambda}$ is denoted $\widehat{u}$ or $\F u$ :
\[\widehat{u_0}(\xi,q) := \int_{\R\times\T_{\lambda}} \e^{-i(\xi x+qy)}u_0(x,y)\dx\dy,~(\xi,q)\in\R\times \lambda^{-1}\Z\]
and
\[\widehat{u}(\tau,\xi,q) := \int_{\R^2\times\T_{\lambda}}\e^{-i(\tau t+\xi x+qy)}u(t,x,y)\dt\dx\dy,~(\tau,\xi,q)\in\R^2\times \lambda^{-1}\Z\]
$\F_t u$ stands for the partial Fourier transform of $u(t,x,y)$ with respect to $t$, whereas $\F_{xy} u$ means the partial Fourier transform of $u$ with respect to space variables $x,y$, and similarly for $\F_x$, $\F_y$.\\
We always note $(\tau,\xi,q)\in \R^2\times \lambda^{-1}\Z$ the Fourier variables associated with $(t,x,y)\in\R^2\times\T_{\lambda}$.\\
We note eventually $\zeta = (\xi,q) \in \R\times \lambda^{-1}\Z$.
\item We denote $\star$ the convolution product for functions on $\R$ or $\lambda^{-1}\Z$ : to specify the variables,
\[ f(x')\star_x g(x') \text{ means } (f\star g)(x) = \int_{\R \text{ or }\lambda^{-1}\Z}f(x-x')g(x')\dx'\]
\item We use the "bracket" notation $\crochet{\cdot}$ for the weight in the definition of inhomogeneous Sobolev spaces, i.e
\[\crochet{\xi}^s:= \left(1+\xi^2\right)^{s/2}\]
\item $U(t)$ is the unitary group defined by the linear evolution equation associated with (\ref{equation KP1}) : 
\[\forall u_0\in L^2(\R\times\T_{\lambda}),~~\widehat{U(t)u_0}(\xi,q)= \e^{it\omega(\xi,q)}\widehat{u_0}(\xi,q)\]where
\[\omega(\xi,q):= \xi^3+q^2/\xi\]
We also note
\[\sigma(\tau,\xi,q) := \tau - \omega(\xi,q) = \tau - \xi^3 - \frac{q^2}{\xi}\]
the modulation associated with (\ref{equation KP1}).
\item For positive reals $a$ and $b$, $a\lesssim b$ means that there exists a positive constant $c>0$ (independent of the various parameters, including $\lambda$) such that $a \infeg c\cdot b$. \\
The notation $a\sim b$ stands for $a\lesssim b$ and $b\lesssim a$.
\item We note $M\in \R_+^*$ (respectively $K\supeg 1$) the dyadic frequency decomposition of $|\xi|$ (respectively of $\crochet{\sigma}$), i.e $M\in 2^{\Z}$ and $K\in 2^{\N}$.\\
We define then
\[D_{\lambda,M,K} := \left\{(\tau,\xi,q)\in \R^2\times\lambda^{-1}\Z,~~|\xi| \sim M, \crochet{\sigma(\tau,\xi,q)}\sim K \right\}\]
and
\[D_{\lambda,M,\infeg K} := \left\{(\tau,\xi,q)\in \R^2\times \lambda^{-1}\Z,~~|\xi| \sim M, \crochet{\sigma(\tau,\xi,q)}\lesssim K \right\} = \bigcup_{K'\infeg K} D_{\lambda,M,K'}\]
We note also
\[\mathfrak{I}_M := \left\{M/2\infeg |\xi|\infeg 3M/2\right\}\]
and
\[\mathfrak{I}_{\infeg M} := \left\{|\xi|\infeg 3M/2\right\} = \bigcup_{M'\infeg M}\mathfrak{I}_{M'}\]
\item We use the notations $M_1\et M_2 := \min(M_1,M_2)$ and $M_1 \ou M_2 := \max(M_1,M_2)$.\\
For $M_1,M_2,M_3 \in \R_+^*$, $M_{\min}\infeg M_{med}\infeg M_{max}$ denotes the increasing rearrangement of $M_1,M_2,M_3$, i.e 
\begin{multline*}
M_{min} := M_1\et M_2 \et M_3,~~ M_{max} = M_1 \ou M_2 \ou M_3 \\ \text{ and } M_{med} = M_1+M_2+M_3 - M_{max} - M_{min}
\end{multline*}
\item We use two different Littlewood-Paley decompositions : the first one is homogeneous (on $2^{\Z}$) for $|\xi|$, the last one is inhomogeneous for $\crochet{\sigma} \in 2^{\N}$.

Let $\chi\in\mathcal{C}^{\infty}_0(\R)$ with $0\infeg \chi \infeg 1$, $\supp \chi \subset [-8/5;8/5]$ and $\chi \equiv 1$ on $[-5/4;5/4]$.
\begin{itemize}
\item For $M\in 2^{\Z}$, we then define $\eta_M(\xi) := \chi(\xi/M) - \chi(2\xi/M)$, such that $\supp \eta_M \subset \{5/8 M \infeg |\xi|\infeg 8/5 M\}$ and $\eta_M \equiv 1$ on $\{4/5 M\infeg |\xi|\infeg 5/4 M\}$. Thus $\xi \in \supp \eta_M \Rightarrow \xi \in \mathfrak{I}_M$ and $|\xi| \sim M$.
\item For $K\in 2^{\N}$, we also define $\rho_1(\sigma) := \chi(\sigma)$ and $\rho_K(\sigma) := \chi(\sigma/K)-\chi(2\sigma/K)$, $K>1$, such that $\supp \rho_K \subset \{5/8 K \infeg |\sigma|\infeg 8/5 K\}$ and $\rho_K \equiv 1$ on $\{4/5 M\infeg |\sigma|\infeg 5/4 K\}$, $K>1$. Thus $\sigma \in \supp \rho_K \Rightarrow \crochet{\sigma} \sim K$.
\item When needed, we may use other decompositions $\widetilde{\chi}$, $\widetilde{\eta}$ and $\widetilde{\rho}$ with the similar properties as $\chi$, $\eta$, $\rho$ and satisfying $\widetilde{\chi}\equiv 1$ on $\supp \chi$, $\widetilde{\eta}\equiv 1$ on $\supp\eta$ and $\widetilde{\rho}\equiv 1$ on $\supp\rho$.
\item Finally, for $\kappa\in\R_+^*$, we note $\chi_{\kappa}(x) := \chi(x/\kappa)$.
\end{itemize}
\item We also define the Littlewood-Paley projectors associated with the previous decompositions :
\[P_M u := \F^{-1}\left(\eta_M(\xi)\widehat{u}\right) \text{ and }P_{\infeg M} u := \sum_{M'\infeg M}P_M u = \F^{-1}\left(\chi_M(\xi)\widehat{u}\right)\]
Moreover, we define
\[P_{Low} := P_{\infeg 2^{-5}} \text{ and }P_{High}:=1-P_{Low}\]
\item The energy space $\E_{\lambda}$ is defined as in (\ref{definition espace donnée initiale}) for any period $2\pi\lambda$ :
\[\E(\R\times\T_{\lambda}) := \left\{u_0\in L^2(\R\times\T_{\lambda}),~~\drx u_0\in L^2(\R\times\Tl),~\drx^{-1}\dry u_0 \in L^2(\R\times\Tl)\right\}\]
It is endowed with the norm
\[\norme{\El}{u_0}:=\normL{2}{\crochet{\xi}\cdot p(\xi,q)\cdot\widehat{u_0}}\]
i.e $\El$ is a weighted Sobolev space, with the weight defined as
\begin{equation}\label{definition p}
p(\xi,q):= \crochet{\crochet{\xi}^{-1}q/\xi},~(\xi,q)\in\R\times\Zl
\end{equation}
so that
\begin{equation}\label{propriete p}
\left|\crochet{\xi}\cdot p(\xi,q)\right|^2 = 1+\xi^2 +  \frac{q^2}{\xi^2}
\end{equation}
i.e
\[\norme{\El}{u_0}^2 = \normL{2}{u_0}^2+\normL{2}{\drx u_0}^2 + \normL{2}{\drx^{-1}\dry u_0}^2\]
More generally, for $\alpha\in\N$, we define
\begin{equation}\label{definition E sigma}
\El^{\alpha} := \left\{u_0(x,y)\in L^2(\R\times\Tl),~~\norme{\El^{\alpha}}{u_0}:= \normL{2}{\crochet{\xi}^{\alpha}\cdot p(\xi,q)\cdot \widehat{u_0}}<+\infty\right\}
\end{equation}
and
\begin{equation}\label{definition E infini}
\El^{\infty} = \bigcap_{\alpha\in\N^*} \El^{\alpha}
\end{equation}
\item For a real $\xi$, we define 
\[\lpart{\xi} := \lambda^{-1}\lfloor \lambda\xi\rfloor \in \Zl\]
\item For a set $A\subset\R^d$, $\mathbb{1}_A$ is the characteristic function of $A$ and if $A$ is Lebesgue-measurable, $|A|$ means its measure. Similarly, if $A \subset \Zl$, its measure with respect to $\dq$ will also be noted $|A|$.
When $A\subset \Z$ is a finite set, its cardinal is denoted $\#A$.
\item For $M>0$ and $s\in\R$, $\lesssim M^{s-}$ means $\infeg C_{\varepsilon}M^{s-\varepsilon}$ for any choice of $\varepsilon>0$ small enough. We define similarly $M^{s+}$.
\end{itemize}
\section{Functions spaces}\label{section espaces fonctions}
\subsection{Definitions}
Let $M\in 2^{\Z}$.\\
First, the dyadic energy space is defined as
\[\el{M} := \left\{u_0\in \El^0,~P_M u_0 = u_0\right\}\]
As in~\cite{IonescuKenigTataru2008}, for $M\in 2^{\Z}$ and $b_1\in[0;1/2[$, the dyadic Bourgain type space is defined as
\begin{multline*}
\X{M}^{b_1} :=\left\{f(\tau,\xi,q)\in L^2(\R^2\times\Zl), \supp f \subset \R \times I_M \times \Zl, \right. \\ \left.\norme{\X{M}^{b_1}}{f}:= \sum_{K\supeg 1}K^{1/2}\beta_{M,K}^{b_1}\normL{2}{\rho_K(\tau-\omega)f}<+\infty\right\}
\end{multline*}
where the extra weight $\beta_{M,K}$ is
\[\beta_{M,K} := 1\ou \frac{K}{(1\ou M)^3}\]
This weight, already used in \cite{bourgain1993kp,MST2011,GPWW}, allows to recover a bit of derivatives in the high modulation regime, thus preventing a logarithmic divergence in the energy estimate.
Then, we use the $\X{M}^{b_1}$ structure uniformly on time intervals of size $(1\ou M)^{-1}$ :
\begin{multline*}
\fl{M}^{b_1} := \left\{u(t,x,y)\in \mathcal{C}\left(\R,\el{M}\right),~P_M u = u,\right.\\ \left.\norme{\fl{M}^{b_1}}{u}:= \sup_{t_M \in \R} \norme{\X{M}^{b_1}}{p\cdot \F\left\{\chi_{(1\ou M)^{-1}}(t-t_M)u\right\}}<+\infty\right\}
\end{multline*}
and
\begin{multline*}
\nl{M}^{b_1} := \left\{u(t,x,y)\in \mathcal{C}\left(\R,\el{M}\right),~P_M u = u,\right.\\ \left. \norme{\nl{M}^{b_1}}{u}:= \sup_{t_M \in \R} \norme{\X{M}^{b_1}}{(\tau-\omega +i(1\ou M))^{-1}p\cdot \F\left\{\chi_{(1\ou M)^{-1}}(t-t_M)u\right\}}<+\infty\right\}
\end{multline*}
For a function space $Y\hookrightarrow \mathcal{C}(\R,\El^{\alpha})$, we set
\[Y(T) := \left\{u\in\mathcal{C}\left([-T,T],\El^{\alpha}\right),~\norme{Y(T)}{u}<+\infty\right\} \]
endowed with
\begin{equation}\label{definition norme localisation T}
\norme{Y(T)}{u}:=\inf\left\{\norme{Y}{\widetilde{u}},~~\widetilde{u}\in Y,~~\widetilde{u}\equiv u \text{ on }[-T,T]\right\}
\end{equation}
Finally, the main function spaces are defined as
\begin{multline}\label{definition F}
\Fl^{\alpha,b_1}(T) := \left\{u\in\mathcal{C}([-T,T],\El^{\alpha}),\right. \\ \left.\norme{\Fl^{\alpha,b_1}(T)}{u}:= \left(\sum_{M>0} (1\ou M)^{2\alpha} \norme{\fl{M}^{b_1}(T)}{P_M u}^2\right)^{1/2}<+\infty\right\}
\end{multline}
and
\begin{multline}\label{definition N}
\Nl^{\alpha,b_1}(T) := \left\{u\in\mathcal{C}([-T,T],\El^{\alpha}),\right. \\ \left.\norme{\Nl^{\alpha,b_1}(T)}{u}:= \left(\sum_{M>0} (1\ou M)^{2\alpha} \norme{\nl{M}^{b_1}(T)}{P_M u}^2\right)^{1/2}<+\infty\right\}
\end{multline}
The last space is the energy space
\begin{multline}\label{definition espace energie}
\Bl^{\alpha}(T) := \left\{u\in \mathcal{C}([-T,T],\El^{\alpha}),\right. \\ \left.\norme{\Bl^{\alpha}(T)}{u}:=\left(\norme{\El^{\alpha}}{P_{\infeg 1}u_0}^2+\sum_{M> 1}\sup_{t_M\in [-T,T]}\norme{\El^{\alpha}}{P_M u(t_M)}^2\right)^{1/2}<+\infty\right\} 
\end{multline}
For $b_1=1/8$, we drop the exponent.\\
If moreover $\alpha = 1$, we simply write $\Fl(T)$, $\Nl(T)$ et $\Bl(T)$.\\

We define similarly the spaces
\[\overline{\el{M}},~\overline{\fl{M}^{b_1}},~\overline{\nl{M}^{b_1}}\]
which are the equivalents of $\el{M}$, $\fl{M}^{b_1}$, $\nl{M}^{b_1}$ but on an $L^2$ level, i.e without the weight $p(\xi,q)$. In particular,
\begin{equation}\label{equation decomposition norme F}
\norme{\Fl(T)}{u}^2\sim \sum_{M>0}(1\ou M)^2\norme{\overline{\fl{M}^{b_1}}(T)}{u}^2+\norme{\overline{\fl{M}^{b_1}}(T)}{\drx^{-1}\dry u}^2
\end{equation}
For the difference equation, we will then use the $L^2$-type energy space
\begin{multline}\label{definition espace energie difference}
\overline{\Bl}(T) := \left\{u\in \mathcal{C}([-T;T],L^2(\R\times\Tl)),\right.\\ 
\left.\norme{\overline{\Bl}(T)}{u}^2:=\norme{L^2}{P_{\infeg 1}u_0}+\sum_{M> 1}\sup_{t_M\in[-T;T]}\norme{L^2}{P_Mu(t_M)}^2<+\infty\right\}
\end{multline}
and the spaces for the difference of solutions and for the nonlinearity are
\begin{multline}\label{definition F difference}
\overline{\Fl}^{b_1}(T) := \left\{u\in\mathcal{C}([-T;T],L^2(\R\times\Tl)),\right.\\
\left.\norme{\overline{\Fl}^{b_1}(T)}{u}^2:=\sum_{M>0} \norme{\overline{\fl{M}^{b_1}}(T)}{P_M u}^2 <+\infty \right\}
\end{multline}
and
\begin{multline}\label{definition N difference}
\overline{\Nl}^{b_1}(T) := \left\{u\in\mathcal{C}([-T;T],L^2(\R\times\Tl)),\right.\\
\left.\norme{\overline{\Nl}^{b_1}(T)}{u}^2:=\sum_{M>0} \norme{\overline{\nl{M}^{b_1}}(T)}{P_M u}^2 <+\infty \right\}
\end{multline}
\subsection{Basic properties}
The following property of dyadic Bourgain type space is fundamental :
\begin{proposition}
Let $M\in 2^{\Z}$, $b_1\in [0;1/2[$, $f_M \in \X{M}^{b_1}$, and $\gamma\in L^2(\R)$ satisfying 
\begin{equation}\label{hypothese localisation en temps X}
|\widehat{\gamma(\tau)}|\lesssim \crochet{\tau}^{-4}
\end{equation}
Then, for any $K_0 \supeg 1$ and $t_0\in\R$ :
\begin{equation}\label{propriete XM}
K_0^{1/2}\beta_{M,K_0}^{b_1}\normL{2}{\chi_{K_0}(\tau-\omega)\F\left\{\gamma(K_0(t-t_0))\F^{-1}f_M\right\}}\lesssim \beta_{M,K_0}^{b_1}\norme{\X{M}^{0}}{f_M}
\end{equation}
and
\begin{equation}\label{estimation cle localisation XM}
\sum_{K\supeg K_0}K^{1/2}\beta_{M,K}^{b_1}\normL{2}{\rho_K(\tau-\omega)\F\left\{\gamma(K_0(t-t_0))\F^{-1}f_M\right\}}\lesssim \norme{\X{M}^{b_1}}{f_M}
\end{equation}
and the implicit constants are independent of $K_0$, $t_0$, $M$ or $\lambda$.
\end{proposition}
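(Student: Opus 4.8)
The proof follows the pattern of the analogous statement in \cite{IonescuKenigTataru2008}, the genuinely new point being the bookkeeping of the weight $\beta_{M,K}$. The plan is to pass to the Fourier side, where multiplication by $\gamma(K_0(t-t_0))$ becomes a convolution in $\tau$ alone, and to reduce both inequalities to weighted estimates for this convolution acting on functions localized in the modulation variable $\sigma=\tau-\omega$. Set $g:=\F\{\gamma(K_0(t-t_0))\F^{-1}f_M\}$; since multiplication by a function of $t$ only corresponds, after the space–time Fourier transform, to convolution in $\tau$ only, we have $g(\tau,\xi,q)=\bigl(h_{K_0}\star_\tau f_M(\cdot,\xi,q)\bigr)(\tau)$ with $h_{K_0}:=\F_t\{\gamma(K_0(\cdot-t_0))\}$. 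The hypothesis $|\widehat{\gamma}|\lesssim\crochet{\tau}^{-4}$ gives $|h_{K_0}(\tau)|\lesssim K_0^{-1}\crochet{\tau/K_0}^{-4}\sim K_0^{3}(K_0+|\tau|)^{-4}$, hence $\normL{1}{h_{K_0}}\lesssim 1$ and $\normL{2}{h_{K_0}}\lesssim K_0^{-1/2}$, all uniformly in $t_0$ (the factor $\e^{-i\tau t_0}$ is unimodular). With $(\xi,q)$ frozen, $\sigma$ differs from $\tau$ by a constant, so convolution in $\tau$ is convolution in $\sigma$; all the estimates below are obtained pointwise in $(\xi,q)$ and then integrated in $\xi$ and summed in $q$.

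I decompose $f_M=\sum_{K'\supeg 1}f_{M,K'}$, $f_{M,K'}:=\rho_{K'}(\sigma)f_M$, and for dyadic $K,K'\supeg 1$ introduce the transfer coefficient
\[A(K,K'):=\sup\Bigl\{\normL{2}{\rho_K(\sigma)\,(h_{K_0}\star_\sigma F)}\ :\ \supp F\subset\{|\sigma|\sim K'\},\ \normL{2}{F}\infeg 1\Bigr\}\]
and, similarly, $A_\chi(K')$ with $\rho_K(\sigma)$ replaced by $\chi_{K_0}(\sigma)$. Young's inequality with $\normL{1}{h_{K_0}}$, and alternatively with $\normL{2}{h_{K_0}}$ together with the fact that an $L^2$ function supported in an interval of length $\sim K'$ has $L^1$ norm $\lesssim(K')^{1/2}$ times its $L^2$ norm, give $A(K,K')\lesssim\min(1,\sqrt{K'/K_0})$ and $A_\chi(K')\lesssim\min(1,\sqrt{K'/K_0})$. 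On the other hand, when $K$ exceeds both $K'$ and $K_0$ the convolution forces $|\sigma-\sigma'|\sim K$, so $h_{K_0}$ is evaluated in its tail, $|h_{K_0}(\sigma-\sigma')|\lesssim K_0^{3}K^{-4}$; Schur's test (bounding the $L^1_\sigma$ and $L^1_{\sigma'}$ norms of the truncated kernel) then gives $A(K,K')\lesssim K_0^{3}\sqrt{K'}\,K^{-7/2}$, and likewise $A_\chi(K')\lesssim(K_0/K')^{7/2}$ whenever $K'\gg K_0$.

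It remains to sum. For (\ref{propriete XM}), since $\chi_{K_0}(\sigma)$ is supported in $\{|\sigma|\lesssim K_0\}$,
\[K_0^{1/2}\normL{2}{\chi_{K_0}(\sigma)g}\infeg K_0^{1/2}\sum_{K'\supeg 1}A_\chi(K')\,\normL{2}{f_{M,K'}}\,,\]
and using $A_\chi(K')\lesssim\sqrt{K'/K_0}$ for $K'\lesssim K_0$ (so $K_0^{1/2}A_\chi(K')\lesssim(K')^{1/2}$) and $A_\chi(K')\lesssim(K_0/K')^{7/2}$ for $K'\gg K_0$ (so $K_0^{1/2}A_\chi(K')\lesssim(K')^{1/2}(K_0/K')^{4}$), the right–hand side is $\lesssim\sum_{K'}(K')^{1/2}\normL{2}{f_{M,K'}}=\norme{\X{M}^{0}}{f_M}$; multiplying by $\beta_{M,K_0}^{b_1}$ gives (\ref{propriete XM}). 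For (\ref{estimation cle localisation XM}) I exchange the order of summation, reducing to the bound, for every fixed $K'$,
\[\sum_{K\supeg K_0}K^{1/2}\beta_{M,K}^{b_1}\,A(K,K')\lesssim(K')^{1/2}\beta_{M,K'}^{b_1}\,,\]
which I obtain by splitting the $K$–sum at $K\sim K'$: for $K\lesssim K'$ the monotonicity of $K\mapsto\beta_{M,K}$ lets one pull out $\beta_{M,K}^{b_1}\infeg\beta_{M,K'}^{b_1}$ while $\sum_{K_0\infeg K\lesssim K'}K^{1/2}A(K,K')\lesssim\sum_{K\lesssim K'}K^{1/2}\lesssim(K')^{1/2}$; for $K\gg K'\ou K_0$ the tail bound gives $K^{1/2}\beta_{M,K}^{b_1}A(K,K')\lesssim\beta_{M,K}^{b_1}K_0^{3}\sqrt{K'}\,K^{-3}$, which, since $\beta_{M,K}$ equals $1$ for $K\infeg(1\ou M)^3$ and grows only like $(K/(1\ou M)^3)^{b_1}$ with $b_1<1/2$ afterwards, sums to $\lesssim(K')^{1/2}\beta_{M,K'}^{b_1}$; and the finitely many transition terms $K\sim K_0\gg K'$ are absorbed using $A(K,K')\lesssim\sqrt{K'/K_0}$ and $\beta_{M,K_0}=1$.

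The single delicate point — and the reason both the weight $\beta_{M,K}$ and the restriction $b_1<1/2$ are present — is this last bookkeeping: the time localization, of scale $1/K_0$, smears the low–modulation content of $f_M$ (at modulation scales $K'\infeg K_0$) up to modulation $\sim K_0$, and, through the tails of $h_{K_0}$, up to modulation $\sim K_0^{3}$ before its polynomial decay takes over, so one must compare $\beta_{M,K}^{b_1}$ for $K$ ranging roughly over $[K_0,K_0^{3}]$ with $\beta_{M,K'}^{b_1}$ for $K'\infeg K_0$. This closes because $\beta_{M,\cdot}$ is non–decreasing and identically $1$ on $[1,(1\ou M)^3]$ — so $\beta_{M,K_0}=1$ for the time scales $K_0\lesssim(1\ou M)^3$ occurring in the applications (notably $K_0=1\ou M$) — and because $b_1<1/2$ renders the remaining off–diagonal contributions geometric; the pointwise decay $\crochet{\tau}^{-4}$ of $\widehat\gamma$, rather than merely $\gamma\in L^2$, is exactly what supplies that kernel decay. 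All remaining steps are routine summations.
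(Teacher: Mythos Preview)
Your proof is correct and follows essentially the same route as the paper: pass to the Fourier side so that the time cutoff becomes a $\tau$-convolution with a kernel of size $K_0^{-1}\crochet{\tau/K_0}^{-4}$, decompose $f_M$ dyadically in modulation, use Young's inequality for the near-diagonal pieces and the polynomial kernel decay for the far-off-diagonal ones. Your transfer-coefficient bookkeeping is a slightly more systematic repackaging of the paper's split into $K_1\lesssim K$ versus $K_1\gtrsim K$, and you correctly make explicit the point (which the paper's proof leaves implicit and only comments on in the subsequent remark) that the $\beta_{M,K}^{b_1}$ accounting for (\ref{estimation cle localisation XM}) in the regime $K'<K_0$ needs $\beta_{M,K_0}=1$, i.e.\ $K_0\lesssim(1\ou M)^3$, which is satisfied in every application.
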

We will have several uses of the following estimate
\begin{lemme}
For any $M\in2^{\Z}$ and $f_M\in \X{M}^{0}$, we have
\begin{equation}\label{estimation controle norme L2L1}
\norme{L^2_{\xi,q}L^1_{\tau}}{f_M}\lesssim \norme{\X{M}^{0}}{f_M}
\end{equation}
\end{lemme}
\begin{proof}
We decompose $f_M$ according to its modulations :
\begin{multline*}
\norme{L^2_{\xi,q}L^1_{\tau}}{f_M} \infeg \sum_{K\supeg 1} \norme{L^2_{\xi,q}L^1_{\tau}}{\rho_K(\tau-\omega)f_M}\\
\lesssim \sum_{K\supeg 1} K^{1/2}\norme{L^2_{\xi,q}L^1_{\tau}}{\widetilde{\rho_K}(\tau-\omega)\crochet{\tau-\omega}^{-1/2}\cdot\rho_K(\tau-\omega)f_M}
\end{multline*}
Next, using Cauchy-Schwarz inequality in the $\tau$ variable, we control the previous term with
\[\sum_{K\supeg 1} K^{1/2}\norme{L^2_{\xi,q}}{\norme{L^{2}_{\tau}}{\widetilde{\rho_K}(\tau-\omega)\crochet{\tau-\omega}^{-1/2}}\norme{L^2_{\tau}}{\rho_K(\tau-\omega)f_M}}\]
Now, since for any fixed $(\xi,q)\in\R\times\Zl$, ${\displaystyle \norme{L^{2}_{\tau}}{\widetilde{\rho_K}(\tau-\omega)\crochet{\tau-\omega}^{-1/2}}\lesssim 1}$, the sum above is finally estimated by
\[\sum_{K\supeg 1} K^{1/2}\norme{L^2_{\xi,q,\tau}}{\rho_K(\tau-\omega)f_M}=\norme{\X{M}^{0}}{f_M}\]
\end{proof}
Now we prove the proposition.
\begin{proof}
Let us begin by proving (\ref{propriete XM}). Using that ${\displaystyle \normL{2}{\chi_{K_0}(\tau-\omega)}\lesssim K_0^{1/2}}$, we estimate the term on the left-hand side by
\begin{multline*}
K_0^{1/2}\beta_{M,K_0}^{b_1}\norme{L^2_{\xi,q}}{\norme{L^{2}_{\tau}}{\chi_{K_0}(\tau-\omega)}\norme{L^{\infty}_{\tau}}{\left(K_0^{-1}\e^{i\tau' t_0}\widehat{\gamma}(K_0^{-1}\tau')\right)\star_{\tau}f_M}}\\
\lesssim \beta_{M,K_0}^{b_1}\norme{L^2_{\xi,q}}{\norme{L^{\infty}_{\tau}}{\left(\e^{i\tau' t_0}\widehat{\gamma}(K_0^{-1}\tau')\right)\star_{\tau}f_M}}
\end{multline*}
(\ref{propriete XM}) then follows from using Young's inequality $L^{\infty}\times L^{1}\rightarrow L^{\infty}$ and (\ref{estimation controle norme L2L1}), since $\widehat{\gamma}\in L^{\infty}$ by the assumption (\ref{hypothese localisation en temps X}).

Now we prove (\ref{estimation cle localisation XM}). We decompose $f_M$ according to its modulations and then distinguish two cases depending on the relation between $K$ and $K_1$ :
\begin{multline*}
\sum_{K\supeg K_0}K^{1/2}\beta_{M,K}^{b_1}\normL{2}{\rho_K(\tau-\omega)\F\left\{\gamma(K_0(t-t_0))\F^{-1}f_M\right\}} \\
\infeg \sum_{K\supeg K_0}K^{1/2}\beta_{M,K}^{b_1}\sum_{K_1\supeg 1}\normL{2}{\rho_K(\tau-\omega)\left(\e^{i\tau' t_0}\widehat{\gamma_{K_0^{-1}}}\right)\star_{\tau}\left(\rho_{K_1}(\tau-\omega) f_M\right)}
\\= \sum_{K\supeg K_0}\sum_{K_1\infeg K/10}\left(\right)+ \sum_{K\supeg K_0}\sum_{K_1\gtrsim K}\left(\right)= I+II
\end{multline*}
For the first term, we have $|\tau-\tau'|\sim K$ since $|\tau-\omega|\sim K$ and $|\tau'-\omega|\sim K_1 \infeg K/10$, thus using Young inequality $L^{\infty}\times L^1 \rightarrow L^{\infty}$, the estimate ${\displaystyle \normL{2}{\rho_K}\lesssim K^{1/2}}$ and then summing on $K\supeg K_0$, we get the bound
\begin{multline*}
I \lesssim \sum_{K\supeg K_0}K^{-1}\beta_{M,K}^{b_1}\sum_{K_1\infeg K/10}\normL{2}{\rho_K(\tau-\omega)\left(|\tau'|^{3/2}\widehat{\gamma_{K_0^{-1}}}\right)\star_{\tau}\left(\rho_{K_1}(\tau-\omega) f_M\right)}\\
\lesssim K_0^{-1/2}\sum_{K_1\infeg K/10}\normL{\infty}{|\tau'|^{3/2}\widehat{\gamma_{K_0^{-1}}}(\tau')}\norme{L^2_{\xi,q}L^1_{\tau}}{\rho_{K_1}(\tau-\omega) f_M}
\end{multline*}
This is enough for (\ref{estimation cle localisation XM}) after using (\ref{estimation controle norme L2L1}) and
\begin{equation}\label{estimation norme Lp localisation}
\normL{p}{|\cdot|^s\widehat{\gamma_{K_0^{-1}}}}\lesssim K_0^{s+1/p-1}\normL{p}{|\cdot|^s\widehat{\gamma}}
\end{equation}
and the right-hand side is finite by the assumption on gamma (\ref{hypothese localisation en temps X}).

Finally, $II$ is simply controlled using Young $L^{1}\times L^2 \rightarrow L^2$ and (\ref{estimation norme Lp localisation}) :
\[II \lesssim \sum_{K_1\gtrsim K_0} K_1^{1/2}\beta_{M,K_1}^{b_1}\normL{1}{\widehat{\gamma_{K_0^{-1}}}}\normL{2}{\rho_{K_1}(\tau-\omega)f_M}\lesssim \norme{\X{M}^{b_1}}{f_M}\]
\end{proof}
\begin{remarque}
For the loss in (\ref{propriete XM}) to be nontrivial, we need either $b_1=0$ or $K_0\lesssim (1\ou M)^3$. In particular, in the multilinear estimates we cannot localize the term with the smallest frequency on time intervals of size $M_{max}^{-1}$ when $b_1>0$.
\end{remarque}
The next proposition deals with general time multipliers as in \cite{IonescuKenigTataru2008} :
\begin{proposition}
Let $M>0$, $b_1\in[0;1/2[$, $f_M\in \fl{M}^{b_1}$ (repectively $\nl{M}^{b_1}$) and $m_M \in\mathcal{C}^4(\R)$ bounded along with its derivatives. Then
\begin{equation}\label{estimation multiplicateur F}
\norme{\fl{M}^{b_1}}{m_M(t)f_M}\lesssim \left(\sum_{k=0}^{4}(1\ou M)^{-k}\normL{\infty}{m_M^{(k)}}\right)\norme{\fl{M}^{b_1}}{f_M}
\end{equation}
and
\begin{equation}\label{estimation multiplicateur N}
\norme{\nl{M}^{b_1}}{m_M(t)f_M}\lesssim \left(\sum_{k=0}^{4}(1\ou M)^{-k}\normL{\infty}{m_M^{(k)}}\right)\norme{\nl{M}^{b_1}}{f_M}
\end{equation}
respectively, uniformly in $M>0$ and $\lambda\supeg 1$.
\end{proposition}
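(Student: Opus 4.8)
The plan is to follow the scheme of \cite{IonescuKenigTataru2008} and reduce both bounds to the estimates (\ref{propriete XM}) and (\ref{estimation cle localisation XM}) established above. Write $N:=1\ou M$, recall that the cutoff in the definition of $\fl{M}^{b_1}$ is $\chi_{N^{-1}}(t-t_M)=\chi(N(t-t_M))$, and set $C_m:=\sum_{k=0}^{4}N^{-k}\normL{\infty}{m_M^{(k)}}$, which is exactly the prefactor on the right of (\ref{estimation multiplicateur F})--(\ref{estimation multiplicateur N}). Fix $\phi\in\mathcal{C}^{\infty}_0(\R)$ with $\supp\phi\subset[-1,1]$ and $\sum_{n\in\Z}\phi(\cdot-n)\equiv1$, and let $t_M\in\R$. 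Putting $t_M^{(n)}:=t_M+nN^{-1}$, using that $\chi\equiv1$ on $[-5/4,5/4]\supset\supp\phi$ and that only the indices $|n|\infeg2$ meet the support of $\chi_{N^{-1}}(\cdot-t_M)$, I would first establish the pointwise identity
\[
\chi_{N^{-1}}(t-t_M)\,m_M(t)\,u=\sum_{|n|\infeg2}\Theta_n\bigl(N(t-t_M^{(n)})\bigr)\,\chi_{N^{-1}}(t-t_M^{(n)})\,u ,
\]
where $\Theta_n(s):=\chi(s+n)\phi(s)\,m_M(t_M+(s+n)N^{-1})$. Each $\Theta_n$ is $\mathcal{C}^4$, supported in $[-1,1]$, and Leibniz' rule together with the chain rule give $\normL{\infty}{\Theta_n^{(k)}}\lesssim C_m$ for $0\infeg k\infeg4$ uniformly in $|n|\infeg2$, hence $\sup_\tau\crochet{\tau}^{4}|\widehat{\Theta_n}(\tau)|\lesssim C_m$. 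Since the weight $p(\xi,q)$ is a multiplier in $(\xi,q)$ only, it commutes with multiplication by functions of $t$ and with convolution in $\tau$, so that
\[
p\cdot\F\bigl\{\chi_{N^{-1}}(t-t_M)\,m_M(t)\,u\bigr\}=\sum_{|n|\infeg2}\widehat{\Theta_n(N(\cdot-t_M^{(n)}))}\star_\tau h_n ,
\]
where $h_n:=p\cdot\F\{\chi_{N^{-1}}(t-t_M^{(n)})u\}$ satisfies $\norme{\X{M}^{b_1}}{h_n}\infeg\norme{\fl{M}^{b_1}}{u}$ by definition of the norm.

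The heart of the matter is then the following claim: \emph{if $\mu\in L^2(\R)$ satisfies $|\widehat{\mu}(\tau)|\infeg\crochet{\tau}^{-4}$, then $\norme{\X{M}^{b_1}}{\widehat{\mu(N(\cdot-t_0))}\star_\tau h}\lesssim\norme{\X{M}^{b_1}}{h}$ for all $t_0\in\R$ and $h\in\X{M}^{b_1}$, uniformly in $M$, $t_0$, $\lambda$.} Granting the claim and applying it with $\mu=\Theta_n/\sup_\tau\crochet{\tau}^4|\widehat{\Theta_n}(\tau)|$ to each of the at most five terms above, then summing and taking the supremum over $t_M$, yields (\ref{estimation multiplicateur F}). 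To prove the claim I would set $h':=\widehat{\mu(N(\cdot-t_0))}\star_\tau h=\F\{\mu(N(t-t_0))\F^{-1}h\}$ and split the $\X{M}^{b_1}$-norm at the localisation scale $K=N$. For the modulations $K\supeg N$ one recognises precisely the left-hand side of (\ref{estimation cle localisation XM}) with $K_0=N$, which is $\lesssim\norme{\X{M}^{b_1}}{h}$. For the range $K<N$ --- nonempty only when $M>1$, in which case $N=M$ and $\beta_{M,K}=1$ since $K<M<M^3$ --- one uses Cauchy--Schwarz, $\sum_{1\infeg K<N}K\lesssim N$, and the bounded overlap of the $\rho_K$ to get
\[
\sum_{1\infeg K<N}K^{1/2}\beta_{M,K}^{b_1}\normL{2}{\rho_K(\tau-\omega)h'}\lesssim N^{1/2}\normL{2}{\chi_{2N}(\tau-\omega)\,h'} ,
\]
after which (\ref{propriete XM}) applied with $K_0=2N$ gives $\normL{2}{\chi_{2N}(\tau-\omega)h'}\lesssim N^{-1/2}\norme{\X{M}^{0}}{h}\infeg N^{-1/2}\norme{\X{M}^{b_1}}{h}$ (the common factor $\beta_{M,2N}^{b_1}$ cancels and $\norme{\X{M}^{0}}{\cdot}\infeg\norme{\X{M}^{b_1}}{\cdot}$ because $\beta_{M,K}\supeg1$). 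Adding the two ranges proves the claim.

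The estimate (\ref{estimation multiplicateur N}) then follows from the very same decomposition, the only extra ingredient being the resolvent factor $(\tau-\omega+iN)^{-1}$ built into the $\nl{M}^{b_1}$-norm. One commutes it through each $\tau$-convolution by writing $(\tau-\omega+iN)^{-1}=\tfrac{\tau'-\omega+iN}{\tau-\omega+iN}(\tau'-\omega+iN)^{-1}$; using the pointwise bound $\bigl|\tfrac{\tau'-\omega+iN}{\tau-\omega+iN}\bigr|\lesssim\crochet{(\tau-\tau')/N}$, the problem reduces to convolving $(\tau'-\omega+iN)^{-1}h_n$ --- whose $\X{M}^{b_1}$-norm is $\infeg\norme{\nl{M}^{b_1}}{u}$ --- against a kernel of the same form as $\widehat{\Theta_n(N(\cdot))}$ but losing a single power of $\crochet{\cdot}$ in the decay. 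This is harmless: the proofs of (\ref{propriete XM})--(\ref{estimation cle localisation XM}) use only that $\widehat{\gamma}$ is integrable and that $|\cdot|^{3/2}\widehat{\gamma}$ is bounded, so the claim --- and hence the whole argument --- survives with $\crochet{\tau}^{-3}$ in place of $\crochet{\tau}^{-4}$.

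The step I expect to be the main obstacle is the range $K<N$ in the claim, i.e. controlling the modulations lying below the localisation scale $N$. Since $N$ may be much larger than $1$, the function $\mu(N(\cdot-t_0))$ cannot be written as $\gamma(\cdot-t_0)$ with $\widehat{\gamma}$ of size $\crochet{\tau}^{-4}$, so (\ref{estimation cle localisation XM}) is of no use there and one is forced to invoke (\ref{propriete XM}) together with the gain furnished by the weight $\beta_{M,K}$ --- this is exactly the point where the restriction $K_0\lesssim(1\ou M)^3$ from the remark following the previous proposition is used. The bookkeeping of the derivative loss in the $\nl{M}^{b_1}$ case is a secondary but genuine technical nuisance.
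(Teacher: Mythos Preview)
Your proof is correct and takes essentially the same approach as the paper, namely reducing everything to (\ref{propriete XM})--(\ref{estimation cle localisation XM}) at scale $K_0=1\ou M$. The paper is a bit more direct --- it skips the partition of unity by noting that $\chi_{N^{-1}}(t-t_M)m_M(t)$ is already supported in an interval of length $\sim N^{-1}$, hence its Fourier transform satisfies $|\F\{\chi_{N^{-1}}(t-t_M)m_M\}(\tau)|\lesssim C_m\,N^{-1}\crochet{N^{-1}\tau}^{-4}$ (via the $L^1$ bound and four integrations by parts), so a single application of (\ref{propriete XM})--(\ref{estimation cle localisation XM}) with $\gamma=\F^{-1}\crochet{\cdot}^{-4}$ finishes; for (\ref{estimation multiplicateur N}) the paper just says ``similarly'', and your resolvent-commutation argument via $\bigl|\tfrac{\tau'-\omega+iN}{\tau-\omega+iN}\bigr|\lesssim\crochet{(\tau-\tau')/N}$ makes that step explicit.
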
 
\begin{proof}
Using the definition of $\fl{M}^{b_1}$, we write
\[\norme{\fl{M}^{b_1}}{m_Mf_M} = \sup_{t_M\in\R} \sum_{K\supeg 1}K^{1/2}\beta_{M,K}^{b_1}\normL{2}{p\cdot\rho_K(\tau-\omega)\F\left\{\chi_{(1\ou M)^{-1}}(t-t_M)m_M(t)f_M\right\}}\]
Next we estimate
\[\left|\F\left\{\chi_{(1\ou M)^{-1}}(t-t_M)m_M\right\}\right|(\tau) \infeg \normL{1}{\chi_{(1\ou M)^{-1}}(t-t_M)m_M}\lesssim (1\ou M)^{-1}\normL{\infty}{m_M}\]
and
\begin{multline*}
\left|\F\left\{\chi_{(1\ou M)^{-1}}(t-t_M)m_M\right\}\right|(\tau) = |\tau|^{-4} \left|\F\frac{\mathrm{d}^4}{\dt^4}\left\{\chi_{(1\ou M)^{-1}}(t-t_M)m_M\right\}\right|\\
\lesssim |\tau|^{-4}\sum_{k=0}^4\normL{\infty}{m_M^{(k)}}(1\ou M)^{3-k}\normL{1}{\chi^{(4-k)}} 
\end{multline*}
Thus we obtain
\begin{multline*}
\left|\F\left\{\chi_{(1\ou M)^{-1}}(t-t_M)m_M\right\}\right|(\tau)\\\lesssim \left(\sum_{k=0}^4 (1\ou M)^{-k}\normL{\infty}{m_M^{(k)}}\right)(1\ou M)^{-1}\crochet{(1\ou M)^{-1}\tau}^{-4}
\end{multline*}
Using (\ref{propriete XM}) and (\ref{estimation cle localisation XM}) with $t_0=t_M$, $K_0 = (1\ou M)$ and $\gamma(t)=\F^{-1}\crochet{\tau}^{-4}$ concludes the proof of (\ref{estimation multiplicateur F}). The proof of (\ref{estimation multiplicateur N}) follows similarly.
\end{proof}

The last proposition justifies the use of $\Fl(T)$ as a resolution space :
\begin{proposition}
Let $\alpha\in \N^*$, $T\in ]0;1]$, $b_1\in [0;1/2[$ and $u\in \Fl^{\alpha,b_1}(T)$. Then
\begin{equation}\label{estimation injection continue}
\norme{L^{\infty}_T\El^{\alpha}}{u}\lesssim \norme{\Fl^{\alpha,b_1}(T)}{u}
\end{equation}
and
\begin{equation}\label{estimation injection continue difference}
\norme{L^{\infty}_TL^2_{xy}}{u}\lesssim \norme{\overline{\Fl}^{b_1}(T)}{u}
\end{equation}
\end{proposition}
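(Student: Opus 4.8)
The plan is to prove the two embedding estimates \eqref{estimation injection continue} and \eqref{estimation injection continue difference} by unwinding the definitions of the function spaces and using the key time-localization properties of the dyadic Bourgain type spaces already established above. The strategy is to first reduce to a single dyadic block $P_M u$, prove a uniform-in-time $\E^\alpha$ (resp. $L^2$) bound for it by an appropriate block, and then square-sum over $M$. The $\ell^2$ structure in the definitions \eqref{definition F}, \eqref{definition F difference} matches the $\ell^2$ structure of $\E^\alpha$ and $L^2$ (Plancherel in the Littlewood-Paley decomposition), so the square-summation step is essentially free once the dyadic bound is in hand.

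First I would fix $M>0$ and estimate $\|P_M u(t_0)\|_{\E^\alpha}$ uniformly in $t_0\in[-T,T]$. Choose an extension $\widetilde{u}\in\fl{M}^{b_1}$ of $P_M u$ realizing (up to a factor $2$) the infimum in \eqref{definition norme localisation T}. Pick a smooth cutoff $\psi$ with $\psi\equiv 1$ near $0$ and $\supp\psi\subset(-1,1)$, so that evaluating at time $t_0$ we may write $P_M u(t_0) = \psi\big((1\ou M)(t-t_0)\big)\widetilde{u}$ evaluated at $t=t_0$. Taking the spatial Fourier transform and using that for fixed $t_0$ the map $v\mapsto v(t_0)$ on functions of $t$ is controlled by $\|\F_t v\|_{L^1_\tau}$, I would bound
\[
\|\widehat{P_M u}(t_0)\|_{L^2_{\xi,q}} \lesssim \Big\| \int_\R \big| \F\{\chi_{(1\ou M)^{-1}}(t-t_0)\widetilde{u}\}(\tau,\xi,q)\big|\,\dtau \Big\|_{L^2_{\xi,q}} = \norme{L^2_{\xi,q}L^1_\tau}{\F\{\chi_{(1\ou M)^{-1}}(t-t_0)\widetilde{u}\}}.
\]
To restore the $\E^\alpha$ weight I would insert $\crochet{\xi}^\alpha p(\xi,q)$; since $P_M u$ is frequency localized to $|\xi|\sim M$, the factor $\crochet{\xi}^\alpha$ is comparable to $(1\ou M)^\alpha$, which is exactly the weight appearing in \eqref{definition F}, while $p(\xi,q)$ is carried inside the $\fl{M}^{b_1}$ norm by definition. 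Then \eqref{estimation controle norme L2L1} converts the $L^2_{\xi,q}L^1_\tau$ norm into $\norme{\X{M}^0}{\,\cdot\,}\lesssim\norme{\X{M}^{b_1}}{\,\cdot\,}$, and taking the supremum over $t_M=t_0$ in the definition of $\fl{M}^{b_1}$ gives $\sup_{t_0\in[-T,T]}(1\ou M)^\alpha\|P_M u(t_0)\|_{\E^\alpha}\lesssim (1\ou M)^\alpha\norme{\fl{M}^{b_1}(T)}{P_M u}$. The difference-equation estimate \eqref{estimation injection continue difference} is the same argument with $\alpha=0$ and the weight $p$ removed, using $\overline{\fl{M}^{b_1}}$ in place of $\fl{M}^{b_1}$.

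Finally I would square-sum over $M\in 2^\Z$: since $\{P_M u(t_0)\}_M$ are almost-orthogonal in $\E^\alpha$ (the $\eta_M$ have finitely overlapping supports), $\|u(t_0)\|_{\E^\alpha}^2\lesssim \sum_M (1\ou M)^{2\alpha}\|P_M u(t_0)\|_{\E^\alpha}^2$; taking the supremum over $t_0$ inside the sum (which only increases the right-hand side) and invoking the dyadic bound above yields $\|u(t_0)\|_{\E^\alpha}^2\lesssim \sum_M (1\ou M)^{2\alpha}\norme{\fl{M}^{b_1}(T)}{P_M u}^2 = \norme{\Fl^{\alpha,b_1}(T)}{u}^2$, uniformly in $t_0$, which is \eqref{estimation injection continue}; likewise for \eqref{estimation injection continue difference}. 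I expect the only genuinely delicate point to be the bookkeeping in passing from the localized-to-$[-T,T]$ norm to an actual extension and making sure the time cutoff $\chi_{(1\ou M)^{-1}}(t-t_0)$ centered at the evaluation point $t_0$ is admissible as a choice of $t_M$ — this is where the uniformity over $t_M$ built into the definition of $\fl{M}^{b_1}$ is essential, and is the reason the spaces were set up that way. The rest is the by-now standard "sample at a point via $L^1_\tau$ control of the Fourier transform in time" argument combined with \eqref{estimation controle norme L2L1}.
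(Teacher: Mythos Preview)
Your proposal is correct and follows essentially the same approach as the paper: fix $M$, take an extension $\widetilde{u_M}$ realizing the $\fl{M}^{b_1}(T)$ norm, use that $\chi_{(1\ou M)^{-1}}(\cdot-t_0)\equiv 1$ at $t=t_0$ together with Fourier inversion to bound the pointwise-in-time evaluation by $\norme{L^2_{\xi,q}L^1_\tau}{\,\cdot\,}$, and then invoke \eqref{estimation controle norme L2L1} to pass to the $\X{M}^{b_1}$ norm; the square-summation over $M$ is immediate from almost-orthogonality. Your concern about the admissibility of $t_M=t_0$ is unfounded: since the $\fl{M}^{b_1}$ norm takes a supremum over all $t_M\in\R$, centering at any particular $t_0$ only gives a lower bound on that supremum, which is exactly the direction needed.
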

\begin{proof}
The proof is the same as in \cite[Lemma 3.1]{IonescuKenigTataru2008} : let $M\in 2^{\Z}$, $\widetilde{u_M}$ be an extension of $P_Mu$ to $\R$ with ${\displaystyle \norme{\fl{M}^{b_1}}{\widetilde{u_M}}\infeg 2\norme{\fl{M}^{b_1}(T)}{P_Mu}}$ and $t_M\in [-T;T]$, then it suffices to prove that
\[\norme{L^2_{\xi,q}}{p\cdot\F_{xy}\widetilde{u_M}(t_M)}\lesssim  \norme{\X{M}^{b_1}}{p\cdot \F\left\{\chi_{(1\ou M)^{-1}}(t-t_M)\widetilde{u_M}\right\}}\]
Using the properties of $\chi$ and the inversion formula, we can write
\[\widetilde{u_M}(t_M) = \left\{\chi_{(1\ou M)^{-1}}(\cdot-t_M)\widetilde{u_M}\right\}(t_M) = \int_{\R}\F_t\left\{\chi_{(1\ou M)^{-1}}(t-t_M)\widetilde{u_M}\right\}(\tau)\e^{it_M\tau}\dtau\]
Thus, using (\ref{estimation controle norme L2L1}), we get the final bound
\begin{multline*}
\norme{L^2_{\xi,q}}{p\cdot\F_{xy}\widetilde{u_M}(t_M)} \infeg \norme{L^2_{\xi,q}L^1_{\tau}}{p\cdot\F\left\{\chi_{(1\ou M)^{-1}}(t-t_M)\widetilde{u_M}\right\}}\\ \lesssim \norme{\X{M}^{b_1}}{p\cdot \F\left\{\chi_{(1\ou M)^{-1}}(t-t_M)\widetilde{u_M}\right\}}
\end{multline*}
\end{proof}
\section{Linear estimates}\label{section estimation lineaire}
This section deals with (\ref{estimation lineaire}) and (\ref{estimation lineaire difference}).
\begin{proposition}
Let $T>0$, $b_1\in[0;1/2[$ and $u\in \Bl^{\alpha}(T)$, $f\in \Nl^{\alpha,b_1}(T)$ satisfying
\begin{equation}\label{equation lineaire inhomogene}
\drt u + \drx^3 u -\drx^{-1}\dry^2 u = f
\end{equation}
on $[-T,T]\times\R\times\Tl$.\\
Then $u\in\Fl^{\alpha,b_1}(T)$ and
\begin{equation}\label{equation estimation linéaire}
\norme{\Fl^{\alpha,b_1}(T)}{u}\lesssim \norme{\Bl^{\alpha}(T)}{u}+\norme{\Nl^{\alpha,b_1}(T)}{f}
\end{equation}
\end{proposition}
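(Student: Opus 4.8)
The plan is to reduce the linear estimate to a dyadic, frequency-localized statement, exactly as in \cite[Lemma 3.1]{IonescuKenigTataru2008}. Since both sides of \eqref{equation estimation linéaire} are $\ell^2$-sums in the dyadic parameter $M$ with the weight $(1\ou M)^{2\alpha}$, and since the equation \eqref{equation lineaire inhomogene} commutes with the Littlewood--Paley projector $P_M$, it suffices to prove that for each $M\in 2^{\Z}$,
\[
\norme{\fl{M}^{b_1}(T)}{P_M u}\lesssim \sup_{t_M\in[-T,T]}\norme{\El^0}{p\cdot P_M u(t_M)}+\norme{\nl{M}^{b_1}(T)}{P_M f},
\]
with implicit constant uniform in $M$, $\lambda$ and $T$; summing against $(1\ou M)^{2\alpha}$ then gives \eqref{equation estimation linéaire}, and the membership $u\in\Fl^{\alpha,b_1}(T)$ follows once the right-hand side is finite. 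So from now on I work with a single $M$, writing $u$ for $P_M u$ and $f$ for $P_M f$.

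First I would reduce to the whole line in time. Fix $t_M\in\R$ and a cutoff $\gamma_M(t):=\chi_{(1\ou M)^{-1}}(t-t_M)$; I need to control $\norme{\X{M}^{b_1}}{p\cdot\F(\gamma_M v)}$ where $v$ is an extension of $u$ off $[-T,T]$ with nearly minimal $\fl{M}^{b_1}$-norm, and similarly choose an extension $g$ of $f$ with nearly minimal $\nl{M}^{b_1}$-norm. On the support of $\gamma_M$ we may as well replace $v,g$ by their restrictions; the point is that $\gamma_M v$ solves, on all of $\R$, the inhomogeneous equation
\[
\drt(\gamma_M v)+\drx^3(\gamma_M v)-\drx^{-1}\dry^2(\gamma_M v)=\gamma_M g+\gamma_M' v,
\]
so Duhamel's formula gives $\gamma_M v(t)=U(t-t_M)\big(\gamma_M v(t_M)\big)+\int_{t_M}^{t}U(t-s)\big(\gamma_M g+\gamma_M' v\big)(s)\,ds$. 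Applying $\F$ and $p(\xi,q)\cdot$, and using that $p$ commutes with $U(t)$ and with the time cutoffs, I split into a homogeneous piece $p\cdot\F\big(\gamma_M\,U(\cdot-t_M)w_M\big)$ with $w_M:=v(t_M)$, and a Duhamel piece.

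For the homogeneous piece the computation is the standard one: $\F\big(\gamma_M\,U(\cdot-t_M)w_M\big)(\tau,\zeta)=e^{-it_M(\tau-\omega)}\widehat{\gamma_M}(\tau-\omega)\,\widehat{w_M}(\zeta)$, so that for each dyadic $K$ the $\rho_K(\tau-\omega)$-piece has $L^2_{\tau}$-norm in $(\xi,q)$ bounded by $\normL{2}{\rho_K\,\widehat{\gamma_M}}\cdot|p\,\widehat{w_M}|$; since $|\widehat{\gamma_M}(\tau)|\lesssim(1\ou M)^{-1}\crochet{(1\ou M)^{-1}\tau}^{-4}$ by the Schwartz bound on $\chi$, the sum $\sum_K K^{1/2}\beta_{M,K}^{b_1}\normL{2}{\rho_K\widehat{\gamma_M}}$ converges (the rapid decay beats the polynomial weight $K^{1/2}\beta_{M,K}^{b_1}\lesssim K^{1/2+b_1}$), and we land on $\norme{\El^0}{p\,w_M}\lesssim\sup_{t_M}\norme{\El^0}{p\,P_Mu(t_M)}$, uniformly in $M$. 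For the Duhamel piece, I would invoke the structure of $\nl{M}^{b_1}$ directly: morally $\norme{\fl{M}^{b_1}(T)}{\text{Duhamel}(g)}\lesssim\norme{\nl{M}^{b_1}(T)}{g}$ because $\nl{M}^{b_1}$ is defined through the $\X{M}^{b_1}$-norm of $(\tau-\omega+i(1\ou M))^{-1}$ times the symbol, which is exactly the multiplier produced by the truncated time integral after one more localization; the term $\gamma_M' v$ is absorbed using $\normL{\infty}{\gamma_M'}\lesssim(1\ou M)$ together with Proposition~\eqref{estimation multiplicateur F} (time multiplier bound) and the localization Proposition~\eqref{estimation cle localisation XM}, which is precisely what is needed to re-sum the modulation pieces after multiplying by a Schwartz bump at scale $(1\ou M)^{-1}$.

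The main obstacle is the Duhamel term: one must show that cutting the time integral $\int_{t_M}^{t}$ and then localizing by $\gamma_M$ does not destroy the gain encoded in $(\tau-\omega+i(1\ou M))^{-1}$, in particular in the low-modulation regime $\crochet{\sigma}\lesssim(1\ou M)$ where that factor is $\sim(1\ou M)^{-1}$ and the time integration over an interval of length $(1\ou M)^{-1}$ is exactly resonant. This is handled as in \cite[Lemma 3.1]{IonescuKenigTataru2008} by writing the truncated integral as a Fourier multiplier $\int(\tau-\omega+i(1\ou M))^{-1}\big(\widehat{g}(\tau',\zeta)-e^{i(t-t_M)(\cdots)}\cdots\big)$, splitting the regions $|\tau-\omega|\gtrsim(1\ou M)$ and $|\tau-\omega|\lesssim(1\ou M)$, and in each using the key localization estimates \eqref{propriete XM}--\eqref{estimation cle localisation XM} with $K_0=(1\ou M)$; the extra weight $\beta_{M,K}$ is harmless here since it only helps for large $K$. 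Once the single-$M$ bound is in place, reassembling in $M$ and reading off the $L^2$-level statement \eqref{estimation injection continue difference} for the difference equation requires no new idea — the spaces $\overline{\fl{M}^{b_1}}$, $\overline{\nl{M}^{b_1}}$, $\overline{\Bl}$ are the same construction without the weight $p$, so the identical argument yields \eqref{estimation lineaire difference}.
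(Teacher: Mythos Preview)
Your reduction to a single dyadic $M$ and your treatment of the homogeneous piece are fine in spirit, but the Duhamel part of the argument contains a genuine gap that makes the scheme circular.

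When you apply Duhamel to the equation satisfied by $\gamma_M v$, the homogeneous term is $U(t-t_M)\big(\gamma_M(t_M)v(t_M)\big)=U(t-t_M)v(t_M)$, a global free solution whose time Fourier transform is a Dirac mass at $\tau=\omega$; it is \emph{not} $\gamma_M(t)\,U(t-t_M)w_M$ as you write. Consequently neither the homogeneous nor the Duhamel piece is individually time-localized, and neither has finite $\X{M}^{b_1}$-norm; the decomposition is unusable in this form. More seriously, the commutator term $\gamma_M' v$ in your forcing would, after passing through the inhomogeneous estimate, contribute a quantity of size $\norme{\nl{M}^{b_1}}{\gamma_M' v}\sim\norme{\fl{M}^{b_1}}{v}$ (the factor $(1\ou M)$ from $\gamma_M'$ exactly cancels the $(1\ou M)^{-1}$ gain of $(\tau-\omega+i(1\ou M))^{-1}$). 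Since you chose $v$ to be a near-optimal extension of $P_Mu$ in $\fl{M}^{b_1}$, this bounds $\norme{\fl{M}^{b_1}(T)}{P_Mu}$ by itself, plus the desired right-hand side, with no small constant available to absorb.

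The paper avoids both issues by never assuming an extension of $u$: it \emph{constructs} one explicitly as $\widetilde{u_M}(t)=m_M(t)\bigl\{U(t)P_Mu_0+\int_0^tU(t-t')\widetilde{f_M}(t')\,dt'\bigr\}$, where $\widetilde{f_M}$ is a near-optimal extension of $P_Mf$. The time cutoff is applied \emph{outside} the Duhamel formula, so the homogeneous piece $\chi_{(1\ou M)^{-1}}(t-t_M)\,m_M(t)\,U(t)P_Mu_0$ is genuinely time-localized (and controlled by $\norme{\El^0}{P_Mu(t_M)}$ after shifting the base point), and no commutator term $\gamma_M' v$ ever appears. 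The resonant region $|\tau'-\omega|\lesssim(1\ou M)$ is then handled via the mean-value theorem on $\widehat{\chi}$, yielding the factor $|\tau'-\omega+i(1\ou M)|^{-1}$ that matches the $\nl{M}^{b_1}$ structure. Your sketch of that last step is correct, but to reach it you need to reorganize the argument so that the extension of $u$ is built from $u_0$ and $f$ rather than postulated.
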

\begin{proof}
This proposition is proved in \cite{IonescuKenigTataru2008} (see also \cite{KenigPilod}). We recall the proof here for completeness.

Looking at the definition of $\Fl^{\alpha,b_1}(T)$ (\ref{definition F}), $\Nl^{\alpha,b_1}(T)$ (\ref{definition N}) and $\Bl^{\alpha}(T)$ (\ref{definition espace energie}), we have to prove that $\forall M>0$,
\begin{eqnarray*}
\norme{\fl{M}^{b_1}(T)}{P_Mu} \lesssim \norme{\El^0}{P_Mu_0} + \norme{\nl{M}^{b_1}(T)}{P_Mf}\text{ if } 0<M\infeg 1\\
\norme{\fl{M}^{b_1}(T)}{P_Mu} \lesssim \sup_{t_M\in[-T,T]}\norme{\El^0}{P_Mu(t_M)} + \norme{\nl{M}^{b_1}(T)}{P_Mf}\text{ if } M>1
\end{eqnarray*}
Let $M>0$. As in \cite[Proposition 2.9, p.14]{KenigPilod}, we begin by constructing extensions $\widetilde{u}_M$ (respectively $\widetilde{f}_M$) of $P_Mu$ (respectively $P_Mf$) to $\R$, with a control on the boundary terms.\\
To do so, we first define the smooth cutoff function
\[m_M(t) := \begin{cases}
\chi_{(1\ou M)^{-1}/10}(t+T) \text{ if }t<-T\\
1 \text{ if }t\in [-T,T]\\
\chi_{(1\ou M)^{-1}/10}(t-T) \text{ if }t>T
\end{cases}\]
Next, we define $\widetilde{f}_M$ on $\R$ with
\begin{equation}\label{definition fM estimation lineaire}
\widetilde{f_M}(t) := m_M(t)f_M(t)
\end{equation}
where $f_M$ is an extension of $P_Mf$ to $\R$ satisfying ${\displaystyle \norme{\nl{M}^{b_1}}{f_M}\infeg 2\norme{\nl{M}^{b_1}(T)}{P_Mf}}$.

So $\widetilde{f_M}$ is also an extension of $P_Mf$, with $\supp \widetilde{f_M} \subset [-T-(1\ou M)^{-1}/5,T + (1\ou M)^{-1}/5]$.\\
From (\ref{equation lineaire inhomogene}), we have that
\begin{equation}\label{equation Duhamel estimation lineaire}
P_Mu(t)=U(t)P_Mu_0+\int_0^t U(t-t')P_Mf(t')\dt' \text{ on }[-T,T]
\end{equation}
Thus we define $\widetilde{u_M}$ as
\begin{equation}\label{definition uM estimation lineaire}
\widetilde{u_M}(t) :=m_M(t)\left\{U(t)P_Mu_0+\int_0^t U(t-t')\widetilde{f_M}(t')\dt'\right\},~t\in\R
\end{equation}
The choice of $\widetilde{f_M}$ and $\widetilde{u_M}$ is dictated from the necessity to control the boundary term. First using (\ref{estimation multiplicateur N}) with $m_M$ we have 
\[\norme{\nl{M}^{b_1}}{\widetilde{f_M}}\lesssim \norme{\nl{M}^{b_1}(T)}{P_Mf}\]
and $\widetilde{u_M}$ defines an extension of $P_Mu$.\\
Moreover, if $t_M\notin [-T,T]$, from the choice of $m_M$, we can write $\chi_{(1\ou M)^{-1}}(t-t_M)\widetilde{u_M}(t) = \chi_{(1\ou M)^{-1}}(t-\widetilde{t_M})\chi_{(1\ou M)^{-1}}(t-t_M)\widetilde{u_M}(t)$ for a $\widetilde{t_M}\in [-T,T]$. Then, using (\ref{propriete XM}) and (\ref{estimation cle localisation XM})  we get
\[\sup_{t_M\notin [-T,T]}\norme{\X{M}^{b_1}}{\chi_{(1\ou M)^{-1}}(t-t_M)\widetilde{u_M}}\lesssim \sup_{\widetilde{t_M}\in[-T,T]}\norme{\X{M}^{b_1}}{\chi_{(1\ou M)^{-1}}(t-\widetilde{t_M})\widetilde{u_M}}\]
Thus it suffices to prove
\begin{multline*}
\sup_{t_M\in[-T,T]}\norme{\X{M}^{b_1}}{p\cdot\F\left\{\chi(t-t_M)\widetilde{u_M}\right\}} \lesssim \norme{\El^0}{\widetilde{u_M}(0)}+\\ \sup_{\widetilde{t_M}\in \R}\norme{\X{M}^{b_1}}{(\tau-\omega+i)^{-1}p\cdot\F\left\{\chi(t-\widetilde{t_M})\widetilde{f_M}\right\}}\text{ if }M\infeg 1
\end{multline*}
and
\begin{multline*}
\sup_{t_M\in[-T,T]}\norme{\X{M}^{b_1}}{p\cdot\F\left\{\chi_{M^{-1}}(t-t_M)\widetilde{u_M}\right\}} \lesssim \sup_{\widehat{t_M}\in[-T,T]}\norme{\El^0}{\widetilde{u_M}(\widehat{t_M})}+\\ \sup_{\widetilde{t_M}\in \R}\norme{\X{M}^{b_1}}{(\tau-\omega+iM)^{-1}p\cdot\F\left\{\chi_{M^{-1}}(t-\widetilde{t_M})\widetilde{f_M}\right\}}\text{ if }M>1
\end{multline*}
Note that, since $m_M\equiv 1$ on $[-T,T]$ and $u$ is a solution of (\ref{equation lineaire inhomogene}), for $t_M\in[-T,T]$, we have
\[P_Mu(t_M) = U(t_M)P_Mu_0+\int_0^{t_M}U(t-t')\widetilde{f_M}(t')\dt'\]
and thus
\[\widetilde{u_M}(t+t_M) = m_M(t+t_M)\left\{U(t)P_Mu(t_M) + \int_{0}^{t}U(t-t')\widetilde{f_M}(t'+t_M)\dt'\right\}\]
Finally, it suffices to prove that
\begin{equation}\label{estimation lineaire homogene bf}
\norme{\X{M}^{b_1}}{p\cdot\F\left\{\chi(t-t_M)m_M(t)U(t)P_Mu_0\right\}} \lesssim \norme{\El^0}{\widetilde{u_M}(0)}
\end{equation}
and
\begin{multline}\label{estimation lineaire inhomogene bf}
\norme{\X{M}^{b_1}}{p\cdot\F\left\{\chi(t-t_M)m_M(t)\int_0^tU(t-t')\widetilde{f_M}(t')\dt'\right\}}\\
\lesssim \norme{\X{M}^{b_1}}{(\tau-\omega+i)^{-1}p\cdot\F\left\{\chi(t-t_M)\widetilde{f_M}\right\}}
\end{multline}
for the low-frequency part, and
\begin{equation}\label{estimation lineaire homogene hf}
\norme{\X{M}^{b_1}}{p\cdot\F\left\{\chi_{M^{-1}}(t)m_M(t+t_M)U(t)P_Mu(t_M)\right\}} \lesssim \norme{\El^0}{\widetilde{u_M}(t_M)}
\end{equation}
and
\begin{multline}\label{estimation lineaire inhomogene hf}
\norme{\X{M}^{b_1}}{p\cdot\F\left\{\chi_{M^{-1}}(t)m_M(t+t_M)\int_0^tU(t-t')\widetilde{f_M}(t_M+t')\dt'\right\}}\\ \lesssim \norme{\X{M}^{b_1}}{(\tau-\omega+iM)^{-1}p\cdot\F\left\{\chi_{M^{-1}}(t-t_M)\widetilde{f_M}\right\}}
\end{multline}
for the high-frequency part.\\
To prove those estimates, we first notice that, since $t'\in [0;t]$ and $t\in \supp\chi_{(1\ou M)^{-1}}$, we can write $\widetilde{f_M}$ as
\[\widetilde{f_M}(t_M+t') = \sum_{|n|\infeg 100}f_{M,n}(t_M+t'):=\sum_{|n|\infeg 100}\gamma\left((1\ou M)t'-n\right)\widetilde{f_M}(t_M+t')\]
where $\gamma : \R\rightarrow[0;1]$ is a smooth partition of unity, satisfying $\supp\gamma\subset [-1;1]$ and for all $x\in\R$,
\[\sum_{n\in\Z}\gamma(x-n)=1\]
The second observation is that, for a fixed $t_M$, we have for the homogeneous term
\[\norme{\X{M}^{b_1}}{p\cdot\F\left\{\chi_{M^{-1}}(t)m_M(t+t_M)U(t)P_Mu(t_M)\right\}} \lesssim \norme{\fl{M}^{b_1}}{m_MU(t)P_Mu(t_M)}  \]
so we can remove the localization $m_M(t)$ thanks to (\ref{estimation multiplicateur F}), and similarly for the inhomogeneous term.\\
Computing the Fourier transform in the left-hand side of (\ref{estimation lineaire homogene bf}) and using the bound
\[\norme{L^2_{\tau}}{\rho_K(\tau-\omega)\e^{it_M(\tau-\omega)}\widehat{\chi}(\tau-\omega)} \lesssim \normL{2}{\rho_K (\tau) \crochet{\tau}^{-2}}\lesssim K^{-3/2}\]
since $\widehat{\chi}\in\S(\R)$, we then obtain
\begin{multline*}
\norme{\X{M}^{b_1}}{p\cdot\F\left\{\chi(t-t_M)U(t)P_Mu_0\right\}} \\
\lesssim \sum_{K\supeg 1} K^{1/2}\beta_{M,K}^{b_1}\normL{2}{\rho_K(\tau-\omega)p\cdot\e^{it_M(\tau-\omega)}\widehat{\chi}(\tau-\omega)\widehat{P_Mu_0}}\\
\lesssim \normL{2}{P_Mu_0}
\end{multline*}
The proof of (\ref{estimation lineaire homogene hf}) is the same replacing the first bound by
\[\norme{L^2_{\tau}}{\rho_K(\tau-\omega) M^{-1}\crochet{M^{-1}(\tau-\omega)}^{-2}}\lesssim M^{-1}K^{1/2}(1\ou M^{-1}K)^{-2}\]
For (\ref{estimation lineaire inhomogene bf}) and (\ref{estimation lineaire inhomogene hf}), a computation gives first
\begin{multline*}
\F\left\{\chi_{(1\ou M)^{-1}}(t)\int_0^tU(t-t')f_{M,n}(t_M+t')\dt'\right\}(\tau) \\
= (1\ou M)^{-1}\int_{\R}\frac{\widehat{\chi}((1\ou M)^{-1}(\tau-\tau'))-\widehat{\chi}((1\ou M)^{-1}(\tau-\omega))}{i(\tau'-\omega)}\\ \cdot\e^{it_M\tau'}\F\left\{f_{M,n}\right\}(\tau')\dtau'
\end{multline*}
Now, we distinguish between two cases, wether $|\tau'-\omega+i(1\ou M) |\sim |\tau'-\omega|$ or $|\tau'-\omega+i(1\ou M) |\sim (1\ou M)$.\\
First, if $|\tau'-\omega|\gtrsim (1\ou M)$, we have
\begin{multline*}
\left|\frac{\widehat{\chi}((1\ou M)^{-1}(\tau-\tau'))-\widehat{\chi}((1\ou M)^{-1}(\tau-\omega))}{i(\tau'-\omega)}\right|\\ \lesssim \left|\frac{\widehat{\chi}((1\ou M)^{-1}(\tau-\tau'))}{i(\tau'-\omega+i(1\ou M))}\right|+\left|\frac{\widehat{\chi}((1\ou M)^{-1}(\tau-\omega))}{i(\tau'-\omega+i(1\ou M))}\right|
\end{multline*}
Now if $|\tau'-\omega|\lesssim (1\ou M)$ we apply the mean value theorem to $\widehat{\chi}$ so that
\[\widehat{\chi}((1\ou M)^{-1}(\tau-\tau'))-\widehat{\chi}((1\ou M)^{-1}(\tau-\omega)) = (1\ou M)^{-1}\widehat{\chi}'(\theta)\cdot (\tau'-\omega)\]
for a $\theta \in [\tau-\tau';\tau-\omega]$. Thus we have
\begin{multline*}
\left|\frac{\widehat{\chi}((1\ou M)^{-1}(\tau-\tau'))-\widehat{\chi}((1\ou M)^{-1}(\tau-\omega))}{i(\tau'-\omega)}\right| \lesssim (1\ou M)^{-1}\left|\widehat{\chi}'(\theta)\right|\\ \lesssim |\tau'-\omega+i(1\ou M)|^{-1}|\widehat{\chi}'(\theta)|
\end{multline*}
Finally, using the assumption on $\theta$ and that $\widehat{\chi}\in \S(\R)$, we have in both cases
\begin{multline*}
\left|\frac{\widehat{\chi}((1\ou M)^{-1}(\tau-\tau'))-\widehat{\chi}((1\ou M)^{-1}(\tau-\omega))}{i(\tau'-\omega)}\right|\\ \lesssim \left|\frac{\crochet{(1\ou M)^{-1}(\tau-\tau')}^{-4}}{\tau'-\omega+i(1\ou M)}\right| + \left|\frac{\crochet{(1\ou M)^{-1}(\tau-\omega)}^{-4}}{\tau'-\omega+i(1\ou M)}\right|
\end{multline*}
Coming back to (\ref{estimation lineaire inhomogene bf}) and (\ref{estimation lineaire inhomogene hf}), the left-hand side can be split into
\begin{multline*}
\sum_{|n|\infeg 100}\norme{\X{M}^{b_1}}{p\cdot (1\ou M)^{-1}\int_{\R}\left|\frac{\crochet{(1\ou M)^{-1}(\tau-\tau')}^{-4}}{\tau'-\omega+i(1\ou M)}\F\left\{f_{M,n}\right\}(\tau')\right|\dtau'}\\ +\sum_{|n|\infeg 100}\norme{\X{M}^{b_1}}{p\cdot (1\ou M)^{-1}\int_{\R}\left|\frac{\crochet{(1\ou M)^{-1}(\tau-\omega)}^{-4}}{\tau'-\omega+i(1\ou M)}\F\left\{f_{M,n}\right\}(\tau')\right|\dtau'}
\end{multline*}
The first term is handled with (\ref{propriete XM}) and (\ref{estimation cle localisation XM}) with $K_0 = (1\ou M)$ and $\gamma = \F^{-1}\left\{\crochet{\cdot}^{-4}\right\}$. This term is thus controled by
\[\sup_{|n|\infeg 100}\norme{\X{M}^{b_1}}{p\cdot(\tau-\omega+i(1\ou M))^{-1}\F\left\{f_{M,n}\right\}}\lesssim \norme{\fl{M}^{b_1}}{\widetilde{f_M}}\]
where in the last step we have used that 
\[\gamma((1\ou M)t-n)= \gamma((1\ou M)t-n)\chi_{(1\ou M)^{-1}}(t-(1\ou M)^{-1}n)\]
and (\ref{propriete XM})-(\ref{estimation cle localisation XM}) to get rid of $\gamma$.

It remains to treat the second term. By definition of the $\X{M}^{b_1}$ norm, we can write it
\begin{multline*}
\sum_{K\supeg 1} K^{1/2}\beta_{M,K}^{b_1}\left|\left|\rho_K(\tau-\omega)p\cdot (1\ou M)^{-1}\right.\right.\\
\cdot\left.\left.\int_{\R}\left|\frac{\crochet{(1\ou M)^{-1}(\tau-\omega)}^{-4}}{\tau'-\omega+i(1\ou M)}\F\left\{f_{M,n}\right\}(\tau')\right|\dtau'\right|\right|_{L^2}\\
=\sum_{K\supeg 1}K^{1/2}\beta_{M,K}^{b_1}\left|\left|p\cdot\norme{L^1_{\tau'}}{(\tau'-\omega+i(1\ou M))^{-1}\F\left\{f_{M,n}\right\}}\right.\right. \\ \left.\left.\cdot\norme{L^2_{\tau}}{\rho_K(\tau-\omega)(1\ou M)^{-1}\crochet{(1\ou M)^{-1}(\tau-\omega)}^{-4}}\right|\right|_{L^2_{\xi,q}}
\end{multline*}
Now, since
\[\sum_{K\supeg 1}K^{1/2}\beta_{M,K}^{b_1}(1\ou M)^{-1}\crochet{(1\ou M)^{-1}K}^{-4}\normL{2}{\rho_K}\lesssim 1\]
we can use (\ref{estimation controle norme L2L1}) to bound the last term with
\begin{multline*}
\norme{L^2_{\xi,q}}{p\cdot\norme{L^1_{\tau'}}{(\tau'-\omega+i(1\ou M))^{-1}\F\left\{f_{M,n}\right\}}}\\ \lesssim \norme{\X{M}^{b_1}}{p\cdot(\tau'-\omega+i(1\ou M))^{-1}\F\left\{f_{M,n}\right\}}
\end{multline*}
which concludes the proof through the same argument than above.
\end{proof}
Proceeding in the same way at the $L^2$ level, we have also
\begin{proposition}
Let $T>0$, $b_1\in [0;1/2[$ and $u\in \overline{\Bl}^{b_1}(T)$, $f\in\overline{\Nl}^{b_1}(T)$ satisfying (\ref{equation lineaire inhomogene}) on $[-T,T]\times\R\times\Tl$. Then
\begin{equation}\label{equation estimation linéaire difference}
\norme{\overline{\Fl}^{b_1}(T)}{u}\lesssim \norme{\overline{\Bl}(T)}{u}+\norme{\overline{\Nl}^{b_1}(T)}{f}
\end{equation}
\end{proposition}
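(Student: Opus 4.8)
The plan is to run exactly the same argument as in the previous proposition, but replacing the weighted space $\Fl^{\alpha,b_1}(T)$ with $\overline{\Fl}^{b_1}(T)$, i.e. working at the $L^2$ level without the weight $p(\xi,q)$. In view of the definitions \eqref{definition F difference}, \eqref{definition N difference} and \eqref{definition espace energie difference}, it suffices to prove for every $M>0$ the frequency-localized bounds
\begin{eqnarray*}
\norme{\overline{\fl{M}^{b_1}}(T)}{P_Mu} &\lesssim& \normL{2}{P_Mu_0}+\norme{\overline{\nl{M}^{b_1}}(T)}{P_Mf}\quad\text{if }0<M\infeg 1,\\
\norme{\overline{\fl{M}^{b_1}}(T)}{P_Mu} &\lesssim& \sup_{t_M\in[-T,T]}\normL{2}{P_Mu(t_M)}+\norme{\overline{\nl{M}^{b_1}}(T)}{P_Mf}\quad\text{if }M>1,
\end{eqnarray*}
and then square and sum over dyadic $M$. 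First I would introduce the same smooth cutoff $m_M$ supported near $[-T,T]$, set $\widetilde{f_M}=m_Mf_M$ where $f_M$ is a near-optimal $\overline{\nl{M}^{b_1}}$-extension of $P_Mf$, and define $\widetilde{u_M}$ by the Duhamel formula \eqref{definition uM estimation lineaire} associated with \eqref{equation Duhamel estimation lineaire}. Since the multiplier estimates \eqref{estimation multiplicateur F} and \eqref{estimation multiplicateur N} and the localization estimates \eqref{propriete XM}, \eqref{estimation cle localisation XM} are all stated (and proved) for the unweighted dyadic space $\X{M}^{b_1}$ and transfer verbatim to $\overline{\fl{M}^{b_1}}$, $\overline{\nl{M}^{b_1}}$, one reduces to the homogeneous and inhomogeneous estimates \eqref{estimation lineaire homogene bf}--\eqref{estimation lineaire inhomogene hf} with the weight $p$ simply deleted.

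The homogeneous piece is then immediate: computing the Fourier transform of $\chi(t-t_M)U(t)P_Mu_0$ and using $\norme{L^2_\tau}{\rho_K(\tau-\omega)\widehat\chi(\tau-\omega)}\lesssim K^{-3/2}$ (respectively the rescaled bound for $M>1$) together with $\sum_K K^{1/2}\beta_{M,K}^{b_1}K^{-3/2}\lesssim 1$, the whole sum collapses to $\normL{2}{P_Mu_0}$, exactly as before but without carrying $p$. For the inhomogeneous piece, I would reuse the partition $\widetilde{f_M}(t_M+t')=\sum_{|n|\infeg100}\gamma((1\ou M)t'-n)\widetilde{f_M}(t_M+t')$ and the explicit identity for $\F\{\chi_{(1\ou M)^{-1}}(t)\int_0^tU(t-t')f_{M,n}(t_M+t')\dt'\}$, then split the kernel according to whether $|\tau'-\omega|\gtrsim(1\ou M)$ or $|\tau'-\omega|\lesssim(1\ou M)$ (using the mean value theorem on $\widehat\chi$ in the second regime) to obtain the same domination by $\crochet{(1\ou M)^{-1}(\tau-\tau')}^{-4}|\tau'-\omega+i(1\ou M)|^{-1}+\crochet{(1\ou M)^{-1}(\tau-\omega)}^{-4}|\tau'-\omega+i(1\ou M)|^{-1}$. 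The first term is absorbed via \eqref{propriete XM}--\eqref{estimation cle localisation XM} with $K_0=(1\ou M)$ and $\gamma=\F^{-1}\crochet{\cdot}^{-4}$; the second, after pulling the $\tau'$-integral out as an $L^1_{\tau'}$ norm and using $\sum_K K^{1/2}\beta_{M,K}^{b_1}(1\ou M)^{-1}\crochet{(1\ou M)^{-1}K}^{-4}\normL{2}{\rho_K}\lesssim1$ together with \eqref{estimation controle norme L2L1}, is bounded by $\norme{\overline{\nl{M}^{b_1}}(T)}{P_Mf}$.

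Since every step is a literal transcription of the proof of the weighted proposition with the factor $p(\xi,q)$ omitted, there is in fact no genuine obstacle here; the only point to check is that the passage from the dyadic estimates to the summed norm \eqref{definition F difference} is just an $\ell^2_M$ sum of squares of the dyadic bounds, which it is by definition. Hence $\norme{\overline{\Fl}^{b_1}(T)}{u}^2=\sum_{M>0}\norme{\overline{\fl{M}^{b_1}}(T)}{P_Mu}^2\lesssim\norme{\overline{\Bl}(T)}{u}^2+\norme{\overline{\Nl}^{b_1}(T)}{f}^2$, which gives \eqref{equation estimation linéaire difference}. For this reason I would not rewrite the computation in full but simply invoke the preceding proof.
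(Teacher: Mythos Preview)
Your proposal is correct and matches the paper's own approach: the paper simply states ``Proceeding in the same way at the $L^2$ level'' and gives no separate argument, since the proof of the weighted proposition transfers verbatim once the factor $p(\xi,q)$ is dropped from the $\X{M}^{b_1}$ computations. Your observation that the key ingredients \eqref{propriete XM}, \eqref{estimation cle localisation XM}, \eqref{estimation multiplicateur F}, \eqref{estimation multiplicateur N}, and \eqref{estimation controle norme L2L1} are already stated for the unweighted space $\X{M}^{b_1}$ is exactly the point.
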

\section{Dyadic estimates}\label{section estimation dyadique}
As in the standard Bourgain method, we will need some bilinear estimates for functions localized in both their frequency and their modulatation. This section deals with estimating expressions under the form ${\displaystyle \int f_1\star f_2 \cdot f_3}$ which will be usefull to prove the main bilinear estimate (\ref{estimation bilineaire}) as well as the energy estimate (\ref{estimation d'energie}). The following lemma gives a first rough estimate :
\begin{lemme}
Let $f_i\in L^2(\R^2\times\Zl)$ be such that $\supp f_i \subset D_{\lambda,M_i,\infeg K_i}\cap \R^2\times I_i$, with $M_i \in 2^{\Z}$, $K_i\in 2^{\N}$ and $I_i\subset \Zl$, $i=1,2,3$. Then
\begin{equation}\label{estimation forme trilineaire triviale}
\int_{\R^2\times\Zl}f_1\star f_2 \cdot f_3 \lesssim M_{min}^{1/2}K_{min}^{1/2}|I|_{min}^{1/2}\prod_{i=1}^3\normL{2}{f_i}
\end{equation}
\end{lemme}
\begin{proof}
The proof is the same as in \cite[Lemma 5.1 (b)]{IonescuKenigTataru2008}. We just have to expand the convolution product in the left-hand side and then apply Cauchy-Schwarz inequality in the variable coresponding to the min : if, for example, $K_1 = K_{min}$, we have
\begin{multline*}
\int_{\R^2\times \Zl} f_1 \star f_2 \cdot f_3 = \int_{\R^2\times\Zl}\int_{\R^2\times\Zl} f_1(\tau-\tau_2,\zeta-\zeta_2)\\
\cdot f_2(\tau_2,\zeta_2)f_3(\tau,\zeta)\dtau_2\dtau\dzeta_2\dzeta
\end{multline*}
Using Cauchy-Schwarz inequality in $\tau$, the previous term is less than
\[\int_{\R\times\Zl}\int_{\R\times\Zl}\norme{L^2_{\tau}}{f_3(\zeta)}\norme{L^2_{\tau}}{\int_{\R}f_1(\tau-\tau_2,\zeta-\zeta_2)f_2(\tau_2,\zeta_2)\dtau_2}\dzeta_2\dzeta\]
Next, a use of Young's inequality $L^1\times L^2 \rightarrow L^2$ in $\tau$ gives the bound
\[\int_{\R\times\Zl}\int_{\R\times\Zl}\norme{L^2_{\tau}}{f_3(\zeta)}\norme{L^2_{\tau}}{f_2(\zeta_2)}\norme{L^1_{\tau_1}}{f_1(\zeta-\zeta_2)}\dzeta_2\dzeta\]
Finally, using again Cauchy-Schwarz inequality in $\tau_1$, the previous term is controlled with
\[\int_{\R\times\Zl}\int_{\R\times\Zl}\norme{L^2_{\tau}}{f_3(\zeta)}\norme{L^2_{\tau}}{f_2(\zeta_2)}K_1^{1/2}\norme{L^2_{\tau_1}}{f_1(\zeta-\zeta_2)}\dzeta_2\dzeta\]
We get (\ref{estimation forme trilineaire triviale}) when proceeding similarly for the integrals in $\xi$ and $q$.
\end{proof}
\subsection{Localized Strichartz estimates}
The purpose of this subsection is to improve (\ref{estimation forme trilineaire triviale}). All the estimates we need are already used in \cite{MST2011} in the context of the KP-II equation. We briefly recall the outline of the proof here for the sake of completeness.

First, we are going to use the following easy lemmas :
\begin{lemme}\label{lemme mesure ensemble avec projections et sections}
Let $\Lambda \subset \R\times \Zl$. We assume that the projection of $\Lambda$ on the $\xi$ axis is contained in an interval $I\subset \R$. Moreover, we assume that the measure of the $q$-sections of $\Lambda$ (that is the sets $\left\{q \in \Zl, (\xi_0,q)\in \Lambda\right\}$ for a fixed $\xi_0$) is uniformly (in $\xi_0$) bounded by a constant $C$. Then we have
\begin{equation}
\left|\Lambda\right| \infeg C |I|
\end{equation}
\end{lemme}
\begin{proof}
The proof is immediate : by definition
\[\left|\Lambda\right| = \int_{I}\left(\int \mathbb{1}_{\Lambda}(\xi,q)\dq\right)\dxi \infeg \int_I C\dxi = C\left|I\right|\]
\end{proof}
\begin{lemme}\label{lemme mesure ensemble avec fonction}
Let $I$, $J$ be two intervals in $\R$, and let $\varphi : I \rightarrow \R$ be a $\mathcal{C}^1$ function with $\inf_{\xi\in J}\left|\varphi'(\xi)\right|>0$. Then
\begin{equation}\label{estimation mesure cas R}
\left|\left\{x \in J,~\varphi(x) \in I\right\}\right| \infeg \frac{|I|}{\inf_{\xi \in J}|\varphi'(\xi)|}
\end{equation}
and
\begin{equation}\label{estimation mesure cas Z}
\left|\left\{q \in J\cap \Zl,~\varphi(q) \in I\right\}\right| \lesssim \crochet{\frac{|I|}{\inf_{\xi\in J}|\varphi'(\xi)|}}
\end{equation}
\end{lemme}
\begin{proof}
Let us define
\[\mathcal{J}:=\left\{x \in J,~\varphi(x) \in I\right\}\]
and
\[\mathcal{J}_{\lambda} := \left\{q \in J\cap \Zl,~\varphi(q) \in I\right\}\]
We just have to use the mean value theorem and write
\[\left|\mathcal{J}\right| = \sup_{x_1,x_2\in\mathcal{J}}|x_2-x_1| = \sup_{x_1,x_2\in \mathcal{J}}\frac{|\varphi(x_2)-\varphi(x_1)|}{|\varphi'(\theta)|}\]
for a $\theta\in [x_1;x_2]$, and (\ref{estimation mesure cas R}) follows since $\sup_{x_1,x_2\in\mathcal{J}}|\varphi(x_2)-\varphi(x_1)|\infeg |I|$ by definition of $\mathcal{J}$. The proof of (\ref{estimation mesure cas Z}) is the same, using that 
\[|\mathcal{J}_{\lambda}|\infeg \lambda^{-1}+\sup_{q_1,q_2\in \mathcal{J}_{\lambda}}|q_2-q_1|\] by definition of $\dq{}$.
\end{proof}
\begin{lemme}
Let $a\neq 0$ ,$b$, $c$ be real numbers and $I\subset \R$ a bounded interval. Then
\begin{equation}\label{estimation mesure parabole R}
\left|\left\{x \in \R,~ax^2+bx+c \in I\right\}\right|\lesssim \frac{|I|^{1/2}}{|a|^{1/2}}
\end{equation}
and
\begin{equation}\label{estimation mesure parabole Z}
\left|\left\{q \in \Zl,~aq^2+bq+c \in I\right\}\right| \lesssim \crochet{\frac{|I|^{1/2}}{|a|^{1/2}}}
\end{equation}
\end{lemme}
\begin{proof}
We begin by proving (\ref{estimation mesure parabole R}). By the linear change of variable $x \mapsto x+ b/(2a)$ it suffices to evaluate
\[
\left|\left\{y\in\R,~ay^2\in\widetilde{I}\right\}\right| \text{ with }\widetilde{I}= I+b^2/(4a)-c,~|\widetilde{I}|=|I|
\]
Writing $\varepsilon:=\sign(a)$, the measure of the previous set is
\[
\int_{\R}\mathbb{1}_{\widetilde{I}}(ay^2)\dy = |a|^{-1/2}\int_{\R}\mathbb{1}_{\varepsilon \widetilde{I}}(x^2)\dx
\]
\begin{itemize}
\item If $0\notin \varepsilon \widetilde{I}$, by symmetry we may assume $\varepsilon\widetilde{I}\subset \R_+^*$ and write $\varepsilon \widetilde{I} = [x_1;x_2]$ with $0<x_1<x_2$. Then an easy computation gives
\begin{multline*}
\left|\left\{y\in\R,~ay^2\in\widetilde{I}\right\}\right| = |a|^{-1/2}\int_{\R}\mathbb{1}_{[x_1;x_2]}(x^2)\dx = |a|^{-1/2}\int_{\R}\mathbb{1}_{[x_1;x_2]}(y)\frac{\dy}{2\sqrt{y}}\\ = |a|^{-1/2}\left[\sqrt{y}\right]_{x_1}^{x_2} =|a|^{-1/2}(\sqrt{x_2}-\sqrt{x_1})\infeg |a|^{-1/2}|I|^{1/2}
\end{multline*}
\item If $0\in \varepsilon \widetilde{I}$ : defining $I^+ := (\varepsilon\widetilde{I}\cup -\varepsilon\widetilde{I})\cap \R^+ = [0;x_2]$ we have
\[\left|\left\{y\in\R,~ay^2\in\widetilde{I}\right\}\right| \infeg 2|a|^{-1/2} \int_{\R}\mathbb{1}_{I^+}(x^2)\dx = 2|a|^{-1/2}\sqrt{x_2} \lesssim  |a|^{-1/2}|I|^{1/2}\]
\end{itemize}
The proof of (\ref{estimation mesure parabole Z}) follows from (\ref{estimation mesure parabole R}) through the same argument as in the proof of (\ref{estimation mesure cas Z}). 
\end{proof}
\newpage
The main estimates of this section are the following.
\begin{proposition}[Dyadic $L^4-L^2$ Strichartz estimate]\label{estimation Strichartz localisee}
Let $M_1,M_2,M_3\in 2^{\Z}$, $K_1,K_2,K_3\in 2^{\N}$ and let $u_i\in L^2(\R^2\times\Zl)$, $i=1,2$, be such that $\supp (u_i)\subset D_{\lambda,M_i,\infeg K_i}$. Then
\begin{multline}\label{estimation Strichartz grossiere}
\norme{L^2}{\mathbb{1}_{D_{\lambda,M_3,\infeg K_3}}\cdot u_1\star u_2}\lesssim (K_1 \et K_2)^{1/2}M_{min}^{1/2}\\ \cdot \crochet{(K_1\ou K_2)^{1/4}(M_1\et M_2)^{1/4}}\normL{2}{u_1}\normL{2}{u_2}
\end{multline}
Moreover, if we are in the regime $K_{max}\infeg 10^{-10}M_1M_2M_3$ then
\begin{multline}\label{estimation Strichartz basse modulation}
\norme{L^2}{\mathbb{1}_{D_{\lambda,M_3,\infeg K_3}}\cdot u_1\star u_2}\lesssim (K_1 \et K_2)^{1/2}M_{min}^{1/2}\\ \cdot \crochet{(K_1\ou K_2)^{1/2}M_{max}^{-1/2}}\normL{2}{u_1}\normL{2}{u_2}
\end{multline}
\end{proposition}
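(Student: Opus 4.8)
The plan is to reduce both estimates to $L^4$-Strichartz estimates for the linear KP-I group, localized in frequency, using the standard $TT^*$ argument. First I would write $u_i = \mathbb{1}_{D_{\lambda,M_i,\le K_i}}\cdot u_i$ and decompose each $u_i$ dyadically in modulation, $u_i = \sum_{K_i'\le K_i}\rho_{K_i'}(\sigma)u_i$, so that by the triangle inequality and summability in $K_i'$ it suffices to treat functions supported in $D_{\lambda,M_i,K_i'}$ with a single modulation block, at the cost of logarithmic losses that are absorbed by the $\crochet{\cdot}$ brackets in the statement (here one uses that $\sum_{K'\le K}(K')^{0+}\lesssim K^{0+}$). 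Then, writing $f_i(\tau,\zeta) = \sigma(\tau,\zeta)$-localized pieces, I would peel off the modulation weights by observing that $u_1\star u_2$ is, up to the modulation factors $\crochet{\sigma_i}\sim K_i$, a space-time Fourier restriction of a product $U(t)g_1\cdot U(t)g_2$ with $g_i$ frequency-localized in $|\xi|\sim M_i$; concretely one uses $\|u_i\|_{L^2_{\tau,\xi,q}} = \|g_i\|_{L^2_{\xi,q}}$ after the change of variables $\tau\mapsto\sigma$ and Plancherel.

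The heart of the matter is then the bilinear frequency-localized estimate
\begin{equation*}
\norme{L^2_{t,x,y}}{P_{M_1}(U(t)g_1)\cdot P_{M_2}(U(t)g_2)}\lesssim C(M_1,M_2,M_3)\normL{2}{g_1}\normL{2}{g_2},
\end{equation*}
which by $TT^*$ / Plancherel in $(t,x,y)$ is equivalent to a bound on the measure of the resonance set. Writing $\zeta = \zeta_1+\zeta_2$ with $\zeta_i = (\xi_i,q_i)$, $|\xi_i|\sim M_i$, the relevant quantity is the volume of
\begin{equation*}
\left\{(\xi_1,q_1) : |\xi_1|\sim M_1,\ |\xi-\xi_1|\sim M_2,\ \omega(\xi_1,q_1)+\omega(\xi-\xi_1,q-q_1)-\omega(\xi,q)\in J\right\}
\end{equation*}
for a fixed output point $(\tau,\xi,q)$ and an interval $J$ of length controlled by $K_1\ou K_2$. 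For fixed $\xi_1$, the function $q_1\mapsto \omega(\xi_1,q_1)+\omega(\xi-\xi_1,q-q_1)$ is a quadratic polynomial in $q_1$ with leading coefficient $\tfrac{1}{\xi_1}+\tfrac{1}{\xi-\xi_1}$, whose absolute value is $\sim \tfrac{M_{max}}{M_1M_2}$ (in the non-parallel regime; the parallel case $\xi_1(\xi-\xi_1)<0$ giving cancellation is handled separately and does not improve things). So Lemma on the parabola (estimates~(\ref{estimation mesure parabole R}) and (\ref{estimation mesure parabole Z})) gives a bound $\crochet{(K_1\ou K_2)^{1/2}(M_1M_2/M_{max})^{1/2}}$ on each $q_1$-section. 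Integrating this over $\xi_1$ in an interval of length $M_{min}$ via Lemma~\ref{lemme mesure ensemble avec projections et sections} yields the measure bound $M_{min}\crochet{(K_1\ou K_2)^{1/2}(M_1M_2/M_{max})^{1/2}}$, and taking square roots produces the factor $M_{min}^{1/2}\crochet{(K_1\ou K_2)^{1/4}(M_1\et M_2)^{1/4}M_{max}^{-1/4}}$; since $M_1M_2/M_{max} = M_{min}\le M_1\et M_2$, the cruder estimate (\ref{estimation Strichartz grossiere}) follows by discarding $M_{max}^{-1/4}$ and bounding $(M_1\et M_2)^{1/4}M_{max}^{-1/4}\le (M_1\et M_2)^{1/4}$... more carefully, $(M_1M_2/M_{max})^{1/4} = M_{min}^{1/4}\le (M_1\et M_2)^{1/4}$, which is exactly the form stated. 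The factor $(K_1\et K_2)^{1/2}$ comes from Cauchy--Schwarz in the modulation variable of the smaller-modulation input, exactly as in the proof of (\ref{estimation forme trilineaire triviale}).

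For the low-modulation improvement (\ref{estimation Strichartz basse modulation}), the regime hypothesis $K_{max}\le 10^{-10}M_1M_2M_3$ guarantees that the modulation interval $J$, of length $\sim K_1\ou K_2$, is short compared to the natural scale $M_1M_2/M_{max}\cdot(\text{range of }q_1)$, so that the $q_1$-section in the parabola estimate lands in the regime where the bound is genuinely $(K_1\ou K_2)^{1/2}(M_1M_2/M_{max})^{1/2}$ rather than the $O(1)$ lattice-counting correction dominating; the bracket $\crochet{(K_1\ou K_2)^{1/2}M_{max}^{-1/2}}$ then records exactly the outcome $M_{min}^{1/2}\cdot(K_1\ou K_2)^{1/4}\cdot(M_1M_2)^{1/4}M_{max}^{-1/4}\cdot M_{min}^{-1/4}$... i.e. after writing $M_1M_2 = M_{min}M_{max}$ one gets $M_{min}^{1/2}(K_1\ou K_2)^{1/4}M_{max}^{-1/4}$, whose square is the claimed $M_{min}(K_1\ou K_2)^{1/2}M_{max}^{-1/2}$ inside the bracket (the outer $M_{min}^{1/2}$ in the statement being separated out). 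The main obstacle, and the only place requiring real care, is the resonance function analysis: one must treat the degenerate directions where $\xi_1$ is close to $0$ or close to $\xi$ (so one of $|\xi_i|$ degenerates — excluded by the dyadic localization, but the coefficient $\tfrac1{\xi_1}+\tfrac1{\xi-\xi_1}$ can still be large or can nearly vanish when $\xi_1$ and $\xi-\xi_1$ have opposite signs and comparable size) and, crucially, exploit as in \cite[Lemma 3.1]{Zhang2015} that $\xi_1$ ranges over an interval of length $\sim M_{min}$ rather than over $\Zl$, which is what lets one avoid the logarithmic loss present in the fully periodic case and recover the clean power-type bounds of \cite{IonescuKenigTataru2008}. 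The $q_1$-variable, on the other hand, does live in $\Zl$, which is why the lattice versions (\ref{estimation mesure cas Z}) and (\ref{estimation mesure parabole Z}) with their $\crochet{\cdot}$ corrections are the ones invoked there — and it is precisely the interplay between the continuous $\xi_1$-integration and the discrete $q_1$-counting that the brackets in the proposition statement are designed to absorb.
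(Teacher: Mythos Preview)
Your outline for (\ref{estimation Strichartz grossiere}) is roughly on track: reduce by Cauchy--Schwarz to estimating the measure of the resonance set $B_{\tau,\xi,q}$, then bound the $q_1$-sections via the parabola lemma and integrate in $\xi_1$ over an interval of length $M_{min}$. This is essentially the paper's argument. Two corrections, though. First, the opening paragraph about dyadically decomposing in modulation and absorbing logarithmic losses into the brackets is unnecessary and misreads the role of $\crochet{\cdot}$: the brackets come from the lattice correction in (\ref{estimation mesure parabole Z}), not from summing modulation blocks. Second, the leading $q_1^2$-coefficient is $\xi/(\xi_1(\xi-\xi_1))$, of size $M_3/(M_1M_2)$, not $M_{max}/(M_1M_2)$ in general; the paper's route through the reduction to $\xi_i\geq 0$ (which forces $M_3\sim M_1\vee M_2$, hence coefficient $\sim (M_1\wedge M_2)^{-1}$) together with the $M_3$-scale decomposition and almost-orthogonality in $k$ is what makes the $\xi_1$-integration produce a factor $M_{min}$. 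Your handwave ``the parallel case \dots\ is handled separately'' does not replace this reduction.

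The genuine gap is in (\ref{estimation Strichartz basse modulation}). Your argument --- that the regime hypothesis $K_{max}\le 10^{-10}M_1M_2M_3$ puts the parabola estimate in its ``genuine'' range --- simply reproduces the first bound, and the algebra you wrote (the mysterious extra $M_{min}^{-1/4}$ and the claimed simplification to $M_{min}^{1/2}(K_1\vee K_2)^{1/4}M_{max}^{-1/4}$) is incorrect: the parabola estimate on the $q_1$-section gives $\crochet{(K_1\vee K_2)^{1/2}(M_1\wedge M_2)^{1/2}}$, no better than before. The paper's improvement rests on a different idea: under the low-modulation hypothesis, $|\Omega|\ll M_1M_2M_3$ forces $(q_1/\xi_1-q_2/\xi_2)^2$ to be close to $3(\xi_1+\xi_2)^2$ (see the factorization (\ref{definition fonction resonance})), hence
\[
\left|\frac{\partial\Omega}{\partial q_1}\right| = 2\left|\frac{q_1}{\xi_1}-\frac{q_2}{\xi_2}\right| \gtrsim M_{max}.
\]
One then applies the \emph{linear} counting lemma (\ref{estimation mesure cas Z}) instead of the parabola lemma, obtaining the $q_1$-section bound $\crochet{(K_1\vee K_2)M_{max}^{-1}}$, which after multiplying by $M_{min}$ and taking square roots yields exactly $M_{min}^{1/2}\crochet{(K_1\vee K_2)^{1/2}M_{max}^{-1/2}}$. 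This derivative lower bound, specific to the KP-I resonance structure near its zero set, is the missing ingredient.

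Finally, your closing remarks about exploiting the continuous $\xi_1$-variable \`a la \cite[Lemma~3.1]{Zhang2015} are misplaced here: that refinement is used in the paper for Proposition~\ref{proposition estimation bilineaire dyadique basse modulation}, not for the present Strichartz-type estimate.
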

\begin{proof}
These estimates are proven in \cite[Proposition 2.1 \& Corollaire 2.9]{MST2011} and \cite[Theorem 2.1, p.456-458]{SautTzvetkov2001} for functions $f_i\in L^2(\R^2\times \Z)$ but with a slightly different support condition : the localization with respect to the modulations is done for the symbol of the linear operator associated with the KP-II equation (i.e $\widetilde{\omega}(\xi,q) = \xi^3-q^2/\xi$), and the fifth-order KP-I equation ($\omega^{5\mathrm{th}}(m,\eta) = -m^5-\eta^2/m$) respectively. As a matter of fact, the proof only uses the form of the expression $(q_1/\xi_1-q_2/\xi_2)$ but does not take into account its sign within the resonnant function. Thus we can obtain the similar estimates for the KP-I equation. Let us recall the main steps in proving these estimates : first, split $u_1$ and $u_2$ depending on the value of $\xi_i$ on an $M_3$ scale 
\[
\norme{L^2}{\mathbb{1}_{D_{\lambda,M_3,\infeg K_3}}\cdot u_1\star u_2} \infeg \sum_{k\in\Z}\sum_{\ell\in\Z} \norme{L^2}{\mathbb{1}_{D_{\lambda,M_3,\infeg K_3}}\cdot u_{1,k}\star u_{2,\ell}}
\] 
with
\[
u_{i,j} := \mathbb{1}_{[jM_3,(j+1)M_3]}(\xi_i)u_i
\]
The conditions $|\xi|\sim M_3$, $\xi_1 \in [kM_3,(k+1)M_3]$ and $\xi-\xi_1 \in [\ell M_3;(\ell+1)M_3]$ require $\ell \in [-k-c;-k+c]$ for an absolute constant $c>0$. Thus we have to get estimates for functions $u_i$ supported in $D_{\lambda,M_i,K_i}\cap\left\{\xi_i \in I_i\right\}$ for some intervals $I_i$.

Moreover, we may assume $\xi_i\supeg 0$ on $\supp~u_i$ (see \cite[p.460]{SautTzvetkov2001}). This is crucial as $\xi \sim \xi_1\ou (\xi-\xi_1)$ in this case.\\

Squaring the left-hand side, it then suffices to evaluate
\begin{multline*}
\int_{\R\times\R_+\times\Zl}\left|\int_{\R\times\R_+\times\Zl}\mathbb{1}_{D_{\lambda,M_3,\infeg K_3}}\cdot u_1(\tau_1,\zeta_1)u_2(\tau-\tau_1,\zeta-\zeta_1)\dtau_1\dxi_1\dq[1]\right|^2\\ \dtau\dxi\dq{}
\end{multline*}

 Using Cauchy-Schwarz inequality, the integral above is controled by
\[\sup_{\tau,\xi\supeg 0,q\in D_{\lambda,M_3,\infeg K_3}} \left|A_{\tau,\xi,q}\right|\cdot\normL{2}{u_1}^2\normL{2}{u_2}^2\]
where $A_{\tau,\xi,q}$ is defined as
\begin{multline*}
A_{\tau,\xi,q} := \left\{(\tau_1,\zeta_1)\in\R\times\R_+\times\Zl,\xi_1\in I_1,\xi-\xi_1\in I_2,~0\infeg \xi_1 \sim M_1,\right.\\ \left.0\infeg \xi-\xi_1\sim M_2,~\crochet{\tau_1 - \omega(\zeta_1)}\lesssim K_1,~\crochet{\tau-\tau_1 - \omega(\zeta-\zeta_1)}\lesssim K_2\right\}
\end{multline*}
Using the triangle inequality in $\tau_1$, we get the bound
\[\left|A_{\tau,\xi,q}\right|\lesssim (K_1\et K_2) \left|B_{\tau,\xi,q}\right|\]
where $B_{\tau,\xi,q}$ is defined as
\begin{multline*}
B_{\tau,\xi,q} := \left\{\zeta_1\in\R_+\times\Zl,~\xi_1\in I_1,\xi-\xi_1\in I_2,~0\infeg \xi_1 \sim M_1,\right.\\ 
\left.0\infeg \xi-\xi_1\sim M_2,~\crochet{\tau-\omega(\zeta) -\Omega(\zeta_1,\zeta-\zeta_1,-\zeta)}\lesssim (K_1\ou K_2)\right\}
\end{multline*}
where $\Omega$ is the resonant function for (\ref{equation KP1}), defined on the hyperplane $\zeta_1+\zeta_2+\zeta_3=0$ :
\begin{multline}\label{definition fonction resonance}
\Omega(\zeta_1,\zeta_2,\zeta_3) := \omega(\zeta_1)+\omega(\zeta_2)+\omega(\zeta_3)\\
= -3\xi_1\xi_2\xi_3+\frac{(\xi_1q_2-\xi_2q_1)^2}{\xi_1\xi_2\xi_3}\\=-\frac{\xi_1\xi_2}{\xi_1+\xi_2}\left\{(\sqrt{3}\xi_1+\sqrt{3}\xi_2)^2-\left(\frac{q_1}{\xi_1}-\frac{q_2}{\xi_2}\right)^2\right\}
\end{multline}
Now, (\ref{estimation Strichartz grossiere}) follows directly from applying lemma~\ref{lemme mesure ensemble avec projections et sections} and (\ref{estimation mesure parabole Z}) to $B_{\tau,\xi,q}$ since its projection on the $\xi_1$ axis is controled by $|I_1|\et |I_2|$, whereas for a fixed $\xi_1$, the cardinal of the $q_1$-section is estimated by $\crochet{(K_1\ou K_2)^{1/2}(M_1\et M_2)^{1/2}}$ using (\ref{estimation mesure parabole Z}) as $\tau -\omega(\zeta)-\Omega(\zeta_1,\zeta-\zeta_1,-\zeta)$ is a polynomial of second order in $q_1$, with a dominant coefficient $\sim (M_1\et M_2)^{-1}$. Thus
\[|B_{\tau,\xi,q}| \lesssim (|I_1|\et|I_2|)\crochet{(K_1\ou K_2)^{1/2}(M_1\et M_2)^{1/2}}\]
which gives the estimate (\ref{estimation Strichartz grossiere}) 
when applied with $I_1 = [kM_3;(k+1)M_3]\cap \mathfrak{I}_{M_1}$ and $I_2=[\ell M_3,(\ell+1)M_3]\cap \mathfrak{I}_{M_2}$ and using Cauchy-Schwarz inequality to sum over $k\in\Z$.\\

In the case $K_{max}\infeg 10^{-10}M_1M_2M_3$, we compute
\[\left|\frac{\partial \Omega}{\partial q_1}\right| =2\left|\frac{q_1}{\xi_1}-\frac{q-q_1}{\xi-\xi_1}\right|=2\left\{\frac{\xi}{\xi_1(\xi-\xi_1)}\left(\Omega+3\xi_1(\xi-\xi_1)\xi\right)\right\}^{1/2}\]
Thus, from the condition $|\Omega|\lesssim K_{max}\infeg 10^{-10}M_1M_2M_3$ we get 
\[\left|\frac{\partial \Omega}{\partial q_1}\right|\gtrsim \left|\frac{\xi}{\xi_1(\xi-\xi_1)}\cdot \xi_1(\xi-\xi_1)\xi\right|^{1/2}\sim M_{max}\]
At last, we can estimate $|B_{\tau,\xi,q}|$ in this regime by using (\ref{estimation mesure cas Z}) instead of (\ref{estimation mesure parabole Z}), which gives the final bound
\[|B_{\tau,\xi,q}| \lesssim (|I_1|\et|I_2|)\crochet{(K_1\ou K_2)M_{max}^{-1}}\]
and (\ref{estimation Strichartz basse modulation}) follows through the same argument as for (\ref{estimation Strichartz grossiere}).
\end{proof}
\begin{remarque}
The estimate (\ref{estimation Strichartz grossiere}) is rather crude, yet sufficient for our purpose. (\ref{estimation Strichartz basse modulation}) is better than (\ref{estimation trilineaire dyadique basse modulation}) below in the regime $K_{max}\lesssim M_1M_2M_3$, $M_{min}\infeg 1$. Thus we do not need to use some function spaces with a special low-frequency structure as in \cite{IonescuKenigTataru2008} to deal with the difference equation, therefore we get a stronger uniqueness criterion. Note that we can perform the same argument in $\R^2$. 
\end{remarque}
\subsection{Dyadic bilinear estimates}
We are now looking to improve (\ref{estimation Strichartz basse modulation}) in the case $M_{min}\supeg 1$. We mainly follow \cite[Lemme 3.1]{Zhang2015}. However, in our situation the frequency for the $x$ variable lives in $\R$ and not in $\Z$, and thus the worst case of \cite[Lemma 3.1]{Zhang2015} (when $K_{med} \lesssim M_{max}M_{min}$) is avoided. So, using that this frequency is allowed to vary in very small intervals, we are able to recover the same result as in \cite[Lemme 5.1(a)]{IonescuKenigTataru2008}. Again, we will crucially use lemmas~\ref{lemme mesure ensemble avec projections et sections} and \ref{lemme mesure ensemble avec fonction}.
\begin{proposition}\label{proposition estimation bilineaire dyadique basse modulation}
Let $M_i,K_i\in 2^{\N}$ and $f_i : \R^2\times\Zl\rightarrow \R^+$, $i=1,2,3$, be such that $f_i\in L^2(\R^2\times\Zl)$ with ${\displaystyle \supp f_i \subset D_{\lambda,M_i,\infeg K_i}}$.\\
If $K_{max} \infeg 10^{-10}M_1M_2M_3$ and $K_{med}\gtrsim M_{max}$, then
\begin{equation}\label{estimation trilineaire dyadique basse modulation}
\int_{\R^2\times\Zl}f_1\star f_2 \cdot f_3 \dtau\dxi\dq\lesssim \left(\frac{K_1K_2K_3}{M_1M_2M_3}\right)^{1/2}\normL{2}{f_1}\normL{2}{f_2}\normL{2}{f_3}
\end{equation}
\end{proposition}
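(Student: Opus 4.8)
The plan is to adapt the argument of \cite[Lemme 3.1]{Zhang2015}, the new point being that on $\R\times\Tl$ the frequency $\xi$ in the $x$ variable is continuous and can therefore be localized at arbitrarily fine scales; this is what will play the role of the change of variables in \cite[Lemme 5.1(a)]{IonescuKenigTataru2008}, which is not available here since one of the frequency variables is discrete, and it is also what rules out the bad case $K_{med}\lesssim M_{min}M_{max}$ responsible for a loss in \cite{Zhang2015}.

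First I would reduce to a convenient configuration. Writing the left-hand side as the integral of $f_1 f_2 f_3$ over $\{\zeta_1+\zeta_2+\zeta_3=0,~\tau_1+\tau_2+\tau_3=0\}$, it is symmetric under permutations of the indices and under $\zeta\mapsto-\zeta$ (which preserves each $D_{\lambda,M_i,\infeg K_i}$, as $\omega$ is odd), so I may relabel and assume $M_1\infeg M_2\infeg M_3$; since $\xi_1+\xi_2+\xi_3=0$ this forces $M_2\sim M_3=M_{max}$. As in the proof of Proposition~\ref{estimation Strichartz localisee}, after a decomposition of the $\xi_i$ on a scale $\sim M_{max}$ and a sign reduction I further reduce to $0\infeg\xi_i$, $\xi_i\in\mathfrak{I}_{M_i}$ (and, the $f_i$ being non-negative, there are no absolute values to track). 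On the support of the integrand the modulation conditions give the resonance relation $|\Omega(\zeta_1,\zeta_2,\zeta_3)|\lesssim K_{max}$, and I would keep the identity (\ref{definition fonction resonance}) in the form
\[\Omega=-3\xi_1\xi_2\xi_3+\frac{\mu^2}{\xi_1\xi_2\xi_3},\qquad \mu:=\xi_3 q_1-\xi_1 q_3=\xi_1 q_2-\xi_2 q_1=\xi_2 q_3-\xi_3 q_2,\]
from which $|\Omega|\lesssim K_{max}\ll M_1M_2M_3\sim\xi_1\xi_2\xi_3$ gives $|\mu|\sim M_1M_2M_3$ and, as already observed in Proposition~\ref{estimation Strichartz localisee}, $|\partial_{q_i}\Omega|\sim M_{max}$ throughout the support.

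The key step is then to decompose each $f_i$ in its $\xi_i$ variable into pieces supported on intervals $I_i$ of length $\delta$ of order $K_{max}M_{max}^{-2}$. This is a legitimate splitting precisely because $\xi_i\in\R$, and $\delta\lesssim M_{min}$ since $K_{max}\lesssim M_1M_2M_3\sim M_{min}M_{max}^2$; on $\T^2$ the analogous localization below the unit scale is impossible, which is the origin of the loss in \cite{Zhang2015}. On one such piece the product $\xi_1\xi_2\xi_3$ varies by at most $\lesssim\delta M_{max}^2\lesssim K_{max}$, so the identity above together with $|\Omega|\lesssim K_{max}$ confines $\mu^2$, hence $\mu$, to a fixed interval of length $\lesssim K_{max}$; since $\mu$ is affine in each $q_i$ with slope $\sim M_{max}$, each $q_i$ is thereby confined to an interval of length $\lesssim K_{max}M_{max}^{-1}$, i.e. to a set of $\dq$-measure $\lesssim K_{max}M_{max}^{-1}$ — and here the hypothesis $K_{med}\gtrsim M_{max}$ enters, guaranteeing $K_{max}M_{max}^{-1}\gtrsim 1$ so that this count carries no spurious factor of $\lambda$. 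For a single piece I would then bound the trilinear form by Cauchy–Schwarz, pulling out one of the three factors (the choice dictated by the modulation sizes) so as to reduce to a weighted $L^2$-estimate for a convolution; the remaining resonant set is controlled by Lemma~\ref{lemme mesure ensemble avec projections et sections} (projecting onto the relevant $\xi$-interval, of length $\delta$), by the estimate (\ref{estimation mesure parabole Z}) together with Lemma~\ref{lemme mesure ensemble avec fonction} and the bound $|\partial_q\Omega|\sim M_{max}$ (for the $q$-section), and by the elementary count of the $\tau$-slab (which contributes the two smallest modulations). Reassembling the pieces — using that, once two of the three intervals $I_1,I_2,I_3$ are fixed, the third is determined up to $O(1)$ choices — the $\delta$- and $M_{min}$-dependences should cancel, leaving the bound $(K_1K_2K_3/M_1M_2M_3)^{1/2}\prod_i\normL{2}{f_i}$, which is exactly \cite[Lemme 5.1(a)]{IonescuKenigTataru2008}.

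I expect the main obstacle to be precisely this last bookkeeping: the localization scale $\delta$, the counting of the resonant set on each piece, and the orthogonality loss incurred when summing the pieces back up must be balanced exactly, and one must check that no implicit constant depends on $\lambda$ — in particular every passage between the $\dq$-measure of a set of $q$'s and the length of a $q$-interval must avoid a power of $\lambda$, which is where the lower bound $K_{med}\gtrsim M_{max}$ is genuinely needed. This is the "looking closely at the computations" of the introduction, and it is where the whole improvement over the crude Strichartz bound (\ref{estimation Strichartz basse modulation}) of Proposition~\ref{estimation Strichartz localisee} is extracted.
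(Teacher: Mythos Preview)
Your reductions and your identification of the mechanism are correct: the resonance relation forces $|\mu|\sim M_1M_2M_3$ and $|\partial_{q}\Omega|\sim M_{max}$, and the fact that $\xi$ is continuous is indeed what separates $\R\times\Tl$ from $\T^2$. The gap is in the reassembly. After localizing each $\xi_i$ to an interval of length $\delta\sim K_{max}M_{max}^{-2}$, the per-piece Strichartz count you describe gives exactly $(K_{min}\,\delta\,K_{med}/M_{max})^{1/2}=(K_1K_2K_3/M_{max}^3)^{1/2}$; but summing the pieces by $\ell^1\ast\ell^2\to\ell^2$ on the indices costs at best $(\#\{j_{\min}\})^{1/2}=(M_{min}/\delta)^{1/2}$, and the product is $(K_{min}K_{med}M_{min}/M_{max})^{1/2}$, which exceeds the target $(K_1K_2K_3/(M_{min}M_{max}^2))^{1/2}$ by the factor $(M_{min}^2M_{max}/K_{max})^{1/2}$ --- unbounded in the regime $K_{max}\ll M_{min}^2M_{max}$. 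The $\mu$-localization you derive is correct but does not supply the missing orthogonality: per piece it gives the same $q$-count as the $|\partial_q\Omega|$ argument, and across pieces with the same output $\xi$-interval the associated $\mu$-windows are not in general disjoint (when $M_1\sim M_2\sim M_3$ the map $j_1\mapsto\xi_1\xi_2\xi_3$ at fixed $\xi_1+\xi_2$ has a critical point). So the ``bookkeeping'' you flag as delicate is in fact where the argument breaks.

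The paper's route is structurally different. It does \emph{not} pre-decompose in $\xi$; instead, after the substitution $\tau\mapsto\theta=\tau-\omega$, it slices $f_3$ in $\theta$ into slabs $f_{3,\ell}$ of thickness $K_{med}$ and simultaneously decomposes the $(q_1,q_2)$-domain via integer parameters $Q_1,Q_2$ discretizing $q_i/\xi_i\mp\sqrt3\,\xi_i$ at scale $K_{med}/(M_1M_2)$. The crucial orthogonality is that, for fixed $\xi_1,\xi_2,Q_2$, the formula $\Omega=(\xi_1\xi_2 K_{med}/M_1M_2)(Q_2+O(1))(2\sqrt3+O(10^{-5}))$ pins down $\ell$ up to $O(1)$ choices; this lets the $\ell$-sum be absorbed into the $Q_2$-sum without loss. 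The continuous $\xi$-variable is exploited only at the very end: after all $q$- and $\theta$-summations one integrates in $\xi_2$ a function evaluated at $q=[\varphi(\xi,Q_1,\xi_2,Q_2)]_\lambda+r$, and since $|\partial_{\xi_2}\varphi|\sim M_3$, the real-variable estimate (\ref{estimation mesure cas R}) bounds each level set in $\xi_2$ by $\lambda^{-1}M_3^{-1}$. It is this single factor of $M_3^{-1}$ --- unavailable in (\ref{estimation mesure cas Z}) --- that yields the full gain over \cite{Zhang2015}.
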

\begin{proof}
We begin as in \cite[Lemma 5.1(a)]{IonescuKenigTataru2008}. Defining
\[\I(f_1,f_2,f_3) :=\int_{\R^2\times\Zl}f_1\star f_2 \cdot f_3 \dtau\dxi\dq\]
we observe that
\begin{equation}\label{equation symétrie I}
\I(f_1,f_2,f_3) = \I(\widetilde{f_1},f_3,f_2) = \I(\widetilde{f_2},f_3,f_1)
\end{equation} where we define $\widetilde{f}(x) := f(-x)$. Thus, as ${\displaystyle \normL{2}{\widetilde{f}}=\normL{2}{f}}$, up to replacing $f_i$ by $\widetilde{f_i}$, we may assume $K_1\infeg K_2\infeg K_3$.\\
Moreover, since the expression is symmetrical in $f_1,f_2$ we can assume $M_2\infeg M_1$.\\
We first write
\begin{multline*}
\I(f_1,f_2,f_3)=\int_{\R^2\times\Zl} f_1\star f_2\cdot f_3 \dtau\dxi\dq\\= \int_{\R^2\times\Zl}\int_{\R\times\R^+\times\Zl} f_1(\tau_1,\zeta_1)f_2(\tau_2,\zeta_2)f_3(\tau_1+\tau_2,\zeta_1+\zeta_2)\\
\dtau_1\dtau_2\dzeta_1\dzeta_2
\end{multline*}
Defining ${\displaystyle f_i^{\#}(\theta,\zeta):=f_i(\theta+\omega(\zeta),\zeta)}$ we get ${\displaystyle \normL{2}{f_i^{\#}}=\normL{2}{f_i}}$ and $\supp f_i^{\#} \subset \{|\theta|\lesssim K_i,|\xi|\sim M_i\}$. Changing of variables, we have
\begin{multline*}
\I(f_1,f_2,f_3) = \int_{\R^2\times\Zl}\int_{\R^2\times\Zl} f_1^{\#}(\theta_1,\zeta_1)f_2^{\#}(\theta_2,\zeta_2)\\ \cdot f_3^{\#}(\theta_1+\theta_2+\Omega(\zeta_1,\zeta_2,-\zeta_1-\zeta_2),\zeta_1+\zeta_2)\dtheta_1\dtheta_2\dzeta_1\dzeta_2
\end{multline*}
where the resonant function
\[\Omega(\zeta_1,\zeta_2,-\zeta_1-\zeta_2)=-\frac{\xi_1\xi_2}{\xi_1+\xi_2}\left\{\sqrt{3}|\xi_1+\xi_2|+\left|\frac{q_1}{\xi_1}-\frac{q_2}{\xi_2}\right|\right\}\left\{\sqrt{3}|\xi_1+\xi_2|-\left|\frac{q_1}{\xi_1}-\frac{q_2}{\xi_2}\right|\right\}\] has been defined in (\ref{definition fonction resonance}) in the previous proposition.

Thus
\begin{multline*}
\I(f_1,f_2,f_3) = \int_A f_1^{\#}(\theta_1,\zeta_1)f_2^{\#}(\theta_2,\zeta_2)\\
\cdot f_3^{\#}(\theta_1+\theta_2+\Omega(\zeta_1,\zeta_2,-\zeta_1-\zeta_2),\zeta_1+\zeta_2)\dtheta_1\dtheta_2\dzeta_1\dzeta_2
\end{multline*}
with
\begin{align*}
A := \left\{(\theta_1,\zeta_1,\theta_2,\zeta_2)\in (\R^2\times \Zl)^2,~|\xi_i|\sim M_i,~|\xi_1+\xi_2|\sim M_3,~|\theta_i|\lesssim K_i,\right.\\ \left.~|\theta_1+\theta_2+\Omega(\zeta_1,\zeta_2,-\zeta_1-\zeta_2)|\lesssim K_3,~~i=1,2\right\}
\end{align*}
We can decompose $A\subset I_{\infeg K_1}\times I_{\infeg K_2}\times B$ with $B$ defined as
\begin{multline}\label{definition B}
B := \left\{(\zeta_1,\zeta_2)\in(\R\times\Zl)^2,~|\xi_i|\sim M_i,i=1,2,~|\xi_1+\xi_2|\sim M_3,\right.\\ \left.~|\Omega|\lesssim K_3\right\}
\end{multline}
We can further split
\[B = \bigsqcup_{|\ell|\lesssim K_3/K_2} B_{\ell}\]
with
\begin{equation}\label{definition Bgamma}
B_{\ell} := \left\{(\zeta_1,\zeta_2)\in B,~\Omega\in [\ell K_2;(\ell+1)K_2]\right\}
\end{equation}
and as well for $f_3$ :
\begin{equation}\label{definition fgamma}
f_3^{\#} = \sum_{|\ell|\lesssim K_3/K_2} f_{3,\ell}^{\#} \text{ with } f_{3,\ell}^{\#}(\theta,\xi,q) := \mathbb{1}_{[\ell K_2,(\ell+1)K_2]}(\theta) f_3^{\#}(\theta,\xi,q)
\end{equation}
Next, using Cauchy-Schwarz inequality in $\theta_2$ then $\theta_1$, we obtain
\begin{multline*}
\I(f_1,f_2,f_3) \infeg \sum_{|\ell|\lesssim K_3/K_2}\int_{I_{\infeg K_1}\times B_{\ell}} |f_1^{\#}(\theta_1,\xi_1,q_1)|\norme{L^2_{\theta_2}}{f_2^{\#}(\theta_2,\xi_2,q_2)} \\ \cdot\norme{L^2_{\theta_2}}{f_{3,\ell}^{\#}(\theta_1+\theta_2+\Omega,\xi_1+\xi_2,q_1+q_2)}\dtheta_1\dxi_1\dxi_2\dq[1]\dq[2] \\
\lesssim K_1^{1/2}\sum_{|\ell|\lesssim K_3/K_2} \int_{B_{\ell}} \norme{L^2_{\theta_1}}{f_1^{\#}(\theta_1,\xi_1,q_1)}\norme{L^2_{\theta_2}}{f_2^{\#}(\theta_2,\xi_2,q_2)}\\ \cdot\norme{L^2_{\theta}}{f_{3,\ell}^{\#}(\theta,\xi_1+\xi_2,q_1+q_2)}\dxi_1\dxi_2\dq[1]\dq[2]
\end{multline*}
This allows us to work with functions depending on $(\xi_1,q_1),(\xi_2,q_2)$ only, loosing just a factor $K_1^{1/2}$ in the process. The informations $|\Omega|\lesssim K_3$ and $\supp f_3 \subset I_{\infeg K_3}\times \mathfrak{I}_{M_3}\times \Zl$ have been kept in the decomposition on $\ell$ of $B$ and $f_3^{\#}$.

Finally, defining 
\[g_i(\xi_i,q_i) := \norme{L^2_{\theta_i}}{f_i^{\#}(\theta_i,\xi_i,q_i)},~i=1,2\text{ and } g_{3,\ell}(\xi,q) := \norme{L^2_{\theta}}{f_{3,\ell}^{\#}(\theta,\xi,q)}\]
and writing
\begin{equation}\label{definition J}
J_{\ell}(g_1,g_2,g_{3,\ell}) := \int_{B_{\ell}}g_1(\xi_1,q_1)g_2(\xi_2,q_2)g_{3,\ell}(\xi_1+\xi_2,q_1+q_2)\dxi_1\dxi_2\dq[1]\dq[2]
\end{equation}
it suffices to prove that
\begin{equation}\label{estimation objectif gi}
J := \sum_{\ell}J_{\ell}(g_1,g_2,g_{3,\ell})\lesssim \left(\frac{K_2K_3}{M_1M_2M_3}\right)^{1/2}\norme{L^2_{\xi_1,q_1}}{g_1}\norme{L^2_{\xi_2,q_2}}{g_2}\left\{\sum_{\ell}\norme{L^2_{\xi,q}}{g_{3,\ell}}^2\right\}^{1/2}
\end{equation}
As we are in the regime $K_{max}\lesssim M_1M_2M_3$, $\Omega$ is close to zero. Since $q_i\in\Zl$, we cannot just make a change of variables as in \cite[Lemme 5.1(a)]{IonescuKenigTataru2008}. Thus, to take into account that ${\displaystyle(\sqrt{3}\xi_1+\sqrt{3}\xi_2)^2 \sim \left(\frac{q_1}{\xi_1}-\frac{q_2}{\xi_2}\right)^2}$, we split $B_{\ell}$ depending on the values of $q_1$ and $q_2$.

First, as in \cite[Lemma 5.1(a)]{IonescuKenigTataru2008}, we can split
\[B_{\ell}:=B_{\ell}^{++}\sqcup B_{\ell}^{+-}\sqcup B_{\ell}^{-+}\sqcup B_{\ell}^{--}\]
with
\[B_{\ell}^{\varepsilon_1,\varepsilon_2}:=\left\{(\xi_1,q_1),(\xi_2,q_2) \in B_{\ell},~\sign(\xi_1+\xi_2)=\varepsilon_1,~\sign\left(\frac{q_1}{\xi_1}-\frac{q_2}{\xi_2}\right)=\varepsilon_2\right\}\]
where $\varepsilon_i\in\{\pm 1\}$.

Since the transformations $(\xi_1,q_1),(\xi_2,q_2)\mapsto (\varepsilon_1\xi_1,\varepsilon_2 q_2),(\varepsilon_1 \xi_1,\varepsilon_2 q_2)$ maps $B_{\ell}^{\varepsilon_1,\varepsilon_2}$ to $B_{\ell}^{++}$, it suffices to estimate
\[J_{\ell}^{++}(g_1,g_2,g_{3,\ell}) := \int_{B_{\ell}^{++}}g_1(\xi_1,q_1)g_2(\xi_2,q_2)g_{3,\ell}(\xi_1+\xi_2,q_1+q_2)\dxi_1\dxi_2\dq[1]\dq[2]\]
Moreover, the definition of $\Omega$ and the condition $|\Omega|\lesssim K_3$ give
\begin{equation}\label{equation localisation Q2}
\left|\sqrt{3}(\xi_1+\xi_2)-\left(\frac{q_1}{\xi_1}-\frac{q_2}{\xi_2}\right)\right|\infeg \frac{|\Omega|}{|\xi_1\xi_2|}\lesssim \frac{K_3}{M_1M_2}
\end{equation}
on $B_{\ell}^{++}$.

Now, we can define
\begin{equation}\label{definition Q1}
\mathcal{Q}_1(\xi_1,q_1,\xi_2,q_2) := \left\lfloor\frac{M_1M_2}{K_2}(q_1-\sqrt{3}\xi_1^2)/\xi_1\right\rfloor\in\Z
\end{equation}
and
\begin{equation}\label{definition Q2}
\mathcal{Q}_2(\xi_1,q_1,\xi_2,q_2) := \mathcal{Q}_1(\xi_1,q_1,\xi_2,q_2) - \left\lfloor\frac{M_1M_2}{K_2}( q_2 + \sqrt{3}\xi_2^2)/\xi_2\right\rfloor\in\Z
\end{equation}
So we can split $B_{\ell}^{++}$ according to the level sets of $\mathcal{Q}_1$ and $\mathcal{Q}_2$ :
\begin{align*}
B_{\ell}^{++} = \bigsqcup_{Q_1,Q_2\in\Z} B_{\ell,Q_1,Q_2}
\end{align*}
where $B_{\ell,Q_1,Q_2}$ is defined as
\[B_{\ell,Q_1,Q_2} := \left\{(\xi_1,q_1),(\xi_2,q_2)\in B_{\ell}^{++},~\mathcal{Q}_1(\xi_1,q_1,\xi_2,q_2) = Q_1,~\mathcal{Q}_2(\xi_1,q_1,\xi_2,q_2) = Q_2 \right\}\]
From definitions (\ref{definition Q1}) and (\ref{definition Q2}), for $(\xi_1,q_1),(\xi_2,q_2)\in B_{\ell,Q_1,Q_2}$, $Q_2$ is such that 
\begin{multline*}
Q_2 = \left\lfloor\frac{M_1M_2}{K_2}\left(\frac{q_1}{\xi_1}-\frac{q_2}{\xi_2}-\sqrt{3}(\xi_1+\xi_2)\right)\right\rfloor \\ \text{ or } Q_2 = \left\lfloor\frac{M_1M_2}{K_2}\left(\frac{q_1}{\xi_1}-\frac{q_2}{\xi_2}-\sqrt{3}(\xi_1+\xi_2)\right)\right\rfloor+1
\end{multline*}
Thus,
\begin{equation}\label{estimation zero omega}
\frac{q_1}{\xi_1}-\frac{q_2}{\xi_2}-\sqrt{3}(\xi_1+\xi_2) \in \left[\frac{K_2}{M_1M_2}(Q_2-1)~;~\frac{K_2}{M_1M_2}(Q_2+1)\right]
\end{equation}
Finally, if ${\displaystyle (\xi_1,q_1),(\xi_2,q_2)\in B_{\ell,Q_1,Q_2}}$ we obtain from (\ref{definition fonction resonance}) and (\ref{estimation zero omega}) that
\begin{multline}\label{calcul Omega Q1Q2}
\Omega(\xi_1,q_1,\xi_2,q_2)=\frac{\xi_1\xi_2}{\xi_1+\xi_2}\frac{K_2}{M_1M_2}\left(Q_2 + \nu\right)\left(\left|\frac{q_1}{\xi_1}-\frac{q_2}{\xi_2}\right|+\sqrt{3}\left|\xi_1+\xi_2\right|\right)
\\= \frac{\xi_1\xi_2K_2}{M_1M_2}\left(Q_2 + \nu\right)\left(2\sqrt{3}+\frac{K_2}{M_1M_2}\frac{Q_2 + \nu}{(\xi_1+\xi_2)}\right)
\end{multline}
with 
\[|\nu|\infeg 1\]
The choice of the parameter ${\displaystyle \frac{K_2}{M_1M_2}}$ in the definitions of $Q_i$ allows us to have ${\displaystyle \frac{q_1}{\xi_1}}$ and ${\displaystyle \frac{q_2}{\xi_2}}$ of the same order, and thus to keep an error $\nu$ of size $O(1)$ in this "change of variables". The measure of the $q_i$-sections of $B_{\ell,Q_1,Q_2}$ is then controled with ${\displaystyle \frac{K_2 M_i}{M_1M_2}\gtrsim 1}$ (as $K_2 \gtrsim M_{max}$), $i=1,2$.\\
Using (\ref{equation localisation Q2}), we get
\[|Q_2|\lesssim \frac{K_3}{K_2}\]
Moreover, by definition
\[\forall (\xi_1,q_1),(\xi_2,q_2)\in B_{\ell},~\ell = \left\lfloor\frac{\Omega(\xi_1,q_1,\xi_2,q_2)}{K_2}\right\rfloor\]
and so a key remark is that if ${\displaystyle (\xi_1,q_1),(\xi_2,q_2)\in B_{\ell,Q_1,Q_2}}$ :
\begin{equation}\label{estimation ell}
\ell = \ell(\xi_1,\xi_2,Q_2) = \left\lfloor\frac{\xi_1\xi_2}{M_1M_2}\left(Q_2+\nu\right)\left(2\sqrt{3}+\frac{K_2}{M_1M_2}\frac{Q_2 + \nu}{\xi_1+\xi_2}\right)\right\rfloor
\end{equation}
Using that $|\xi_i|\sim M_i$, $\xi_1+\xi_2\sim M_3$, $|Q_2|\lesssim K_3/K_2$ and that we assumed $K_3 \infeg 10^{-10}M_1M_2M_3$, we get that 
\[\left|\frac{K_2}{M_1M_2}\frac{Q_2 + \nu}{\xi_1+\xi_2}\right|\infeg 10^{-5}\]
which means that for any fixed $Q_1,Q_2$ there is at most 10 possible values for $\ell$ such that $B_{\ell,Q_,Q_2}$ is non empty.

Let us write $J_{\ell,Q_1,Q_2}$ the contribution of the region $B_{\ell,Q_1,Q_2}$ in the integral $J_{\ell}^{++}$. To control $J_{\ell,Q_1,Q_2}$ we first use Cauchy-Schwarz inequality in $q_1,q_2,\xi_1,\xi_2$ :
\begin{multline*}
J_{\ell,Q_1,Q_2} \lesssim \norme{L^2(B_{Q_1}^1)}{g_1}\norme{L^2(B_{Q_1,Q_2}^2)}{g_2}\\ \cdot \left\{\int_{B_{\ell,Q_1,Q_2}} g_{3,\ell}^2 (\xi_1+\xi_2,q_1+q_2)\dxi_1\dxi_2\dq[1]\dq[2]\right\}^{1/2}
\end{multline*}
where we define
\begin{multline}\label{definiton BQ1}
B_{Q_1}^1 := \left\{(\xi_1,q_1)\in \mathfrak{I}_{M_1}\times \Zl,\right.\\
\left.\sqrt{3}\xi_1^2 + Q_1 \frac{K_2}{M_1M_2}\xi_1 \infeg  q_1 < \sqrt{3}\xi_1^2 + (Q_1+1)\frac{K_2}{M_1M_2}\xi_1\right\}
\end{multline}
and
\begin{multline}\label{definition BQ2}
B_{Q_1,Q_2}^2 := \left\{(\xi_2,q_2)\in \mathfrak{I}_{M_2}\times \Zl,\right.\\ \left.-\sqrt{3}\xi_2^2 + (Q_1-Q_2)\frac{K_2}{M_1M_2}\xi_2\infeg  q_2 < -\sqrt{3}\xi_2^2 + (Q_1-Q_2+1)\frac{K_2}{M_1M_2}\xi_2 \right\}
\end{multline}

Let us start by treating the integral over $B_{\ell,Q_1,Q_2}$. 

If $(\xi_1,q_1),(\xi_2,q_2)\in B_{\ell,Q_1,Q_2}$, we can parametrize the $q_i$-sections with
\[r_1 := q_1 - \lpart{\sqrt{3}\xi_1^2 + Q_1\frac{K_2}{M_1M_2}\xi_1}\in\Zl\]
and
\[r_2 :=  q_2 - \lpart{-\sqrt{3}\xi_2^2 + (Q_1-Q_2)\frac{K_2}{M_1M_2}\xi_2}\in\Zl\]
such that ${\displaystyle 0\infeg r_i \lesssim \frac{K_2 M_i}{M_1M_2}}$.\\
As we assumed $M_2\infeg M_1$, the $q_2$-sections of $B_{\ell,Q_1,Q_2}$ are then smaller than the $q_1$-sections, and thus $0\infeg r_1+r_2 \lesssim r_1$. So if $\xi_1,\xi_2$ are fixed, we obtain :
\begin{multline*}
\int\int \mathbb{1}_{B_{\ell,Q_1,Q_2}}(\xi_1,q_1,\xi_2,q_2) g_{3,\ell}^2 (\xi_1+\xi_2,q_1+q_2)\dq[1]\dq[2] \\= \int \int \mathbb{1}_{[0;K_2/M_2]}(r_1)\mathbb{1}_{[0; K_2/M_1]}(r_2) g_{3,\ell}^2 \left(\xi_1+\xi_2,\right.\\ \left.\lpart{\sqrt{3}\xi_1^2 + Q_1\frac{K_2}{M_1M_2}\xi_1}+r_1+\lpart{-\sqrt{3}\xi_2^2 + (Q_1-Q_2)\frac{K_2}{M_1M_2}\xi_2}+r_2\right)\dr[1]\dr[2]
\\ \lesssim \frac{K_2}{M_1} \int \mathbb{1}_{[0;K_2/M_2]}(|r|) g_{3,\ell}^2\left(\xi_1+\xi_2,\right.\\ \left.\lpart{\sqrt{3}\xi_1^2 + Q_1 \frac{K_2}{M_1M_2}\xi_1-\sqrt{3}\xi_2^2 + (Q_1-Q_2)\frac{K_2}{M_1M_2}\xi_2}+r\right)\dr 
\end{multline*}
The integral over $B_{\ell,Q_1,Q_2}$ is thus controled by
\begin{multline*}
J_{\ell,Q_1,Q_2} \lesssim \left(\frac{K_2}{M_1}\right)^{1/2} \norme{L^2(B_{Q_1}^1)}{g_1}\norme{L^2(B_{Q_1,Q_2}^2)}{g_2}\left\{\int_{\R^2}\int \mathbb{1}_{[0;K_2/M_2}(|r|) g_{3,\ell}^2\left(\xi_1+\xi_2,\right.\right.\\ \left.\left.\lpart{\sqrt{3}\xi_1^2 + Q_1 \frac{K_2}{M_1M_2}\xi_1-\sqrt{3}\xi_2^2 + (Q_1-Q_2)\frac{K_2}{M_1M_2}\xi_2}+r\right)\dxi_1\dxi_2\dr\right\}^{1/2}
\end{multline*}
It remains to sum those contributions : using the previous estimate,
\begin{multline*}
J = \sum_{|\ell|\lesssim K_3/K_2} \sum_{Q_1\in\Z} \sum_{|Q_2|\lesssim K_3/K_2} J_{\ell,Q_1,Q_2}\\
 \lesssim \sum_{Q_1\in\Z} \sum_{|Q_2|\lesssim K_3/K_2} \left(\frac{K_2}{M_1}\right)^{1/2} \norme{L^2(B_{Q_1}^1)}{g_1}\norme{L^2(B_{Q_1,Q_2}^2)}{g_2}\\
 \cdot\left\{\sum_{|\ell|\lesssim K_3/K_2}\int_{\R^2}\int \mathbb{1}_{[0;K_2/M_2]}(|r|) g_{3,\ell}^2\left(\xi_1+\xi_2,\right.\right.\\ \left.\left.\lpart{\sqrt{3}\xi_1^2 + Q_1 \frac{K_2}{M_1M_2}\xi_1-\sqrt{3}\xi_2^2 + (Q_1-Q_2)\frac{K_2}{M_1M_2}\xi_2}+r\right)\dxi_1\dxi_2\dr\right\}^{1/2} 
\end{multline*}
Next, a use of Cauchy-Schwarz inequality in $Q_2$ then $Q_1$ gives
\begin{multline*}
J\lesssim \left(\frac{K_2}{M_1}\right)^{1/2} \left(\sum_{Q_1\in\Z}\norme{L^2(B_{Q_1}^1)}{g_1}^2\right)^{1/2}\left(\sum_{Q_1\in\Z}\sum_{|Q_2|\lesssim K_3/K_2}\norme{L^2(B_{Q_1,Q_2}^2)}{g_2}^2\right)^{1/2}\\ \cdot\left\{\sup_{Q_1}\sum_{|Q_2|\lesssim K_3/K_2}\sum_{|\ell|\lesssim K_3/K_2}\int_{\R^2}\int \mathbb{1}_{[0;K_2/M_2]}(|r|)g_{3,\ell}^2\left(\xi_1+\xi_2,\right.\right. \\  \left.\left.\lpart{\sqrt{3}\xi_1^2 + Q_1 \frac{K_2}{M_1M_2}\xi_1-\sqrt{3}\xi_2^2 + (Q_1-Q_2)\frac{K_2}{M_1M_2}\xi_2}+r\right)\dxi_1\dxi_2\dr\right\}^{1/2} 
\end{multline*}
Now, from the definitions of $B_{Q_1}^1$ (\ref{definiton BQ1}) and $B_{Q_1,Q_2}^2$ (\ref{definition BQ2}) :
\[\left(\sum_{Q_1\in\Z}\norme{L^2(B_{Q_1}^1)}{g_1}^2\right)^{1/2} = \norme{L^2_{\xi_1,q_1}}{g_1} = \normL{2}{f_1}\]
and
\begin{multline*}
\left(\sum_{Q_1\in\Z}\sum_{|Q_2|\lesssim K_3/K_2}\norme{L^2(B_{Q_1,Q_2}^2)}{g_2}^2\right)^{1/2} \lesssim \left(\frac{K_3}{K_2}\right)^{1/2}\left(\sup_{Q_2}\sum_{Q_1\in\Z}\norme{L^2(B_{Q_1,Q_2})}{g_2}^2\right)^{1/2}\\ = \left(\frac{K_3}{K_2}\right)^{1/2}\norme{L^2_{\xi_2,q_2}}{g_2} = \left(\frac{K_3}{K_2}\right)^{1/2}\normL{2}{f_2}
\end{multline*}
To conclude, it suffices to prove
\begin{multline}\label{estimation finale basse modulation}
\sup_{Q_1}\sum_{|Q_2|\lesssim K_3/K_2}\sum_{|\ell|\lesssim K_3/K_2}\int_{\R^2}\int \mathbb{1}_{[0;K_2/M_2]}(|r|)g_{3,\ell}^2\left(\xi_1+\xi_2,\right.\\ \left.\lpart{\sqrt{3}\xi_1^2 + Q_1 \frac{K_2}{M_1M_2}\xi_1-\sqrt{3}\xi_2^2 + (Q_1-Q_2)\frac{K_2}{M_1M_2}\xi_2}+r\right)\dxi_1\dxi_2\dr \\ \lesssim \frac{K_2}{M_2M_3}\normL{2}{f_3}
\end{multline}
Here, we can see the interest of splitting $f_3^{\#}$ over $\ell$ : the sum over $\ell$ is controled by the sum over $Q_2$ thanks to (\ref{estimation ell}), whereas a direct estimate on this sum would lose an additional factor $K_3/K_2$ (or in other words, when $\xi_1,\xi_2,Q_2$ are fixed, we do not have the contribution of the full $L^2$ norm of $f_3^{\#}$ in the $\theta$ variable, which allows us to sum those contributions without loosing an additionnal factor).

We begin the proof of (\ref{estimation finale basse modulation}) with the change of variables $\xi_1 \mapsto \xi := \xi_1 + \xi_2$ : the left-hand side now reads
\begin{multline*}
\sup_{Q_1}\sum_{|Q_2|\lesssim K_3/K_2}\sum_{|\ell|\lesssim K_3/K_2}\int_{\R^2}\int \mathbb{1}_{[0;K_2/M_2]}(|r|)g_{3,\ell}^2\left(\xi,\right.\\ \left.\left[\sqrt{3}\xi(\xi-2\xi_2) + Q_1\frac{K_2}{M_1M_2}\xi-Q_2\frac{K_2}{M_1M_2}\xi_2\right]_{\lambda}+r\right)\dxi_2\dxi\dr
\end{multline*}
Now, using (\ref{estimation ell}) and the definition of $g_{3,\ell}$, we have that for fixed $\xi,Q_1,\xi_2,Q_2,r$ :
\begin{multline*}
\sum_{|\ell|\lesssim K_3/K_2} g_{3,\ell}^2\left(\xi,\left[\sqrt{3}\xi(\xi-2\xi_2) + Q_1\frac{K_2}{M_1M_2}\xi-Q_2\frac{K_2}{M_1M_2}\xi_2\right]_{\lambda}+r\right) \\ \lesssim \int_{\R} \mathbb{1}\left(\theta \in \left[\frac{(\xi-\xi_2)\xi_2K_2}{M_1M_2}\left(Q_2-2\right)\left(2\sqrt{3}+\frac{K_2}{M_1M_2}\frac{Q_2 -2}{\xi}\right)~;\right.\right.\\\left.\left.\frac{(\xi-\xi_2)\xi_2K_2}{M_1M_2}\left(Q_2+2\right)\left(2\sqrt{3}+\frac{K_2}{M_1M_2}\frac{Q_2 + 2}{\xi}\right)\right]\right)\\ \cdot (f_3^{\#})^2\left(\theta,\xi,\left[\sqrt{3}\xi(\xi-2\xi_2) + Q_1\frac{K_2}{M_1M_2}\xi-Q_2\frac{K_2}{M_1M_2}\xi_2\right]_{\lambda}+r\right)\dtheta
\end{multline*}

Now, fixing only $\xi$, and $Q_1$, integrating in $\xi_2$ and $r$ and summing over $Q_2$, we can write the previous term as
\begin{multline*}
\sum_{|Q_2|\lesssim K_3/K_2}\int_{\mathfrak{I}_{M_2}}\int \mathbb{1}_{[0;K_2/M_2]}(|r|)\int_{\R} \mathbb{1}\left\{\theta\in I(\xi,\xi_2,Q_2)\right\}\\\cdot (f_3^{\#})^2\left(\theta,\xi,\lpart{\varphi(\xi,Q_1,\xi_2,Q_2)}+r\right)\dtheta\dr\dxi_2
\end{multline*}
where the interval $I(\xi,\xi_2,Q_2)$ is defined as
\begin{multline*}
I(\xi,\xi_2,Q_2) := \left[\frac{(\xi-\xi_2)\xi_2K_2}{M_1M_2}\left(Q_2-2\right)\left(2\sqrt{3}+\frac{K_2}{M_1M_2}\frac{Q_2 -2}{\xi}\right)~;\right. \\ \left.\frac{(\xi-\xi_2)\xi_2K_2}{M_1M_2}\left(Q_2+2\right)\left(2\sqrt{3}+\frac{K_2}{M_1M_2}\frac{Q_2 + 2}{\xi}\right)\right]
\end{multline*}
and the function $\varphi$ is defined as
\[\varphi(\xi,Q_1,\xi_2,Q_2):=\sqrt{3}\xi(\xi-2\xi_2) + Q_1\frac{K_2}{M_1M_2}\xi-Q_2\frac{K_2}{M_1M_2}\xi_2\]
In order to recover the $L^2$ norm of $f_3^{\#}$ in $q$, we decompose the previous term in
\begin{multline*}
\lambda\int_{\Zl}\sum_{|Q_2|\lesssim K_3/K_2}\int\int_{\Lambda_n(\xi,Q_1,Q_2)}\int_{\R} \mathbb{1}\left\{\theta\in I(\xi,\xi_2,Q_2)\right\} \\ \cdot(f_3^{\#})^2\left(\theta,\xi,n\right)
\dtheta(\dxi_2\dr)\dn
\end{multline*}
where the set $\Lambda_n(\xi,Q_1,Q_2)\subset \R\times \Zl$ for $n\in \Zl$ is defined as
\begin{multline*}
\Lambda_n(\xi,Q_1,Q_2) := \left\{(\xi_2,r)\in \mathfrak{I}_{M_2}\times [-\frac{K_2}{M_2};\frac{K_2}{M_2}],\right.\\
\left.\varphi(\xi,Q_1,\xi_2,Q_2) \in [n-r;n+\lambda^{-1}-r[\right\}
\end{multline*}
First, using the localizations $|\xi|\sim M_3$, $|\xi_2|\sim M_2$ and $|\xi-\xi_2|\sim M_1$ and the conditions $|Q_2|\lesssim K_3/K_2$ and $K_3 \infeg 10^{-10} M_1M_2M_3$, we have for any $\xi,\xi_2,Q_2$ :
\[I(\xi,\xi_2,Q_2) \subset \left\{|\theta|\in\left[c^{-1}K_2(Q_2-2),cK_2(Q_2+2)\right]\right\}\]
for an absolute constant $c>0$.

Thus we are left with estimating
\begin{multline}\label{equation dernier terme f3}
\lambda\int_{\Zl}\sum_{|Q_2|\lesssim K_3/K_2}\int_{\R}\mathbb{1}\left\{|\theta|\in [c^{-1}K_2(Q_2-2);cK_2(Q_2+2)]\right\}\\ \cdot\left|\Lambda_n(\xi_,Q_1,Q_2)\right|(f_3^{\#})^2(\theta,\xi,n)\dtheta\dn
\end{multline}

We trivially control the measure of the $r$-sections of $\Lambda_n$ with ${\displaystyle 2\frac{K_2}{M_2}}$. It remains to estimate the measure of the projection of $\Lambda_n$ on the $\xi_2$ axis, uniformly in $n,\xi,Q_1$ and $Q_2$. To do so, we are going to make a good use of lemma~\ref{lemme mesure ensemble avec fonction}. We are then left to compute ${\displaystyle \frac{\partial \varphi}{\partial \xi_2}}$ :
\[\frac{\partial\varphi}{\partial\xi_2} = -2\sqrt{3}\xi - Q_2\frac{K_2}{M_1M_2}\]
Now, as $|Q_2|\lesssim K_3/K_2$ and $K_3\infeg 10^{-10}M_1M_2M_3$, we obtain that
\[\left|\frac{\partial \varphi}{\partial\xi_2}\right| \sim 2\sqrt{3}|\xi|\sim M_3\]

So, applying (\ref{estimation mesure cas R}), we get that the projection of $\Lambda_n(\xi,Q_1,Q_2)$ on the $\xi_2$ axis is controled by ${\displaystyle \lambda^{-1}M_3^{-1}}$. A use of lemma~\ref{lemme mesure ensemble avec projections et sections} finally leads to
\[\left|\Lambda_n(\xi,Q_1,Q_2)\right|\lesssim \lambda^{-1}\frac{K_2}{M_2M_3}\]
uniformly in $n,\xi,Q_1,Q_2$.

Getting back to (\ref{equation dernier terme f3}), we have
\begin{multline*}
(\ref{equation dernier terme f3})\lesssim \frac{K_2}{M_2M_3}\int_{\Zl}\int_{\R} \left(\sum_{|Q_2|\lesssim K_3/K_2}\mathbb{1}\left\{\theta\in [c^{-1}K_2(Q_2-2);cK_2(Q_2+2)]\right\}\right)\\ \cdot(f_3^{\#})^2(\theta,\xi,n)\dtheta\dn \\ \lesssim \frac{K_2}{M_2M_3}\int_{\Zl}\int_{\R} \mathbb{1}\left\{|\theta|\in I_{\infeg K_3}\right\}(f_3^{\#})^2(\theta,\xi,n)\dtheta\dn
\end{multline*} 
Now, neglecting the $\theta$ localization and integrating in $\xi$, we finally get (\ref{estimation finale basse modulation}), which completes the proof of the proposition.
\end{proof}
\begin{remarque}
In the case $(x,y)\in\T^2$ (\cite[Lemma 3.1]{Zhang2015}), we can still use lemma~\ref{lemme mesure ensemble avec fonction}, but since $\xi_2\in\Z$ in that case, we have to use (\ref{estimation mesure cas Z}) instead of (\ref{estimation mesure cas R}), and thus we have the rougher estimate
\[\left|\Lambda_n\right| \lesssim \frac{K_2}{M_2}\left(1+M_3^{-1}\right)\lesssim \frac{K_2}{M_2}\]
as $M_i\supeg 1$ for localized functions on $\T^2$. This is the main obstacle to recover the same estimate as in $\R^2$ or $\R\times \T$, and the cause of the logarithmic divergence in the energy estimate.
\end{remarque}

The following corollary summarizes the estimates on ${\displaystyle \int f_1\star f_2 \cdot f_3}$ according to the relations between the $M$'s and the $K$'s :
\begin{corollaire}\label{proposition estimation bilineaire dyadique synthese}
Let $f_i\in L^2(\R^2\times\Zl)$ be positive functions with the support condition $\supp f_i \subset D_{\lambda,M_i,\infeg K_i}$, $i=1,2,3$. We assume $K_{med}\supeg M_{max} \supeg 1$.
\begin{itemize}
\item[(a)] If $K_{max}\infeg 10^{-10}M_1M_2M_3$ then
\begin{equation}\label{estimation forme trilineaire basse modulation}
\int_{\R^2\times\Zl}f_1\star f_2 \cdot f_3 \lesssim \left(M_{min}\et M_{min}^{-1}\right)^{1/2}M_{max}^{-1}\prod_{i=1}^3K_i^{1/2}\normL{2}{f_i}
\end{equation}
\item[(b)] If $K_{max}\gtrsim M_1M_2M_3$ and $(M_i,K_i)=(M_{min},K_{max})$ for an $i\in\{1,2,3\}$ then
\begin{equation}\label{estimation forme trilineaire grande modulation  cas pire}
\int_{\R^2\times\Zl}f_1\star f_2 \cdot f_3 \lesssim (1\et M_{min})^{1/4}M_{max}^{-1}\prod_{i=1}^3K_i^{1/2}\normL{2}{f_i}
\end{equation}
\item[(c)] If $K_{max}\gtrsim M_1M_2M_3$ but $(M_i,K_i)\neq (M_{min},K_{max})$ for any $i=1,2,3$ then
\begin{equation}\label{estimation forme trilineaire grande modulation}
\int_{\R^2\times\Zl}f_1\star f_2 \cdot f_3 \lesssim (1\ou M_{min})^{1/4}M_{max}^{-5/4}\prod_{i=1}^3K_i^{1/2}\normL{2}{f_i}
\end{equation}
\end{itemize}
\end{corollaire}
\begin{proof}
Using the symmetry property (\ref{equation symétrie I}), we can assume $K_3=K_{max}$. Note that, since $M_{max}\supeg 1$ and in order for the integral to be non zero, we must have $(1\ou M_{min})\lesssim M_{med}\sim M_{max}$. Then we treat the different cases. \\
\textbf{Case (a) :} This has already been proven in the previous proposition in the case $M_{min}\supeg 1$.\\
If $M_{min}\infeg 1$, (\ref{estimation forme trilineaire basse modulation}) follows from (\ref{estimation Strichartz basse modulation}), since $K_3=K_{max}\supeg (K_1\ou K_2)\supeg M_{max}$.\\
\textbf{Case (b) :} $M_3 = M_{min}$. Then, if $M_3\supeg 1$, (\ref{estimation forme trilineaire grande modulation  cas pire}) follows from (\ref{estimation Strichartz grossiere}) since 
\[\crochet{(K_1\ou K_2)^{1/4}(M_1\et M_2)^{1/4}} \lesssim (K_1\ou K_2)^{1/2}\] as $(K_1\ou K_2)\gtrsim M_{max}$.\\
If $M_3\infeg 1$, since this is symmetrical in $f_1$ and $f_2$ we may assume that $K_1=K_1\et K_2$. Then we apply (\ref{estimation Strichartz grossiere}) with $f_1$ and $f_3$ to get (\ref{estimation forme trilineaire grande modulation  cas pire}) since $K_3^{-1/4}\lesssim M_{min}^{-1/4}M_{max}^{-1/2}$ and $K_2^{-1/2}=K_{med}^{-1/2}\lesssim M_{max}^{-1/2}$.\\
\textbf{Case (c) :} Again, (\ref{estimation forme trilineaire grande modulation}) follows from (\ref{estimation Strichartz grossiere}) since \[\crochet{(K_1\ou K_2)^{1/4}(M_1\et M_2)^{1/4}} \lesssim (K_1\ou K_2)^{1/2}M_{max}^{-1/4}(1\ou M_{min})^{1/4}\]
\end{proof}

We conclude this section by stating another estimate which takes into account the weight in the definition of the energy space :\newpage
\begin{proposition}
Let $f_i\in L^2(\R^2\times\Zl)$ be positive functions with the support condition $\supp f_i \subset D_{\lambda,M_i,K_i}$, $i=1,2$ for $M_3>0,~K_3\supeg 1$. Then
\begin{equation}\label{estimation convolution avec poids}
\normL{2}{\mathbb{1}_{D_{\lambda,M_3,K_3}} \cdot f_1\star f_2}\lesssim (1\ou M_1)M_{min}^{1/2} K_{min}^{1/2} \normL{2}{p\cdot f_1}\normL{2}{f_2}
\end{equation}
\end{proposition}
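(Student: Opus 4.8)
The plan is to reduce the weighted estimate \eqref{estimation convolution avec poids} to the unweighted Strichartz bound \eqref{estimation Strichartz grossiere} of Proposition~\ref{estimation Strichartz localisee} by absorbing the weight $p(\xi_1,q_1)$ into the frequency parameters. Recall from \eqref{propriete p} that $|\crochet{\xi_1}p(\xi_1,q_1)|^2 = 1+\xi_1^2 + q_1^2/\xi_1^2$, so that on the support of $f_1$, where $|\xi_1|\sim M_1$, we have $p(\xi_1,q_1) \lesssim 1 + |q_1|/(\crochet{\xi_1}|\xi_1|) \lesssim 1 + |q_1|/((1\ou M_1)M_1)$. The idea is that the factor $|q_1|$ can be converted, on the resonant set, into something controlled by modulations and by $M_1M_2M_3$, which is exactly what produces the gain allowing the prefactor $(1\ou M_1)$ rather than something worse.

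First I would split $f_1 = \sum_{L} f_{1,L}$ according to a dyadic decomposition $|q_1/\xi_1| \sim L$ (together with the piece $|q_1/\xi_1|\lesssim 1$), so that on each piece $p\cdot f_1 \sim \crochet{\crochet{\xi_1}^{-1}L}\, f_{1,L}$ and it suffices to prove $\normL{2}{\mathbb{1}_{D_{\lambda,M_3,K_3}}\cdot f_{1,L}\star f_2} \lesssim (1\ou M_1)M_{min}^{1/2}K_{min}^{1/2}\crochet{\crochet{\xi_1}^{-1}L}^{-1}\normL{2}{f_{1,L}}\normL{2}{f_2}$ with a gain summable in $L$. For the range $L \lesssim (1\ou M_1)$ the weight is $O(1)$ and the claim is just \eqref{estimation Strichartz grossiere} with $(K_1\ou K_2)^{1/4}(M_1\et M_2)^{1/4}\lesssim M_1^{1/4}K_{min}^{1/4}\ldots$; one checks that the resulting power of $M_1$ is at most $(1\ou M_1)$ as claimed (crudely bounding $\crochet{\ldots}$ by $K_2^{1/4}$ when needed). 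The substantive range is $L \gg (1\ou M_1)$: there the weight is $\sim L/(1\ou M_1)$ and the only way to win is to observe that large $|q_1|$ forces, via the definition \eqref{definition fonction resonance} of the resonant function $\Omega$ and the modulation localizations $\crochet{\sigma}\sim K_i$, a constraint on $\tau-\omega(\zeta)-\Omega$ that shrinks the volume of the set $B_{\tau,\xi,q}$ appearing in the proof of Proposition~\ref{estimation Strichartz localisee}. Concretely, redoing that proof with the extra information that $|q_1/\xi_1|\sim L$ means the second-order polynomial in $q_1$ whose sublevel set we measure via \eqref{estimation mesure parabole Z} is in addition restricted to an annulus of size controlled by $LM_1$, so one either gains directly or the constraint $|\Omega|\lesssim K_{max}$ together with $|q_1/\xi_1|\sim L \gg M_1 \gtrsim |\xi_1+\xi_2|$ is essentially empty, i.e. forces $L \lesssim (K_{max}/(M_1M_2M_3))^{1/2}\cdot\text{(something)}$; one keeps the $L^{-1}$-type gain from the weight and loses at most a power of $K$ that is absorbed by $K_{min}^{1/2}$ and the factor $(1\ou M_1)$.

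The key steps in order are: (i) dyadically decompose in $L := |q_1/\xi_1|$ and reduce to a per-$L$ estimate with a $\crochet{\crochet{\xi_1}^{-1}L}^{-1}$ gain; (ii) for $L\lesssim(1\ou M_1)$, invoke \eqref{estimation Strichartz grossiere} directly and verify the power of $M_1$ is $\leq (1\ou M_1)$; (iii) for $L\gg(1\ou M_1)$, re-examine the set $B_{\tau,\xi,q}$ from the proof of Proposition~\ref{estimation Strichartz localisee}, now intersected with $\{|q_1/\xi_1|\sim L\}$, and use Lemma~\ref{lemme mesure ensemble avec projections et sections} together with \eqref{estimation mesure parabole Z} (or \eqref{estimation mesure cas Z} on the relevant derivative of $\Omega$) to get a volume bound carrying an extra $L^{-1}$-type factor; (iv) take square roots, put the $K_1\et K_2$ back, and sum the geometric series in $L$.

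The main obstacle I anticipate is step (iii): making the heuristic "$|q_1|$ large $\Rightarrow$ resonant set small" quantitative. One must be careful that in the regime where $|\Omega|$ is not small — i.e. $K_{max}\gtrsim M_1M_2M_3$, which is allowed here since we only assume $\supp f_i\subset D_{\lambda,M_i,K_i}$ with no low-modulation hypothesis — the parabola-measure estimate \eqref{estimation mesure parabole Z} only gives $\crochet{(K_1\ou K_2)^{1/2}(M_1\et M_2)^{1/2}}$ for the $q_1$-section, and one needs the annulus restriction from $|q_1/\xi_1|\sim L$ to genuinely cut this down by a factor comparable to $(1\ou M_1)/L$ (up to harmless constants), rather than, say, splitting the annulus into too many sub-intervals and losing the gain. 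Handling the borderline interaction of this with the $\lambda$-uniformity (the $\crochet{\cdot}$ rounding in \eqref{estimation mesure parabole Z} and \eqref{estimation mesure cas Z}) is the delicate bookkeeping point, but since $M_i\supeg$ the relevant thresholds force $K_2 M_i/(M_1M_2)\gtrsim 1$ as in the proof of Proposition~\ref{proposition estimation bilineaire dyadique basse modulation}, the rounding is absorbed.
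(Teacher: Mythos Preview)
Your approach is on the wrong track, and step~(iii) does not work. The paper's proof is far simpler than what you propose and uses a different mechanism for the gain.

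The paper does decompose $f_1$ dyadically in the $y$-frequency, writing $f_1 = \sum_{L} f_1^{L}$ with $f_1^{L}$ supported in $|q_1|\sim L$ (starting at $L = M_1^2$ when $M_1\ge 1$, and at $L=1$ when $M_1\le 1$). But then it applies the \emph{elementary} trilinear bound \eqref{estimation forme trilineaire triviale}, not the Strichartz estimate. The point of \eqref{estimation forme trilineaire triviale} is that it carries a factor $|I|_{min}^{1/2}$ coming from the $q$-interval localization, so each piece contributes $L^{1/2}M_{min}^{1/2}K_{min}^{1/2}\normL{2}{f_1^{L}}\normL{2}{f_2}$. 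On the support of $f_1^{L}$ one has $p \sim L/((1\ou M_1)M_1)$, hence $L^{1/2}\normL{2}{f_1^{L}} \lesssim (1\ou M_1)M_1\, L^{-1/2}\normL{2}{p\cdot f_1^{L}}$; Cauchy--Schwarz in $L$ then sums to $(1\ou M_1)\normL{2}{p\cdot f_1}$. No resonant-set analysis is needed.

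Your step~(iii) attempts to obtain an $L^{-1}$-type gain by intersecting the $q_1$-sublevel set in the proof of Proposition~\ref{estimation Strichartz localisee} with the annulus $|q_1/\xi_1|\sim L$. This does not give the gain you claim: intersecting with $\{|q_1|\sim LM_1\}$ can only cap the measure at $\sim LM_1$, which goes in the wrong direction for large $L$; it does not shrink the parabola-measure bound by a factor $(1\ou M_1)/L$. Your fallback heuristic --- that $|\Omega|\lesssim K_{max}$ together with $|q_1/\xi_1|\sim L$ forces $L$ to be bounded --- also fails, since in $\Omega$ only the difference $q_1/\xi_1 - q_2/\xi_2$ appears and both quotients can be simultaneously large. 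The factor $(1\ou M_1)$ in \eqref{estimation convolution avec poids} does not come from the resonant function; it comes from trading the $q$-localization size against the weight $p$ via \eqref{estimation forme trilineaire triviale}, which is exactly the lemma you never used.
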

\begin{proof}
We follow \cite[Corollaire 5.3 (b)\&(c)]{IonescuKenigTataru2008} : we split the cases $M_1\lesssim 1$ or $M_1\gtrsim 1$ and we decompose $f_1$ on its $y$ frequency in order to estimate ${\displaystyle p(\xi,q)\sim 1+ \frac{|q|}{|\xi|\crochet{\xi}}}$.\\
\textbf{Case 1 :}If $M_1\supeg 1$.\\
We then have ${\displaystyle p(\xi,q)\sim 1+ \frac{|q|}{|\xi|^2}}$.
We split
\[f_1= \sum_{L\supeg M_1^2}f_1^{L} = \mathbb{1}_{I_{\infeg M_1^2}}(q)f_1 + \sum_{L>M_1^2}\mathbb{1}_{I_L}(q)f_1\]
such that
\[\normL{2}{\mathbb{1}_{D_{\lambda,M_3,K_3}} \cdot f_1\star f_2} \lesssim \sum_{L\supeg M_1^2}L^{1/2}M_{min}^{1/2}K_{min}^{1/2}\normL{2}{f_1^{L}}\normL{2}{f_2}\]
after using (\ref{estimation forme trilineaire triviale}). Now, for $L=M_1^2$ we have $L^{-1/2}p \sim M_1^{-1} (1+ M_1^{-2}|q|) \gtrsim M_1^{-1} = L^{1/2}M_1^{-2}$, and for $L>M_1^2$ we also have $L^{-1/2}p\sim L^{-1/2}(1+LM_1^{-2}) \gtrsim L^{1/2}M_1^{-2}$. Thus, using Cauchy-Schwarz inequality in $L$, we obtain
\begin{multline*}
\normL{2}{\mathbb{1}_{D_{\lambda,M_3,K_3}} \cdot f_1\star f_2} \lesssim M_{min}^{1/2}K_{min}^{1/2}\normL{2}{f_2}\sum_{L\supeg M_1^2}L^{-1/2}M_1^2\normL{2}{p \cdot f_1^L}\\
\lesssim M_1^2M_{min}^{1/2}K_{min}^{1/2}\cdot M_1^{-1}\normL{2}{p\cdot f_1}
\end{multline*}
\textbf{Case 2 :} If $M_1 \infeg 1$.\\
This time, we split the $y$ frequency for $L\supeg 1$ since for $M_1 <\lambda^{-1}$ there is just the frequency $q=0$ :
\[f_1 = \sum_{L\supeg 1}f_1^L = \mathbb{1}_{I_{\infeg 1}}(q)f_1 + \sum_{L>1}\mathbb{1}_{I_L}(q)f_1\]
For $L=1$, we have $L^{-1/2}p \gtrsim 1 = L^{1/2}$, and for $L>1$, we also have $L^{-1/2}p \gtrsim L^{1/2}M_1^{-1}\gtrsim L^{1/2}$. Thus, using again (\ref{estimation forme trilineaire triviale}) and then Cauchy-Schwarz inequality in $L$, we only get in that case
\begin{multline*}
\normL{2}{\mathbb{1}_{D_{\lambda,M_3,K_3}} \cdot f_1\star f_2} \lesssim \sum_{L\supeg 1}L^{1/2}M_{min}^{1/2}K_{min}^{1/2}\normL{2}{f_1^{L}}\normL{2}{f_2}\\
\lesssim M_{min}^{1/2}K_{min}^{1/2}\sum_{L\supeg1}L^{-1/2}\normL{2}{p \cdot f_1^{L}}\normL{2}{f_2}\\
\lesssim M_{min}^{1/2}K_{min}^{1/2}\normL{2}{p \cdot f_1}\normL{2}{f_2}
\end{multline*}
\end{proof}

\section{Bilinear estimates}\label{section estimation bilineaire}
The aim of this section is to prove (\ref{estimation bilineaire}) and (\ref{estimation bilineaire difference}). We will treat separately the interactions $Low \times High \rightarrow High$, $High \times High \rightarrow Low$ and $Low \times Low \rightarrow Low$. Those are the only possible interactions, since for functions $f_i$ localized in $|\xi_i|\sim M_i$, we have \[ \int f_1\star f_2 \cdot f_3 \neq 0 \Rightarrow M_{min} \lesssim M_{med}\sim M_{max}\]
\subsection{For the equation}
We first prove (\ref{estimation bilineaire}).
\begin{lemme}[$Low \times High \rightarrow High$]\label{lemme lhh}
Let $M_1,M_2,M_3\in 2^{\Z}$ with $(1\ou M_1) \lesssim M_2 \sim M_3$ and $b_1\in [0;1/2[$. Then for $u_{M_1}\in \nl{M_1}^{0}$ and $v_{M_2}\in\nl{M_2}^{0}$, we have
\begin{equation}\label{estimation bilineaire lhh}
\norme{\nl{M_3}^{b_1}}{P_{M_3}\drx\left(u_{M_1}\cdot v_{M_2}\right)}\lesssim M_1^{1/2}\norme{\fl{M_1}^{0}}{u_{M_1}}\norme{\fl{M_2}^{0}}{v_{M_2}}
\end{equation}
\end{lemme}
\begin{proof}
By definition, the left-hand side of (\ref{estimation bilineaire lhh}) is
\[\sup_{t_{M_3} \in \R} \norme{\X{M_3}^{b_1}}{(\tau-\omega +iM_3)^{-1}p\cdot \F\left\{\chi_{M_3^{-1}}(t-t_{M_3})P_{M_3}\drx\left(u_{M_1}\cdot v_{M_2}\right)\right\}}\]
Let $\gamma : \R\rightarrow [0;1]$ be a smooth partition of unity, satisfying $\supp\gamma\subset [-1;1]$ and \[\forall x\in\R,~ \sum_{m\in\Z}\gamma(x-m) = 1\]
Since $(1\ou M_1)\lesssim M_2\sim M_3$, we have
\begin{multline*}
\chi_{M_3^{-1}}(t-t_{M_3}) = \sum_{|m|,|n|\infeg 100}\chi_{M_3^{-1}}(t-t_{M_3})\gamma_{M_2^{-1}}(t-t_{M_3}-M_2^{-1}m)\\
\cdot\gamma_{(1\ou M_1)^{-1}}(t-t_{M_3}-M_2^{-1}m-(1\ou M_1)^{-1}n)
\end{multline*}
Since we take the supremum in $m$ and $n$, without loss of generality, we can assume $m=n=0$. Thus, if we define
\begin{multline}\label{definition f1 estimation bilineaire lhh}
f_1^{(1\ou M_1)} := \chi_{(1\ou M_1)}(\tau-\omega) \F\left(\gamma_{(1\ou M_1)^{-1}}(t-t_{M_3})u_{M_1}\right) \text{ and }\\f_1^{K_1} := \rho_{K_1}(\tau-\omega) \F\left(\gamma_{(1\ou M_1)^{-1}}(t-t_{M_3})u_{M_1}\right),\text{ if }K_1>(1\ou M_1)
\end{multline}
and as well for $v$
\begin{multline}\label{definition f2 estimation bilineaire lhh}
f_2^{M_2} := \chi_{M_2}(\tau-\omega) \F\left(\gamma_{M_2^{-1}}(t-t_{M_3})v_{M_2}\right) \text{ and }\\f_2^{K_2} := \rho_{K_2}(\tau-\omega) \F\left(\gamma_{M_2^{-1}}(t-t_{M_3})v_{M_2}\right),\text{ if }K_2>M_2
\end{multline}
by splitting the term in the left-hand side according to its modulations, we then get
\begin{multline*}
\norme{\nl{M_3}^{b_1}}{P_{M_3}\drx\left(u_{M_1}\cdot v_{M_2}\right)} \\
\lesssim  \sup_{t_{M_3} \in \R} \sum_{K_1\supeg (1\ou M_1)}\sum_{K_2\supeg M_2}\norme{\X{M_3}}{(\tau-\omega +iM_3)^{-1}p\cdot \F\left\{P_{M_3}\drx\F^{-1}\left(f_1^{K_1}\star f_2^{K_2}\right)\right\}} \\
= \sup_{t_{M_3} \in \R} \sum_{K_1\supeg (1\ou M_1)}\sum_{K_2\supeg M_2}\sum_{K_3\supeg 1}K_3^{1/2}\beta_{M_3,K_3}^{b_1}\\
\cdot\normL{2}{(\tau-\omega +iM_3)^{-1}p\cdot \rho_{K_3}(\tau-\omega)\F\left\{P_{M_3}\drx\F^{-1}\left(f_1^{K_1}\star f_2^{K_2}\right)\right\}}
\end{multline*}
Let us start with the modulations $K_3<M_3$ : the first factor in the previous norm allows us to gain a factor $(M_3\ou K_3)^{-1}$ which makes up for the derivative, thus
\begin{multline*}
\sum_{1\infeg K_3<M_3}K_3^{1/2}\normL{2}{(\tau-\omega +iM_3)^{-1}p\cdot \rho_{K_3}(\tau-\omega)\F\left\{P_{M_3}\drx\F^{-1}\left(f_1^{K_1}\star f_2^{K_2}\right)\right\}}\\
\lesssim \sum_{1\infeg K_3<M_3}K_3^{1/2}\normL{2}{ \mathbb{1}_{D_{\lambda,M_3,\infeg M_3}}\cdot p\cdot f_1^{K_1}\star f_2^{K_2}}
\end{multline*}
and using that ${\displaystyle \sum_{1\infeg K_3 < M_3}K_3^{1/2}\lesssim M_3^{1/2}}$ we get that the previous sum is controlled with
\[M_3^{1/2} \normL{2}{\mathbb{1}_{D_{\lambda,M_3,\infeg M_3}}\cdot p \cdot f_1^{K_1}\star f_2^{K_2}}\]
Proceeding as well for the modulations $K_3\supeg M_3$ and choosing a factor $K_3^{-1}$ instead of $M_3^{-1}$, we get now
\begin{multline*}
\sum_{ K_3\supeg M_3}K_3^{1/2}\beta_{M_3,K_3}^{b_1}\normL{2}{(\tau-\omega +iM_3)^{-1}p\cdot \rho_{K_3}(\tau-\omega)\F\left\{P_{M_3}\drx\F^{-1}\left(f_1^{K_1}\star f_2^{K_2}\right)\right\}}\\
\lesssim M_3\sum_{ K_3\supeg M_3}K_3^{-1/2}\beta_{M_3,K_3}^{b_1}\normL{2}{ \mathbb{1}_{D_{\lambda,M_3,\infeg K_3}}\cdot p\cdot f_1^{K_1}\star f_2^{K_2}}\\
\end{multline*}
In particular, the first sum over the modulations $K_3 < M_3$ is controlled by the first term in the second sum over the modulations $K_3 \supeg M_3$.

Finally, it suffices to show that $\forall K_i\supeg (1\ou M_i)$, $i=1,2$,
\begin{multline}\label{estimation bilineaire lhh cle}
M_3\sum_{K_3\supeg M_3}K_3^{-1/2}\beta_{M_3,K_3}^{b_1}\normL{2}{\mathbb{1}_{D_{\lambda,M_3,\infeg K_3}} \cdot p \cdot f_1^{K_1}\star f_2^{K_2}}\\ \lesssim M_1^{1/2}\left(K_1^{1/2} \normL{2}{p\cdot f_1^{K_1}}\right)\left(K_2^{1/2}\normL{2}{p\cdot f_2^{K_2}}\right)
\end{multline}
Indeed, combining all the previous estimates, summing over $K_i\supeg (1\ou M_i)$ and using the definitions of $f_i^{K_i}$ (\ref{definition f1 estimation bilineaire lhh}), (\ref{definition f2 estimation bilineaire lhh}), the left-hand side of (\ref{estimation bilineaire lhh}) is controled by
\[M_1^{1/2}\left(\sum_{K_1\supeg (1\ou M_1)}K_1^{1/2} \normL{2}{p\cdot f_1^{K_1}}\right)\left(\sum_{K_2\supeg M_2}K_2^{1/2}\normL{2}{p\cdot f_2^{K_2}}\right)\]
The first sum is
\begin{multline*}
(1\ou M_1)^{1/2}\normL{2}{\chi_{(1\ou M_1)}(\tau-\omega) \F\left(\gamma_{(1\ou M_1)^{-1}}(t-t_{M_3})u_{M_1}\right)}\\
+\sum_{K_1> (1\ou M_1)}K_1^{1/2}\normL{2}{p\cdot \rho_{K_1}(\tau-\omega) \F\left\{\gamma_{(1\ou M_1)^{-1}}(t-t_{M_3})u_{M_1}\right\}}
\end{multline*}
As $\chi\equiv 1$ on $\supp \gamma$, we have 
\[\gamma_{(1\ou M_1)^{-1}}(t-t_{M_3}) = \gamma_{(1\ou M_1)^{-1}}(t-t_{M_3})\chi_{(1\ou M_1)^{-1}}(t-t_{M_3})\]
so this term is controlled by ${\displaystyle \norme{\fl{M_1}^{0}}{u_{M_1}}}$ thanks to (\ref{propriete XM}) and (\ref{estimation cle localisation XM}) with \[f = \F\left\{\chi_{(1\ou M_1)^{-1}}(t-t_{M_3})u_{M_1}\right\}\] and $K_0=(1\ou M_1)$.

We can similarly bound the second sum by ${\displaystyle \norme{\fl{M_2}^{0}}{v_{M_2}}}$.

~~\\

For now, we have established some estimates on expressions in the form\\ ${\displaystyle \int f_1\star f_2 \cdot f_3}$. Thus we first have to express $p\cdot f_1\star f_2$ according to $(p\cdot f_1)$ and $(p\cdot f_2)$. So, using the localizations in $|\xi_i|$ and the relation between the $M_i$, we can estimate
\begin{multline}\label{estimation bilineaire lhh poids 1}
p(\xi_1+\xi_2,q_1+q_2) \sim 1+\frac{|q_1+q_2|}{(\xi_1+\xi_2)^2}\\
\lesssim 1+\frac{|q_2|}{\xi_2^2}+\frac{|\xi_1|\crochet{\xi_1}}{(\xi_1+\xi_2)^2}\cdot\frac{|q_1|}{|\xi_1|\crochet{\xi_1}}\\
\lesssim p(\xi_2,q_2) + \frac{M_1(1\ou M_1)}{M_3^2}p(\xi_1,q_1)
\end{multline}
We then treat separately the low and high frequency cases.\\
\textbf{Case 1 :} If $M_1\infeg 1$.\\
We use the previous estimate to get
\begin{multline*}
\normL{2}{\mathbb{1}_{D_{M_3,\infeg K_3}} \cdot p \cdot f_1^{K_1}\star f_2^{K_2}} \\ \lesssim \normL{2}{\mathbb{1}_{D_{\lambda,M_3,\infeg K_3}} \cdot f_1^{K_1}\star (p\cdot f_2^{K_2})}+\frac{M_1(1\ou M_1)}{M_3^2}\normL{2}{\mathbb{1}_{D_{\lambda,M_3,\infeg K_3}} \cdot (p \cdot f_1^{K_1})\star f_2^{K_2}}\\ = I + II
\end{multline*}
To treat $I$, we use (\ref{estimation convolution avec poids}) :
\[I \lesssim (1\ou M_1)M_1^{1/2}K_{min}^{1/2}\normL{2}{p\cdot f_1^{K_1}}\normL{2}{p\cdot f_2^{K_2}}\]
Using that $K_2\supeg M_2\sim M_3$, we obtain
\[I \lesssim (K_1K_2)^{1/2}M_1^{1/2}M_3^{-1/2}\normL{2}{p\cdot f_1^{K_1}}\normL{2}{p\cdot f_2^{K_2}}\]

Next, as we can exchange the roles played by $f_1$ and $f_2$ in (\ref{estimation convolution avec poids}), we can also apply this estimate to control $II$ :
\[II \lesssim \frac{M_1(1\ou M_1)}{M_3^2}(1\ou M_2)M_1^{1/2}K_{min}^{1/2}\normL{2}{p\cdot f_1^{K_1}}\normL{2}{p\cdot f_2^{K_2}}\]
Using that $M_1\infeg 1 \infeg M_3\sim M_2$, we directly get
\[II \lesssim M_1^{3/2}M_3^{-3/2}(K_1K_2)^{1/2}\normL{2}{p\cdot f_1^{K_1}}\normL{2}{p\cdot f_2^{K_2}}\]
Finally
\[\normL{2}{\mathbb{1}_{D_{M_3,\infeg K_3}} \cdot p \cdot f_1^{K_1}\star f_2^{K_2}}\lesssim M_1^{1/2}M_3^{-1/2}\cdot K_1^{1/2}\normL{2}{p\cdot f_1^{K_1}}K_2^{1/2}\normL{2}{p\cdot f_2^{K_2}}\]
so after summing
\begin{multline*}
M_3\sum_{K_3\supeg M_3}K_3^{-1/2}\beta_{M_3,K_3}^{b_1}\normL{2}{\mathbb{1}_{D_{M_3,\infeg K_3}} \cdot p \cdot f_1^{K_1}\star f_2^{K_2}}\\
\lesssim M_1^{1/2} \cdot K_1^{1/2}\normL{2}{p\cdot f_1^{K_1}}K_2^{1/2}\normL{2}{p\cdot f_2^{K_2}}
\end{multline*}
since
\[\sum_{K_3\supeg M_3}K_3^{-1/2}\beta_{M_3,K_3}^{b_1}\lesssim M_3^{-1/2}\]
This is (\ref{estimation bilineaire lhh cle}) in that case.\\
\textbf{Case 2 :} If $M_1 > 1$.\\
It is still sufficient to use (\ref{estimation bilineaire lhh poids 1}) if $K_3$ is large enough .

Indeed, let us split the sum over $K_3$ in two parts, depending on wether $K_3\supeg M_1^2M_3$ or $M_3\infeg K_3\infeg M_1^2M_3$.\\
\textbf{Case 2.1 :} If $K_3\supeg M_1^2M_3$.\\
We proceed as in the case $M_1\infeg 1$ to get
\begin{multline*}
\normL{2}{\mathbb{1}_{D_{M_3,\infeg K_3}} \cdot p \cdot f_1^{K_1}\star f_2^{K_2}} \\ \lesssim \normL{2}{\mathbb{1}_{D_{\lambda,M_3,\infeg K_3}} \cdot f_1^{K_1}\star (p\cdot f_2^{K_2})}+\frac{M_1(1\ou M_1)}{M_3^2}\normL{2}{\mathbb{1}_{D_{\lambda,M_3,\infeg K_3}} \cdot (p \cdot f_1^{K_1})\star f_2^{K_2}}\\ = I + II
\end{multline*}
As previously,
\begin{multline*}
I \lesssim M_1^{3/2}K_{min}^{1/2}\normL{2}{p\cdot f_1^{K_1}}\normL{2}{p\cdot f_2^{K_2}}\\
\lesssim M_1^{3/2}(K_1K_2)^{1/2}M_2^{-1/2}\normL{2}{p\cdot f_1^{K_1}}\normL{2}{p\cdot f_2^{K_2}}
\end{multline*}
As for $II$, we have again
\begin{multline*}
II \lesssim M_1^{5/2}M_2M_3^{-2}K_{min}^{1/2}\normL{2}{p\cdot f_1^{K_1}}\normL{2}{p\cdot f_2^{K_2}}\\
\lesssim M_1^{3/2}(K_1K_2)^{1/2}M_2^{-1/2}\normL{2}{p\cdot f_1^{K_1}}\normL{2}{p\cdot f_2^{K_2}}
\end{multline*}
It remains to sum for the modulations $K_3\supeg M_1^2M_3$ :
\begin{multline}\label{estimation lhh finale haute modulation}
M_3\sum_{K_3\supeg M_1^2M_3}K_3^{-1/2}\beta_{M_3,K_3}^{b_1}\normL{2}{\mathbb{1}_{D_{\lambda,M_3,\infeg K_3}} \cdot p \cdot f_1^{K_1}\star f_2^{K_2}}\\ \lesssim M_1^{1/2}\left(K_1^{1/2}\normL{2}{p\cdot f_1^{K_1}}\right)\left(K_2^{1/2}\normL{2}{p\cdot f_2^{K_2}}\right)
\end{multline}
since
\[\sum_{K_3\supeg M_1^2M_3}K_3^{-1/2}\beta_{M_3,K_3}^{b_1} \lesssim M_1^{-1}M_3^{-1/2}\beta_{M_3,M_1^2M_3}^{b_1}\]
and for $M_1>1$, we have $M_1^2M_3< M_3^3$ so ${\displaystyle \beta_{M_3,M_1^2M_3}=1}$.\\
\textbf{Case 2.2 :} If $M_3\infeg K_3\infeg M_1^2M_3$.\\
We improve (\ref{estimation bilineaire lhh poids 1}) using the resonant function (cf. (\ref{definition fonction resonance})). Observe that, since $\Omega(\zeta_1,\zeta_2,\zeta_3)$ and the hyperplane $\zeta_1+\zeta_2+\zeta_3=0$ are invariant under permutation, we have
\[\left|\frac{q_1+q_2}{\xi_1+\xi_2}-\frac{q_2}{\xi_2}\right|=\left|\frac{\xi_2}{\xi_1(\xi_1+\xi_2)}\Omega(-\zeta_1-\zeta_2,\zeta_2,\zeta_1)+3\xi_1^2\right|^{1/2}\]
Since $\supp f_i\subset D_{\lambda,M_i,\infeg K_i}$ and $\int f_1\star f_2\cdot f_3 \neq 0 \Rightarrow |\Omega|\lesssim K_{max}$, we deduce the bound
\begin{equation}\label{estimation bilineaire lhh poids 2}
p(\xi_1+\xi_2,q_1+q_2) \lesssim 1+\frac{|q_1+q_2|}{|\xi_1+\xi_2|^2}\lesssim
p(\xi_2,q_2) + M_1^{1/2}M_3^{-2}K_{max}^{1/2}
\end{equation}
Therefore, we have the bound
\begin{multline*}
\normL{2}{\mathbb{1}_{D_{\lambda,M_3,\infeg K_3}} \cdot p \cdot f_1^{K_1}\star f_2^{K_2}} \\ \lesssim \normL{2}{\mathbb{1}_{D_{\lambda,M_3,\infeg K_3}} \cdot f_1^{K_1}\star (p\cdot f_2^{K_2})}+M_1^{-1/2}M_3^{-1}K_{max}^{1/2}\normL{2}{\mathbb{1}_{D_{\lambda,M_3,\infeg K_3}} \cdot f_1^{K_1}\star f_2^{K_2}}
\end{multline*}
as $M_1\lesssim M_3$.\\
To treat those terms, we distinguish the cases of corollary~\ref{proposition estimation bilineaire dyadique synthese}.\\
\underline{Case 2.1 (a) :} If $K_{max}\lesssim M_1M_2M_3$. In that case we estimate $K_{max}^{1/2}$ in the second term and then apply (\ref{estimation forme trilineaire basse modulation}) to both terms to get the bound
\begin{multline*}
M_3\sum_{K_3=M_3}^{M_1^2M_3}K_3^{-1/2}\beta_{M_3,K_3}^{b_1}\normL{2}{\mathbb{1}_{D_{\lambda,M_3,\infeg K_3}} \cdot p \cdot f_1^{K_1}\star f_2^{K_2}}\\ \lesssim \ln\left(M_1\right)M_1^{-1/2}\cdot(K_1K_2)^{1/2}\normL{2}{p\cdot f_1^{K_1}}\normL{2}{p\cdot f_2^{K_2}}
\end{multline*}
\underline{Case 2.2 (b)\&(c) :} If $K_{max}\gtrsim M_1M_2M_3$. Then we lose the factor $K_{max}^{1/2}$ in the first term and use (\ref{estimation Strichartz grossiere}) for both terms with the indices corresponding to $K_{min}$ and $K_{med}$, getting the final bound
\begin{multline*}
M_3\sum_{K_3=M_3}^{M_1^2M_3}K_3^{-1/2}\beta_{M_3,K_3}^{b_1}\normL{2}{\mathbb{1}_{D_{\lambda,M_3,\infeg K_3}} \cdot p \cdot f_1^{K_1}\star f_2^{K_2}}\\ \lesssim \ln\left(M_1\right)\cdot(K_1K_2)^{1/2}\normL{2}{p\cdot f_1^{K_1}}\normL{2}{p\cdot f_2^{K_2}}
\end{multline*}
\end{proof}
\newpage
\begin{lemme}[$High\times High \rightarrow Low$]\label{lemme hhl}
Let $M_1,M_2,M_3\in 2^{\Z}$ with $M_1 \sim M_2 \gtrsim (1\ou M_3)$, and $b_1\in [0;1/2[$. Then for $u_{M_1}\in\nl{M_1}^{0}$ and $v_{M_2}\in\nl{M_2}^{0}$, we have
\begin{equation}\label{estimation bilineaire hhl}
\norme{\nl{M_3}^{b_1}}{P_{M_3}\drx\left(u_{M_1}\cdot v_{M_2}\right)}\lesssim M_2^{3/2+4b_1}(1\ou M_3)^{-1}\norme{\fl{M_1}^{0}}{u_{M_1}}\norme{\fl{M_2}^{0}}{v_{M_2}}
\end{equation}
\end{lemme}

\begin{proof}
We proceed similarly to the previous lemma, but this time the norm on the left-hand side only controls functions on time intervals of size $(1\ou M_3)^{-1}$ whereas the norms on the right-hand side require a control for time intervals of size $M_2^{-1}$. Thus will cut the time intervals in smaller pieces. 

To do so, we take $\gamma$ as in the previous lemma. Since now $M_1\sim M_2 \gtrsim (1\ou M_3)$, we can write
\begin{multline*}
\chi_{(1\ou M_3)^{-1}}(t-t_{M_3}) = \sum_{|m|\lesssim M_2(1\ou M_3)^{-1}}\sum_{|n|\lesssim 100} \chi_{(1\ou M_3)^{-1}}(t-t_{M_3})\gamma_{M_2}(t-t_{M_3}- M_2^{-1}m)\\
\cdot\gamma_{M_1}(t-t_{M_3}-M_2^{-1}m-M_1^{-1}n)
\end{multline*}
As previously, without loss of generality, we can assume $m=n=0$, and defining
\[f_{1} := \F\left\{\gamma\left(M_1(t-t_{M_3})\right)u_{M_1}\right\}\]
and
\[
f_{2} := \F\left\{\gamma\left(M_2(t-t_{M_3})\right)v_{M_2}\right\}
\]
it then suffices to prove that $\forall K_i \supeg (1\ou M_i)$ :
\begin{multline}\label{estimation bilineaire hhl cle}
M_2(1\ou M_3)^{-1}\cdot M_3\sum_{K_3\supeg (1\ou M_3)}K_3^{-1/2}\beta_{M_3,K_3}^{b_1}\normL{2}{\mathbb{1}_{D_{\lambda,M_3,\infeg K_3}} \cdot p \cdot f_{1}^{K_1}\star f_{2}^{K_2}}\\
\lesssim M_2^2 (1\ou M_3)^{-1} K_1^{1/2} \normL{2}{p\cdot f_{1}^{K_1}}K_2^{1/2}\normL{2}{p\cdot f_{2}^{K_2}}
\end{multline}
where we have denoted
\[f_{i}^{M_2} := \chi_{M_i}(\tau-\omega)f_{i} \text{ and }f_{i}^{K_i}:=\rho_{K_i}(\tau-\omega)f_{i},~K_i>M_i\]
As previously, we need to estime $p(\xi_1+\xi_2,q_1+q_2)$ with respect to $p(\xi_1,q_1)$ and $p(\xi_2,q_2)$ :
\begin{multline}\label{estimation p grossiere hhl}
p(\xi_1+\xi_2,q_1+q_2) \lesssim 1+ \frac{|q_1+q_2|}{|\xi_1+\xi_2|\crochet{\xi_1+\xi_2}}\\
\lesssim 1 + \frac{|\xi_1|\crochet{\xi_1}}{|\xi_1+\xi_2|\crochet{\xi_1+\xi_2}}\frac{|q_1|}{|\xi_1|\crochet{\xi_1}}+\frac{|\xi_2|\crochet{\xi_2}}{|\xi_1+\xi_2|\crochet{\xi_1+\xi_2}}\frac{|q_2|}{|\xi_2|\crochet{\xi_2}}\\
\lesssim M_2^2M_3^{-1}(1\ou M_3)^{-1}\left(p(\xi_1,q_1)+p(\xi_2,q_2)\right)
\end{multline}
Just as before, we distinguish several cases.\\
\textbf{Case 1.1 :} If $M_3 \infeg 1$ and $K_3\supeg M_2^5$ :\\
We use (\ref{estimation p grossiere hhl}), so that the left-hand side of (\ref{estimation bilineaire hhl cle}) is controled with
\begin{multline*}
M_2^3 \sum_{K_3\supeg M_2^5} K_3^{b_1-1/2}\left\{\normL{2}{\mathbb{1}_{D_{\lambda,M_3,\infeg K_3}} \cdot  (p \cdot f_1^{K_1})\star  f_2^{K_2}}\right.\\
\left.+\normL{2}{\mathbb{1}_{D_{\lambda,M_3,\infeg K_3}} \cdot  f_1^{K_1}\star (p \cdot f_2^{K_2})}\right\}
\end{multline*}
Using (\ref{estimation convolution avec poids}) and that $M_1\sim M_2\supeg 1$ and $K_1,K_2\gtrsim M_2$, we get the bound
\begin{multline*}
\sum_{K_3\supeg M_2^5}K_3^{b_1-1/2}M_2^3\cdot M_2 M_3^{1/2}K_{min}^{1/2} \normL{2}{p\cdot f_1^{K_1}}\normL{2}{p\cdot f_2^{K_2}}\\
\lesssim M_2^{1+5b_1}M_3^{1/2} \cdot (K_1K_2)^{1/2}\normL{2}{p\cdot f_1^{K_1}}\normL{2}{p\cdot f_2^{K_2}}
\end{multline*}
which suffices for (\ref{estimation bilineaire hhl cle}).\\
\textbf{Case 1.2 :} If $M_3\infeg 1$ and $1\infeg K_3\infeg M_2^5$ :\\
We improve the control on $p$ in this regime by using $\Omega$ as in (\ref{estimation bilineaire lhh poids 2}). We get in this case
\[\left|\frac{q_1+q_2}{\xi_1+\xi_2}-\frac{q_1}{\xi_1}\right| =\left| \frac{\xi_2}{\xi_1(\xi_1+\xi_2)}\Omega(\zeta_1,-\zeta_1-\zeta_2,\zeta_2)+3\xi_2^2\right|^{1/2}
\lesssim M_2 + M_3^{-1/2}K_{max}^{1/2}\]
from which we deduce
\begin{equation}\label{estimation p precise hhl m3<1}
p(\xi_1+\xi_2,q_1+q_2) \lesssim M_2p(\xi_1,q_1) + M_3^{-1/2}K_{max}^{1/2}
\end{equation}
Using this estimate, we get the bound
\begin{multline*}
M_3M_2^2 \sum_{K_3=1}^{M_2^5}K_3^{b_1-1/2}\left\{\normL{2}{\mathbb{1}_{D_{\lambda,M_3,\infeg K_3}} \cdot  (p \cdot f_1^{K_1})\star  f_2^{K_2}}\right.\\
\left. + M_3^{-1/2}M_2^{-1}K_{max}^{1/2}\normL{2}{\mathbb{1}_{D_{\lambda,M_3,\infeg K_3}} \cdot f_1^{K_1}\star  f_2^{K_2}}\right\}
\end{multline*}
Observe that the term within the braces is the same as in case 2.2 of lemma~\ref{lemme lhh}, so we control it the exact same way to get the final bound
\[M_3^{1/2}M_2^{1+5b_1}\cdot(K_1K_2)^{1/2}\normL{2}{p\cdot f_1^{K_1}}\normL{2}{p\cdot f_2^{K_2}}\]

\textbf{Case 2.1 :} If $M_3 \supeg 1$ and $K_3\supeg M_2^4M_3^{-1}$.\\
We use again (\ref{estimation p grossiere hhl}) so that the left-hand side of (\ref{estimation bilineaire hhl cle}) is controled with
\begin{multline*}
M_2 \sum_{K_3\supeg M_2^4M_3^{-1}} K_3^{-1/2}\beta_{M_3,K_3}^{b_1} M_2^2M_3^{-2} \left\{\normL{2}{\mathbb{1}_{D_{\lambda,M_3,\infeg K_3}} \cdot  (p \cdot f_1^{K_1})\star  f_2^{K_2}}\right.\\
\left.+\normL{2}{\mathbb{1}_{D_{\lambda,M_3,\infeg K_3}} \cdot  f_1^{K_1}\star (p \cdot f_2^{K_2})}\right\}
\end{multline*}
With (\ref{estimation convolution avec poids}) again, we obtain the bound
\begin{multline*}
\sum_{K_3\supeg M_2^4M_3^{-1}}K_3^{-1/2}\beta_{M_3,K_3}^{b_1}M_2^3M_3^{-2}\cdot M_2 M_3^{1/2}K_{min}^{1/2} \normL{2}{p\cdot f_1^{K_1}}\normL{2}{p\cdot f_2^{K_2}}\\
\lesssim M_2^{3/2+4b_1}M_3^{-1-4b_1} \cdot (K_1K_2)^{1/2}\normL{2}{p\cdot f_1^{K_1}}K_2^{1/2}\normL{2}{p\cdot f_2^{K_2}}
\end{multline*}
\textbf{Case 2.2 :} If $M_3\supeg 1$ and $M_3\infeg K_3\infeg M_2^4M_3^{-1}$.\\
(\ref{estimation p precise hhl m3<1}) becomes in this case
\begin{equation}\label{estimation p precise hhl}
p(\xi_1+\xi_2,q_1+q_2) \lesssim M_3^{-1}M_2p(\xi_1,q_1) + M_3^{-3/2}K_{max}^{1/2}
\end{equation}
So the use of (\ref{estimation p precise hhl}) allows us to bound the left-hand side of (\ref{estimation bilineaire hhl cle}) with
\begin{multline*}
M_2^2M_3^{-1} \sum_{K_3=M_3}^{M_2^4M_3^{-1}}K_3^{-1/2}\beta_{M_3,K_3}^{b_1}\left\{\normL{2}{\mathbb{1}_{D_{\lambda,M_3,\infeg K_3}} \cdot  (p \cdot f_1^{K_1})\star  f_2^{K_2}}\right.\\
\left. + M_3^{-1/2}M_2^{-1}K_{max}^{1/2}\normL{2}{\mathbb{1}_{D_{\lambda,M_3,\infeg K_3}} \cdot f_1^{K_1}\star  f_2^{K_2}}\right\}
\end{multline*}
Proceeding similarly to the previous cases, we finally obtain the bound
\[M_2^{1+4b_1}M_3^{-1-4b_1}\cdot (K_1K_2)^{1/2}\normL{2}{p\cdot f_1^{K_1}}K_2^{1/2}\normL{2}{p\cdot f_2^{K_2}}\]
\end{proof}

\begin{lemme}[$Low\times Low \rightarrow Low$]\label{lemme lll}
Let $M_1, M_2, M_3\in 2^{-\Z}$ and $b_1\in [0;1/2[$. Then for $u_{M_1}\in\fl{M_1}^{0}$ and $v_{M_2}\in\fl{M_2}^{0}$ we have
\begin{equation}\label{estimation bilineaire lll}
\norme{\nl{M_3}^{b_1}}{P_{M_3}\drx\left(u_{M_1}\cdot v_{M_2}\right)}\lesssim (M_1M_2M_3)^{1/2}\norme{\fl{M_1}^{0}}{u_{M_1}}\norme{\fl{M_2}^{0}}{v_{M_2}}
\end{equation}
\end{lemme}
\begin{proof}
As in the previous lemmas, it is enough to prove that $\forall K_1,K_2 \supeg 1$,
\begin{multline}\label{estimation bilineaire lll cle}
M_3\sum_{K_3\supeg 1}K_3^{-1/2}\beta_{M_3,K_3}^{b_1}\normL{2}{\mathbb{1}_{D_{\lambda,M_3,\infeg K_3}} \cdot p \cdot f_1^{K_1}\star f_2^{K_2}}\\
\lesssim K_1^{1/2} \normL{2}{p\cdot f_1^{K_1}}K_2^{1/2}\normL{2}{p\cdot f_2^{K_2}}
\end{multline}
By symmetry, we may assume $M_1\infeg M_2$, so similarly to (\ref{estimation p grossiere hhl}), we have in this case
\[
p(\xi_1+\xi_2,q_1+q_2)  \lesssim M_2M_3^{-1}\left(p(\xi_1,q_1)+p(\xi_2,q_2)\right)
\]
It then suffices to use (\ref{estimation convolution avec poids}) along with the previous bound to get (\ref{estimation bilineaire lll cle}) :
\begin{multline*}
M_3\sum_{K_3\supeg 1}K_3^{-1/2}\beta_{M_3,K_3}^{b_1}\normL{2}{\mathbb{1}_{D_{\lambda,M_3,\infeg K_3}} \cdot p \cdot f_1^{K_1}\star f_2^{K_2}}\\
\lesssim M_2\sum_{K_3\supeg 1}K_3^{b_1-1/2}M_{min}^{1/2}K_{min}^{1/2}\normL{2}{p\cdot f_1^{K_1}}\normL{2}{p\cdot f_2^{K_2}}\\
\lesssim (M_1M_2M_3)^{1/2}\cdot
(K_1K_2)^{1/2}\normL{2}{p\cdot f_1^{K_1}}\normL{2}{p\cdot f_2^{K_2}}
\end{multline*}
\end{proof}

\begin{proposition}\label{proposition bilineaire globale}
Let $T\in]0;1]$, $\alpha \supeg 1$ and $b_1\in [0;1/8]$. Then for $u,v\in \Fl^{\alpha,0}(T)$ we have
\begin{equation}\label{estimation bilineaire globale}
\norme{\Nl^{\alpha,b_1}(T)}{\drx(uv)}\lesssim \norme{\Fl^{\alpha,0}(T)}{u}\norme{\Fl^{1,0}(T)}{v}+\norme{\Fl^{1,0}(T)}{u}\norme{\Fl^{\alpha,0}(T)}{v}
\end{equation}
\end{proposition}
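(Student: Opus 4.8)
The plan is to reduce the global bilinear estimate to the three dyadic lemmas (Lemma~\ref{lemme lhh}, \ref{lemme hhl}, \ref{lemme lll}) via a Littlewood-Paley decomposition, then sum the dyadic pieces. First I would pass to extensions: take $\widetilde{u},\widetilde{v}\in\Fl^{\alpha,0}$ with norms at most twice the $\Fl^{\alpha,0}(T)$ norms of $u,v$, and note that $\drx(\widetilde u\widetilde v)$ is an extension of $\drx(uv)$, so it suffices to bound $\norme{\Nl^{\alpha,b_1}}{\drx(\widetilde u\widetilde v)}$. Writing $\widetilde u=\sum_{M_1}P_{M_1}\widetilde u$, $\widetilde v=\sum_{M_2}P_{M_2}\widetilde v$, and decomposing $\drx(P_{M_1}\widetilde u\,P_{M_2}\widetilde v)=\sum_{M_3}P_{M_3}\drx(P_{M_1}\widetilde u\,P_{M_2}\widetilde v)$, the definition~(\ref{definition N}) of $\Nl^{\alpha,b_1}(T)$ gives
\[
\norme{\Nl^{\alpha,b_1}}{\drx(\widetilde u\widetilde v)}^2=\sum_{M_3}(1\ou M_3)^{2\alpha}\Big\|\sum_{M_1,M_2}P_{M_3}\drx(P_{M_1}\widetilde u\,P_{M_2}\widetilde v)\Big\|_{\nl{M_3}^{b_1}(T)}^2 .
\]
The support condition $M_{min}\lesssim M_{med}\sim M_{max}$ (recalled just before the proposition) restricts the sum to the three regimes $Low\times High\to High$, $High\times High\to Low$, $Low\times Low\to Low$, with only $O(1)$ values of the third frequency in each nontrivial triple.

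Next I would apply the dyadic lemmas in each regime. In the $Low\times High\to High$ case ($M_1\lesssim M_2\sim M_3$) Lemma~\ref{lemme lhh} gives a gain $M_1^{1/2}$ (which is summable over $M_1$, since $M_1$ ranges over a geometric sequence), and the factor $(1\ou M_3)^{2\alpha}$ is absorbed by putting the $\alpha$ weight on the high-frequency factor $P_{M_2}\widetilde v$ — this is exactly where the asymmetric right-hand side $\norme{\Fl^{\alpha,0}}{u}\norme{\Fl^{1,0}}{v}+\norme{\Fl^{1,0}}{u}\norme{\Fl^{\alpha,0}}{v}$ comes from, as either $u$ or $v$ can be the high-frequency input. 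In the $High\times High\to Low$ case ($M_1\sim M_2\gtrsim 1\ou M_3$) Lemma~\ref{lemme hhl} provides $M_2^{3/2+4b_1}(1\ou M_3)^{-1}$; here $(1\ou M_3)^{2\alpha}(1\ou M_3)^{-2}$ is harmless since $M_3\lesssim M_2$, and the loss $M_2^{3/2+4b_1}$ must be beaten by the gain hidden in the left side of~(\ref{estimation bilineaire hhl cle}) together with the $\Fl^{1,0}$ norms of the two high-frequency factors — one checks that with $b_1\le 1/8$ the exponent works out and the $M_2$-sum converges (this is the classical high-high-to-low summation, where one of the two factors carries the derivative and the other provides decay). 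In the $Low\times Low\to Low$ case Lemma~\ref{lemme lll} gives $(M_1M_2M_3)^{1/2}$, trivially summable.

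The main obstacle is the bookkeeping in the $High\times High\to Low$ regime: one has to sum $\sum_{M_2}M_2^{2\alpha}\cdot M_2^{3+8b_1}\|P_{M_2}\widetilde u\|_{\fl{M_2}^0}^2\|P_{M_2}\widetilde v\|_{\fl{M_2}^0}^2$-type quantities and, after Cauchy–Schwarz in $M_2$ (or rather, after recognizing that both high factors carry frequency $\sim M_2$), convert this into a product of $\Fl^{\alpha,0}$ and $\Fl^{1,0}$ square-sums. The delicate point is that the apparent loss $M_2^{3/2+4b_1}$ in Lemma~\ref{lemme hhl} is only an estimate of $\norme{\nl{M_3}^{b_1}}{\cdot}$ in terms of $\norme{\fl{M_1}^0}{\cdot}\norme{\fl{M_2}^0}{\cdot}$, and when reassembling one gains $M_2^{2}$ from the two factors of $(1\ou M)^{\alpha}$ with $\alpha=1$ plus the implicit derivatives already accounted for in the $\Fl$-norms; tracking that $b_1\le 1/8$ is precisely what keeps $3/2+4b_1<2$, so there is a strictly positive power of $M_2$ to spare for absolute convergence. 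I would also need to invoke Proposition (the general time-multiplier estimate~(\ref{estimation multiplicateur N})) and the localization Proposition~(\ref{propriete XM})--(\ref{estimation cle localisation XM}) once more to pass from the localized-in-time dyadic statements back to the $Y(T)$-norms via~(\ref{definition norme localisation T}), but this is routine given the work already done in the linear-estimate proof.
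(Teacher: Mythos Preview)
Your proposal is correct and follows essentially the same route as the paper: reduce to dyadic pieces via Littlewood--Paley, invoke Lemmas~\ref{lemme lhh}, \ref{lemme hhl}, \ref{lemme lll} in the three interaction regimes, and sum, with the constraint $b_1\le 1/8$ entering exactly in the $High\times High\to Low$ summation. Two minor remarks: (i) the paper chooses extensions \emph{dyadically} (an extension $u_{M_1}$ of each $P_{M_1}u$ with $\|u_{M_1}\|_{\fl{M_1}^{0}}\le 2\|P_{M_1}u\|_{\fl{M_1}^{0}(T)}$) rather than a single global extension $\widetilde u$, which is more natural given that $\Fl^{\alpha,0}(T)$ is defined as an $\ell^2$-sum of $\fl{M}^{0}(T)$-norms each carrying its own infimum; (ii) the final invocation of~(\ref{estimation multiplicateur N}) and~(\ref{propriete XM})--(\ref{estimation cle localisation XM}) is unnecessary, since Lemmas~\ref{lemme lhh}--\ref{lemme lll} are already stated for the full $\fl{M}^{0}$ and $\nl{M}^{b_1}$ norms and apply directly to the extensions.
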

\begin{proof}
For $M_1\in 2^{\Z}$, let us choose an extension $u_{M_1}\in \fl{M_1}^{0}$ of $P_{M_1}u$ satisfying \[ \norme{\fl{M_1}^0}{u_{M_1}}\infeg 2\norme{\fl{M_1}^0(T)}{P_{M_1}u}\] and let us define $v_{M_2}$ analogously.

Using the definition of $\Fl^{\alpha,b_1}(T)$ (\ref{definition F}) and $\Nl^{\alpha,b_1}(T)$ (\ref{definition N}), it then suffices to show that
\begin{multline}\label{estimation cle estimation bilineaire globale}
\sum_{M_1,M_2,M_3}(1\ou M_3)^{2\alpha}\norme{\nl{M_3}^{b_1}}{P_{M_3}\drx (u_{M_1}\cdot v_{M_2})}^2\\
\lesssim \sum_{M_1,M_2}\left\{(1\ou M_1)^{2\alpha}\norme{\fl{M_1}^{0}}{u_{M_1}}^2(1\ou M_2)^2\norme{\fl{M_2}^{0}}{v_{M_2}}^2\right.\\+\left.(1\ou M_1)^2\norme{\fl{M_1}^{0}}{u_{M_1}}^2(1\ou M_2)^{2\alpha}\norme{\fl{M_2}^{0}}{v_{M_2}}^2\right\}
\end{multline}
Since the left-hand side of (\ref{estimation bilineaire globale}) is symmetrical in $u$ and $v$, we can assume $M_1\infeg M_2$.\\
Then we can decompose the left-hand side of (\ref{estimation cle estimation bilineaire globale}) depending on the relation between $M_1,M_2$ and $M_3$ :
\begin{multline*}
\sum_{M_1,M_2,M_3>0}(1\ou M_3)^{2\alpha}\norme{\nl{M_3}^{b_1}}{P_{M_3}\drx (u_{M_1}\cdot v_{M_2})}^2\\
 = \sum_{i=1}^3 \sum_{(M_1,M_2,M_3)\in A_i}(1\ou M_3)^{2\alpha}\norme{\nl{M_3}^{b_1}}{P_{M_3}\drx (u_{M_1}\cdot v_{M_2})}^2
\end{multline*}
where
\[
\begin{cases}
A_1 := \left\{(M_1,M_2,M_3)\in 2^{\Z}, (1\ou M_1)\lesssim M_2\sim M_3\right\}\\
A_2 := \left\{(M_1,M_2,M_3)\in 2^{\Z}, (1\ou M_3)\lesssim M_1\sim M_2\right\}\\
A_3:=\left\{(M_1,M_2,M_3)\in 2^{\Z}, M_{max}\lesssim 1\right\}
\end{cases}
\]
Using lemma~\ref{lemme lhh}, the first term is estimated by
\begin{multline*}
\sum_{(M_1,M_2,M_3)\in A_1}(1\ou M_3)^{2\alpha}\norme{\nl{M_3}^{b_1}}{P_{M_3}\drx (u_{M_1}\cdot v_{M_2})}^2\\
\lesssim \sum_{M_2 \gtrsim 1}\sum_{M_1\lesssim M_2}M_1(1\ou M_2)^{2\alpha}\norme{\fl{M_1}^{0}}{u_{M_1}}^2\norme{\fl{M_2}^{0}}{v_{M_2}}^2
\end{multline*}
which suffices for (\ref{estimation cle estimation bilineaire globale}). For the second term, the use of lemma~\ref{lemme hhl} provides the bound
\begin{multline*}
\sum_{(M_1,M_2,M_3)\in A_2}(1\ou M_3)^{2\alpha}\norme{\nl{M_3}^{b_1}}{P_{M_3}\drx (u_{M_1}\cdot v_{M_2})}^2 \\
\lesssim \sum_{ M_2 \gtrsim 1}\sum_{M_1\sim M_2}M_2^{3+8b_1+2(\alpha-1)}\norme{\fl{M_1}^{0}}{u_{M_1}}^2\norme{\fl{M_2}^{0}}{v_{M_2}}^2
\end{multline*}
which is enough for (\ref{estimation cle estimation bilineaire globale}) since $b_1\in[0;1/8]$. Finally, lemma~\ref{lemme lll} allows us to control the last term by
\begin{multline*}
\sum_{(M_1,M_2,M_3)\in A_3}(1\ou M_3)^{2\alpha}\norme{\nl{M_3}^{b_1}}{P_{M_3}\drx (u_{M_1}\cdot v_{M_2})}^2 \\
\lesssim \sum_{M_1\in 2^{-\N}}\sum_{M_2\in 2^{-\N}}M_1M_2\norme{\fl{M_1}^{0}}{u_{M_1}}^2\norme{\fl{M_2}^{0}}{v_{M_2}}^2
\end{multline*}
which concludes the proof of the bilinear estimate.
\end{proof}
\subsection{For the difference equation}
The end of this section is devoted to treating (\ref{estimation bilineaire difference}). Let $b_1\in [0;1/2[$.

We begin with the low frequency interactions :
\begin{lemme}[$Low\times Low \rightarrow Low$]\label{lemme lll difference}
Let $M_1,M_2,M_3\in 2^{-\Z}$. Then for $u_{M_1}\in \overline{\fl{M_1}^0}$ and $v_{M_2}\in\fl{M_2}^0$, we have
\[
\norme{\overline{\nl{M_3}^{b_1}}}{P_{M_3}\drx\left(u_{M_1}\cdot v_{M_2}\right)}\lesssim M_3M_{min}^{1/2}\norme{\overline{\fl{M_1}^{b_1}}}{u_{M_1}}\norme{\fl{M_2}^{b_1}}{v_{M_2}}
\]
\end{lemme} 
\begin{proof}
Proceeding as for the previous lemmas, it suffices to prove that for all $K_1,K_2\supeg 1$ and $f_i^{K_i} : D_{\lambda,M_i,\infeg K_i}\rightarrow \R_+$,
\begin{multline*}
M_3 \sum_{K_3\supeg 1}K_3^{-1/2}\beta{M_3,K_3}^{b_1} \normL{2}{\mathbb{1}_{D_{\lambda,M_3,\infeg K_3}}\cdot f_1^{K_1}\star f_2^{K_2}}\\
\lesssim M_3M_{min}^{1/2}\cdot (K_1K_2)^{1/2} \normL{2}{f_1^{K_1}}\normL{2}{p\cdot f_2^{K_2}}
\end{multline*}
This follows directly from (\ref{estimation convolution avec poids}).
\end{proof}

\begin{lemme}[$High\times High\rightarrow Low$]\label{lemme hhl difference}
Let $M_1,M_2,M_3\in 2^{\Z}$ with $M_1\sim M_2 \gtrsim (1\ou M_3)$. Then for $u_{M_1}\in \overline{\fl{M_1}^0}$ and $v_{M_2}\in\fl{M_2}^0$, we have
\[
\norme{\overline{\nl{M_3}^{b_1}}}{P_{M_3}\drx\left(u_{M_1}\cdot v_{M_2}\right)}\lesssim (1\et M_3)^{3/2}M_2\norme{\overline{\fl{M_1}^{b_1}}}{u_{M_1}}\norme{\fl{M_2}^{b_1}}{v_{M_2}}
\]
\end{lemme}
\begin{proof}
Following the proof of lemma~\ref{lemme hhl}, it is enough to prove that for all $K_i \supeg (1\ou M_i)$ and $f_i^{K_i} : D_{\lambda,M_i,\infeg K_i}\rightarrow \R_+$,
\begin{multline*}
M_3M_2(1\ou M_3)^{-1}\sum_{K_3\supeg (1\ou M_3)}K_3^{-1/2}\beta{M_3,K_3}^{b_1}\normL{2}{\mathbb{1}_{D_{\lambda,M_3,\infeg K_3}} \cdot f_{1}^{K_1}\star f_{2}^{K_2}}\\
\lesssim (1\et M_3)^{3/2}M_2 \cdot(K_1K_2)^{1/2} \normL{2}{f_{1}^{K_1}}\normL{2}{p\cdot f_{2}^{K_2}}
\end{multline*}
This is a consequence of (\ref{estimation Strichartz grossiere}).
\end{proof}
It remains to treat the interaction between low and high frequencies. Since $u$ and $v$ do not play a symetric role anymore, we have to distinguih which one has the low frequency part.
\begin{lemme}[$Low\times High \rightarrow High$]\label{lemme lhh difference}
Let $(1\ou M_1)\lesssim M_2\sim M_3$ and $u_{M_1}\in\overline{\fl{M_1}^0}$, $v_{M_2}\in\fl{M_2}^0$. Then
\[
\norme{\overline{\nl{M_3}^{b_1}}}{P_{M_3}\drx\left(u_{M_1}\cdot v_{M_2}\right)}\lesssim M_1^{1/2}(1\ou M_1)^{1/4}M_2^{1/4} \norme{\overline{\fl{M_1}^{0}}}{u_{M_1}}\norme{\fl{M_2}^{0}}{v_{M_2}}
\]
\end{lemme}
\begin{proof}
Foolowing the proof of lemma~\ref{lemme lhh}, it suffices to prove that for all $K_i \supeg (1\ou M_i)$ and $f_i^{K_i} : D_{\lambda,M_i,\infeg K_i} \rightarrow \R_+$, 
\begin{multline*}
M_3\sum_{K_3\supeg M_3}K_3^{-1/2}\beta{M_3,K_3}^{b_1}\normL{2}{\mathbb{1}_{D_{\lambda,M_3,\infeg K_3}} \cdot f_1^{K_1}\star f_2^{K_2}}\\ \lesssim M_1^{1/2}(1\ou M_1)^{1/4}M_2^{1/4}\cdot(K_1K_2)^{1/2} \normL{2}{f_{1}^{K_1}}\normL{2}{p\cdot f_{2}^{K_2}}
\end{multline*}
Again, this follows from using (\ref{estimation Strichartz grossiere}).
\end{proof}
\begin{lemme}[$High\times Low\rightarrow High$]\label{lemme hlh difference}
Let $(1\ou M_2)\lesssim M_1\sim M_3$ and $u_{M_1}\in\overline{\fl{M_1}^0}$, $v_{M_2}\in\fl{M_2}^0$. Then
\[
\norme{\overline{\nl{M_3}^0}}{P_{M_3}\drx\left(u_{M_1}\cdot v_{M_2}\right)}\lesssim (1\ou M_2)\norme{\overline{\fl{M_1}^{0}}}{u_{M_1}}\norme{\fl{M_2}^{0}}{v_{M_2}}
\]
\end{lemme}
\begin{proof}
As previously, it is enough to prove
\begin{multline*}
M_3\sum_{K_3\supeg M_3}K_3^{-1/2}\normL{2}{\mathbb{1}_{D_{\lambda,M_3,\infeg K_3}} \cdot f_1^{K_1}\star f_2^{K_2}}\\ \lesssim (1\ou M_2)\cdot(K_1K_2)^{1/2} \normL{2}{f_{1}^{K_1}}\normL{2}{p\cdot f_{2}^{K_2}}
\end{multline*}
for $K_i\supeg (1\ou M_i)$ and $f_i^{K_i} : D_{\lambda,M_i,\infeg K_i} \rightarrow \R_+$.\\
Following the proof of lemma~\ref{lemme lhh},we distinguish several cases.\\
\textbf{Case 1 :} If $M_2\infeg 1$.\\
This is a consequence of (\ref{estimation convolution avec poids}).\\
\textbf{Case 2 :} If $M_2\supeg 1$.\\
 We split the sum over $K_3$ into two parts. The high modulations part $K_3\supeg M_2M_3$ is treated again with (\ref{estimation convolution avec poids}), whereas for the sum over the modulations $M_3\infeg K_3\infeg M_2M_3$ is controled by using (\ref{estimation forme trilineaire grande modulation  cas pire}) (which is the worst case of corollary~\ref{proposition estimation bilineaire dyadique synthese}).
\end{proof}
We finally combine the previous estimates to get
\begin{proposition}
Let $T\in]0;1]$, $b_1\in [0;1/2[$ and $u\in \overline{\Fl}^{0}(T)$, $v\in \Fl^{1,0}(T)$. Then 
\begin{equation}\label{estimation bilineaire difference globale}
\norme{\overline{\Nl}^{b_1}(T)}{\drx(uv)}\lesssim \norme{\overline{\Fl}^0(T)}{u}\norme{\Fl^{1,0}(T)}{v}
\end{equation}
\end{proposition}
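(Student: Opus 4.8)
The plan is to follow the scheme of Proposition~\ref{proposition bilineaire globale}, reducing estimate~(\ref{estimation bilineaire difference globale}) to a sum of dyadic pieces and then distributing the frequency interactions among a few regions, in each of which one of the dyadic bilinear estimates just established applies. First I would fix, for each $M_1\in 2^{\Z}$, an extension $u_{M_1}\in\overline{\fl{M_1}^0}$ of $P_{M_1}u$ with $\norme{\overline{\fl{M_1}^0}}{u_{M_1}}\infeg 2\norme{\overline{\fl{M_1}^0}(T)}{P_{M_1}u}$, and similarly $v_{M_2}\in\fl{M_2}^0$ for $P_{M_2}v$. Writing $\drx(uv)=\sum_{M_1,M_2,M_3}P_{M_3}\drx(P_{M_1}u\cdot P_{M_2}v)$ and unwinding the definitions~(\ref{definition N difference}), (\ref{definition F difference}) and~(\ref{definition F}), it then suffices to prove
\[
\sum_{M_1,M_2,M_3}\norme{\overline{\nl{M_3}^{b_1}}}{P_{M_3}\drx(u_{M_1}\cdot v_{M_2})}^2\lesssim\Big(\sum_{M_1}\norme{\overline{\fl{M_1}^0}}{u_{M_1}}^2\Big)\Big(\sum_{M_2}(1\ou M_2)^2\norme{\fl{M_2}^0}{v_{M_2}}^2\Big).
\]

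Since $\int f_1\star f_2\cdot f_3$ vanishes unless $M_{min}\lesssim M_{med}\sim M_{max}$, the only triples $(M_1,M_2,M_3)$ contributing to the left-hand side lie in one of the four regions $\tilde A_1=\{(1\ou M_1)\lesssim M_2\sim M_3\}$, $\tilde A_2=\{(1\ou M_2)\lesssim M_1\sim M_3\}$, $\tilde A_3=\{(1\ou M_3)\lesssim M_1\sim M_2\}$ and $\tilde A_4=\{M_{max}\lesssim1\}$; unlike in Proposition~\ref{proposition bilineaire globale}, $u$ and $v$ are no longer interchangeable, which is why the $Low\times High$ and $High\times Low$ interactions must be kept apart as the two distinct regions $\tilde A_1$ and $\tilde A_2$. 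On $\tilde A_1$ I would apply Lemma~\ref{lemme lhh difference}, on $\tilde A_2$ Lemma~\ref{lemme hlh difference}, on $\tilde A_3$ Lemma~\ref{lemme hhl difference}, and on $\tilde A_4$ Lemma~\ref{lemme lll difference} (all of which hold for every $b_1\in[0;1/2[$), and then sum the resulting dyadic bounds; in each region the sum over the frequency pinned to be $\sim M_{max}$ runs over $O(1)$ dyadic values and is discarded.

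The contributions of $\tilde A_1$, $\tilde A_2$ and $\tilde A_4$ are then routine: on $\tilde A_1$ the square of the weight produced by Lemma~\ref{lemme lhh difference} is $M_1(1\ou M_1)^{1/2}M_2^{1/2}\lesssim(1\ou M_2)^2$ since $M_1\lesssim M_2$, so a Fubini/Cauchy--Schwarz over $M_1\lesssim M_2$ recovers the desired product; on $\tilde A_2$ the weight from Lemma~\ref{lemme hlh difference} is already $(1\ou M_2)$, and summing first over $M_2\lesssim M_1$ and then over $M_1$ closes the estimate; on $\tilde A_4$ every weight is a positive power of $M_1M_2M_3\infeg1$, hence harmless. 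The hard part will be the region $\tilde A_3$, i.e.\ the $High\times High\to Low$ interaction, where $M_1\sim M_2$ and the weight budget $(1\ou M_1)^{0}(1\ou M_2)^2$ is exactly consumed by the factor $M_2$ delivered by Lemma~\ref{lemme hhl difference}, leaving nothing to spare for the sum over the low output frequency $M_3$, which ranges over all dyadic values $\lesssim M_2$. Here one must exploit the extra gain $(1\et M_3)^{3/2}$ in Lemma~\ref{lemme hhl difference} together with the weight $\beta_{M_3,K_3}^{b_1}$ built into $\overline{\nl{M_3}^{b_1}}$ (which recovers a little regularity in the high-modulation regime): for $M_3\infeg1$ the factor $(1\et M_3)^{3/2}=M_3^{3/2}$ already makes the $M_3$-sum geometric, while for $1\lesssim M_3\lesssim M_2$ it is the weighted Bourgain-space structure that keeps the $M_3$-summation under control — this is precisely where the partially periodic setting concentrates its difficulty, and carrying out this summation carefully is the crux of the argument. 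Once it is done, summing over $M_1\sim M_2$ produces the right-hand side and concludes the proof.
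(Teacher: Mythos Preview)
Your overall scheme is exactly the paper's: choose extensions, reduce to a dyadic sum, split into the four interaction regions, and invoke Lemmas~\ref{lemme lll difference}--\ref{lemme hlh difference}. The paper's own proof is in fact terser than yours --- it just names the four regions $B_1,\dots,B_4$ (your $\tilde A_4,\tilde A_3,\tilde A_1,\tilde A_2$ in a different order) and declares that the proposition ``follows from'' the four lemmas, without spelling out any of the summations you discuss.

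Your diagnosis that $\tilde A_3$ with $1\lesssim M_3\lesssim M_2$ is the delicate point is correct, but the mechanism you propose does not work. The weight $\beta_{M_3,K_3}^{b_1}$ sits on the \emph{left-hand} side of Lemma~\ref{lemme hhl difference} (inside the $\overline{\nl{M_3}^{b_1}}$ norm you are trying to bound), so it only makes the quantity to be estimated larger; it cannot supply decay. And indeed Lemma~\ref{lemme hhl difference}, as stated and as proved from the crude Strichartz bound~(\ref{estimation Strichartz grossiere}), returns $(1\wedge M_3)^{3/2}M_2=M_2$ for every $M_3\geq1$, with no $M_3$-dependence whatsoever. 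Squaring and summing over the $\sim\log M_2$ dyadic values $1\leq M_3\lesssim M_2$ then costs an unabsorbable $\log M_2$: take $u_{M_1},v_{M_2}$ concentrated at a single dyadic scale $\sim N$ to see the failure. The paper's one-line proof is equally silent on this summation, so the four lemmas \emph{as stated} do not close the $\tilde A_3$ case; to actually recover the missing $(M_3/M_2)$-decay one must go back inside the proof of Lemma~\ref{lemme hhl difference} and replace~(\ref{estimation Strichartz grossiere}) by the sharper trilinear bounds of Corollary~\ref{proposition estimation bilineaire dyadique synthese}.
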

\begin{proof}
First, for $M_1,M_2\in 2^{\Z}$, we fix an extension $u_{M_1}\in \overline{\fl{M_1}^{0}}$ of $P_{M_1}u$ to $\R$ satisfying \[ \norme{\overline{\fl{M_1}^{0}}}{u_{M_1}} \infeg 2\norme{\overline{\fl{M_1}^{0}}(T)}{P_{M_1}u}\] and similarly for $v_{M_2}$.\\
Using the definition of $\overline{\Fl}^0$ (\ref{definition F difference}) and $\overline{\Nl}^{b_1}$ (\ref{definition N difference}), it then suffices to show that 
\begin{multline}\label{estimation cle bilineaire difference}
\sum_{M_1,M_2,M_3\in 2^{\Z}}\norme{\overline{\nl{M_3}^{b_1}}(T)}{\drx (u_{M_1}\cdot v_{M_2})}^2\\
\lesssim \sum_{M_1,M_2\in 2^{\Z}}\norme{\overline{\fl{M_1}^{0}}(T)}{u_{M_1}}^2(1\ou M_2)^2\norme{\fl{M_2}^{0}(T)}{v_{M_2}}^2
\end{multline}
As in the proof of proposition~\ref{proposition bilineaire globale}, we separate 4 cases, so it suffices to show that for $i\in \{1,2,3,4\}$,
\begin{multline*}
\sum_{(M_1,M_2,M_3)\in B_i}\norme{\overline{\nl{M_3}^{b_1}}(T)}{\drx (u_{M_1}\cdot v_{M_2})}^2\\
\lesssim \sum_{M_1,M_2\in 2^{\Z}}\norme{\overline{\fl{M_1}^{0}}(T)}{u_{M_1}}^2(1\ou M_2)^2\norme{\fl{M_2}^{0}(T)}{v_{M_2}}^2
\end{multline*}
with
\[
\begin{cases}
B_1:=\left\{(M_1,M_2,M_3)\in 2^{-\Z}\right\}\\
B_2:=\left\{(M_1,M_2,M_3)\in 2^{\Z},~M_1\sim M_2\gtrsim (1\ou M_3)\right\}\\
B_3:=\left\{(M_1,M_2,M_3)\in 2^{\Z},~M_2\sim M_3\gtrsim (1\ou M_1)\right\}\\
B_4:=\left\{(M_1,M_2,M_3)\in 2^{\Z},~M_1\sim M_3\gtrsim (1\ou M_2)\right\}
\end{cases}
\]
This follows from lemmas~\ref{lemme lll difference},~\ref{lemme hhl difference},~\ref{lemme lhh difference}, and~\ref{lemme hlh difference} respectively.
\end{proof}
\section{Energy estimates}\label{section estimation energie}
In this section we prove the energy estimates~(\ref{estimation d'energie}) and (\ref{estimation d'energie difference}). As the nonlinear term is expressed as a bilinear form, we will need some control on trilinear form to deal with the energy estimate :
\begin{lemme}\label{lemme estimation trilineaire}
Let $T\in [0;1[$, $M_1,M_2,M_3\in 2^{\Z}$ with $M_{max}\supeg 1$, and $b_1\in[0;1/8]$. Then for $u_i\in \overline{\fl{M_i}^{b_1}}(T)$, $i\in\{1,2,3\}$, with one of them in $\fl{M_i}^{b_1}(T)$ (in order for the integral to converge), we have
\begin{equation}\label{equation energie forme trilineaire}
\left|\int_{[0,T]\times\R\times\Tl}u_1u_2u_3 \dt\dx\dy \right|\lesssim \Lambda_{b_1}(M_{min},M_{max})\prod_{i=1}^3 \norme{\overline{\fl{M_i}^{b_1}}(T)}{u_i}
\end{equation}
where
\begin{equation}\label{definition Lambda estimation trilineaire}
\Lambda_{b_1}(X,Y)=\left(X\et X^{-1}\right)^{1/2}+\left(\frac{(1\ou X)}{Y}\right)^{2b_1}
\end{equation}
\end{lemme}
\begin{proof}
Using the symmetry property (\ref{equation symétrie I}), we may assume $M_1\infeg M_2\infeg M_3$. We begin by fixing some extensions $u_{M_i}\in \overline{\fl{M_i}^{b_1}}$ of $u_i$ to $\R$ satisfying ${\displaystyle \norme{\overline{\fl{M_i}^{b_1}}}{u_{M_i}}\infeg 2 \norme{\overline{\fl{M_i}^{b_1}}(T)}{u_i}}$.\\
Let $\gamma:\R\rightarrow [0;1]$ be a smooth partition of unity as in the proof of lemma~\ref{lemme lhh}, satisfying now $\supp \gamma \subset [-1;1]$ and 
\begin{equation}\label{definition gamma estimation energie}
\forall t\in\R,~\sum_{n\in\Z}\gamma^3(t-n) = 1
\end{equation}
We then use $\gamma$ to chop the time interval in pieces of size $M_3^{-1}$ : 
\begin{equation}\label{estimation energie decoupage}
\int_{[0;T]\times\R\times\Tl}u_1u_2u_3\dt\dx\dy  \lesssim  \sum_{n\in\Z} \int_{\R^2\times\Zl}f_{1,n}\star f_{2,n}\cdot f_{3,n}\dtau\dxi\dq
\end{equation}
where we define
\[
f_{i,n} := \F\left(\gamma(M_3t-n)\mathbb{1}_{[0,T]} u_{M_i}\right)
\]
We can divide the set of integers such that the trilinear form is not zero into two subsets
\begin{multline*}
\mathcal{A}:=\left\{n\in\Z,~\gamma(M_3t-n)\mathbb{1}_{[0,T]}=\gamma(M_3t-n)\right\}\\ \text{ and }\mathcal{B}=\left\{n\in\Z\prive\mathcal{A},~\int f_{1,n}\star f_{2,n}\cdot f_{3,n} \neq 0\right\}
\end{multline*}
Let us notice that $\#\mathcal{A}\lesssim M_3$ and $\#\mathcal{B}\infeg 4$. 

Let us start by dealing with the sum over ${\displaystyle \mathcal{A}}$ :
\[\sum_{n\in\mathcal{A}}\int_{\R^2\times\Zl} f_{1,n}\star f_{2,n}\cdot f_{3,n}\lesssim M_3\sup_{n\in\mathcal{A}}\sum_{K_1,K_2,K_3\supeg M_3}\int_{\R^2\times\Zl} f_{1,n}^{K_1}\star f_{2,n}^{K_2}\cdot f_{3,n}^{K_3}\]
where $f_{i,n}^{K_i}$ is defined as
\begin{equation}\label{definition fi estimation energie}
f_{i,n}^{K_i} (\tau,\xi,q) := \rho_{K_i}(\tau-\omega(\xi,q))f_{i,n}(\tau,\xi,q),~i=1,2,3 \text{ if }K_i>M_3
\end{equation}
and
\[
f_{i,n}^{M_3} (\tau,\xi,q) := \chi_{M_3}(\tau-\omega(\xi,q))f_{i,n}(\tau,\xi,q),~i=1,2,3
\]
Then, we separate the sum into three parts depending on the relations between the $M$'s and the $K$'s as in corollary~\ref{proposition estimation bilineaire dyadique synthese} :
\[\sum_{K_1,K_2,K_3}\int_{\R^2\times\Zl} f_{1,n}^{K_1}\star f_{2,n}^{K_2}\cdot f_{3,n}^{K_3}= \sum_{i=1}^3\sum_{(K_1,K_2,K_3)\in A_i}\int_{\R^2\times\Zl} f_{1,n}^{K_1}\star f_{2,n}^{K_2}\cdot f_{3,n}^{K_3}\]
with
\[\begin{cases}
A_1 := \left\{(K_1,K_2,K_3)\in 2^{\N},~K_i\supeg M_3,~K_{max}\infeg 10^{-10}M_1M_2M_3\right\}\\
A_2:=\left\{(K_1,K_2,K_3)\in 2^{\N},~K_i\supeg M_3,~K_1 = K_{max}\gtrsim M_1M_2M_3\right\}\\
A_3:=\left\{(K_1,K_2,K_3)\in 2^{\N},~K_i\supeg M_3,~K_{max}=(K_2\ou K_3)\gtrsim M_1M_2M_3\right\}
\end{cases}\]
We treat those terms separately, using the estimates of corollary~\ref{proposition estimation bilineaire dyadique synthese}. Denoting $J_i$ the contributioin of the region $A_i$ in the sum, we have
\begin{multline*}J_1 \lesssim M_3\sup_{n\in\mathcal{A}}\sum_{K_1,K_2,K_3\supeg M_3} \left(M_{min}\et M_{min}^{-1}\right)^{1/2}M_{max}^{-1} \prod_{i=1}^3K_i^{1/2}\normL{2}{f_{i,n}^{K_i}}\\
 \lesssim \left(M_{min}\et M_{min}^{-1}\right)^{1/2}\prod_{i=1}^3\norme{\overline{\fl{M_i}^0}}{u_{M_i}}
 \end{multline*}
after using (\ref{estimation forme trilineaire basse modulation}) and 
\begin{equation}\label{estimation controle terme interieure estimation energie}
\sup_{n\in\mathcal{A}}\sum_{K_i\supeg M_3}K_i^{1/2}\normL{2}{f_{i,n}^{K_i}}\lesssim \norme{\overline{\fl{M_i}^0}}{u_{M_i}}
\end{equation}
Indeed, (\ref{estimation controle terme interieure estimation energie}) follows from the definition of ${\displaystyle f_{i,n}^{K_i}}$ (\ref{definition fi estimation energie}), the fact that $\chi_{(1\ou M_i)^{-1}}\equiv 1$ on the support of $\gamma_{M_3^{-1}}$, and the use of (\ref{propriete XM}) and (\ref{estimation cle localisation XM}).

Proceeding analogously, we get
\begin{multline*}
J_3 \lesssim M_3 \sup_{n\in\mathcal{A}}\sum_{K_1,K_2,K_3\supeg M_3} M_{max}^{-1}\left(\frac{(1\ou M_{min})}{M_{max}}\right)^{1/4}\prod_{i=1}^3K_i^{1/2}\normL{2}{f_{i,n}^{K_i}}\\
\lesssim \left(\frac{(1\ou M_{min})}{M_{max}}\right)^{2b_1}\prod_{i=1}^3\norme{\overline{\fl{M_i}^0}}{u_i}
\end{multline*}
by using (\ref{estimation forme trilineaire grande modulation}) and that $b_1\in [0;1/8]$.

Finally, the last contribution is controled thanks to (\ref{estimation forme trilineaire grande modulation  cas pire}), (\ref{estimation controle terme interieure estimation energie}) and the weight $\beta_{M_1,K_1}^{b_1}$ :
\begin{multline*}
J_2 \lesssim\sup_{n\in\mathcal{A}}\sum_{K_1\gtrsim M_1M_2M_3}\sum_{K_2,K_3\supeg M_3}(1\et M_{min})^{1/4}\prod_{i=1}^3 K_i^{1/2}\normL{2}{f_{i,n}^{K_i}}\\
\lesssim (1\et M_{min})^{1/4}\left(\frac{(1\ou M_{min})^{3}}{M_{min}M_{max}^2}\right)^{b_1}\prod_{i=1}^2\norme{\overline{\fl{M_i}^0}}{u_{M_i}}\\
\cdot\left(\sup_{n\in\mathcal{A}}\sum_{K_1\gtrsim M_1M_2M_3} \beta_{M_1,K_1}^{b_1}K_1^{1/2}\normL{2}{f_{1,n}^{K_1}}\right)
\end{multline*}
This suffices for (\ref{equation energie forme trilineaire}) since
\[\sup_{n\in\mathcal{A}}\sum_{K_1\gtrsim M_1M_2M_3} \beta_{M_1,K_1}^{b_1}K_1^{1/2}\normL{2}{f_{1,n}^{K_1}} \lesssim \norme{\overline{\fl{M_1}^{b_1}}}{u_{M_1}}\] as we only need to use (\ref{estimation cle localisation XM}) in this regime.

Let us now come back to (\ref{estimation energie decoupage}). It remains to treat the border terms. We have
\[
\sum_{n\in\mathcal{B}}\int_{\R^2\times\Zl}f_{1,n}\star f_{2,n}\cdot f_{3,n} \infeg \sum_{n\in\mathcal{B}}\sum_{K_1,K_2,K_3} \int_{\R^2\times\Zl}g_{1,n}^{K_1}\star g_{2,n}^{K_2}\cdot g_{3,n}^{K_3}
\]
where $g_{i,n}^{K_i}$ is defined as
\begin{equation}\label{definition g lemme estimation energie}
g_{i,n}^{K_i} := \rho_{K_i}(\tau-\omega)\F\left(\gamma(M_3t-n)\mathbb{1}_{[0,T]}u_{M_i}\right),~i=1,2,3,~K_i \supeg 1
\end{equation}
Once again, we separate the different cases of corollary~\ref{proposition estimation bilineaire dyadique synthese}. Let us define $G_i$ the contribution of the region $A_i$ in the sum above.

Using (\ref{estimation forme trilineaire basse modulation}), we can control the first term :
\begin{equation}\label{estimation G1 estimation energie}
G_1 \lesssim \left(M_{min}\et M_{min}^{-1}\right)^{1/2}M_{max}^{-1}\sup_{n\in\mathcal{B}} \sum_{(K_1,K_2,K_3)\in A_1} \prod_{i=1}^3K_i^{1/2}\normL{2}{g_{i,n}^{K_i}}
\end{equation}
Now, we need to replace (\ref{estimation controle terme interieure estimation energie}) by an analogous estimate on $\mathcal{B}$ :
\begin{equation}\label{estimation controle terme de bord estimation energie}
\sup_{n\in\mathcal{B}}\sup_{K_i\supeg M_3}K_i^{1/2}\normL{2}{g_{i,n}^{K_i}}\lesssim \norme{\overline{\fl{M_i}^0}}{u_{M_i}}
\end{equation}
Let us prove this estimate. Using the definition of ${\displaystyle g_{i,n}^{K_i}}$ (\ref{definition g lemme estimation energie}), if we note $\widetilde{u_{M_i}}:=\gamma(M_3t-n)u_{M_i}$ then we have to estimate
\[\normL{2}{g_{i,n}^{K_i}}=\normL{2}{\rho_{K_i}(\tau-\omega)\cdot\widehat{\mathbb{1}_{[0,T]}}\star\F\left(\widetilde{u_{M_i}}\right)}\]
We then split ${\displaystyle \F\left(\widetilde{u_{M_i}}\right)}$ depending on its modulations :
\begin{multline*}
\normL{2}{g_{i,n}^{K_i}}
\infeg \sum_{K\infeg K_i/10}\normL{2}{\rho_{K_i}(\tau-\omega)\cdot\widehat{\mathbb{1}_{[0,T]}}\star_{\tau}\left(\rho_{K}(\tau'-\omega)\F\left(\widetilde{u_{M_i}}\right)\right)} \\
+\sum_{K\supeg K_i/10}\normL{2}{\rho_{K_i}(\tau-\omega)\F_t\left\{\mathbb{1}_{[0,T]}\F_t^{-1}\left(\rho_{K}(\tau'-\omega)\F\left(\widetilde{u_{M_i}}\right)\right)\right\}}\\
= I + II
\end{multline*}
To treat $I$, we use that ${\displaystyle \left|\widehat{\mathbb{1}_{[0,T]}}(\tau-\tau')\right| \infeg |\tau-\tau'|^{-1}\sim K_i^{-1}}$ since $|\tau-\omega|\sim K_i$ and $|\tau'-\omega|\sim K \infeg K_i/10$. Thus, from Young inequality $L^{\infty}\times L^1 \rightarrow L^{\infty}$ we deduce that
\[K_i^{1/2}\cdot I\lesssim K_i\norme{L^2_{\xi,q}L^{\infty}_{\tau}}{\widehat{\mathbb{1}_{[0,T]}}\star_{\tau}\left(\rho_{K}(\tau'-\omega)\F\left(\widetilde{u_{M_i}}\right)\right)} \lesssim \norme{L^2_{\xi,q}L^1_{\tau}}{\rho_{K}(\tau'-\omega)\F\left(\widetilde{u_{M_i}}\right)}\]
which is enough for (\ref{estimation controle terme de bord estimation energie}) due to (\ref{estimation controle norme L2L1}) and then (\ref{propriete XM})-(\ref{estimation cle localisation XM}).

To deal with $II$, we simply neglect the localization $\rho_{K_i}(\tau-\omega)$, use Plancherel identity, then neglect the localization $\mathbb{1}_{[0;T]}$ and use Plancherel identity again and that $K_i^{1/2}\lesssim K^{1/2}$ to get
\[K_i^{1/2}\cdot II\lesssim \sum_{K\supeg K_i/10}K^{1/2}\norme{L^2_{\xi,q,\tau}}{\rho_{K}(\tau'-\omega)\F\left(\widetilde{u_{M_i}}\right)}\lesssim \norme{\X{M_i}^0}{\F\left(\widetilde{u_{M_i}}\right)}\]
This proves (\ref{estimation controle terme de bord estimation energie}) after using again (\ref{propriete XM})-(\ref{estimation cle localisation XM}).

Coming back to (\ref{estimation G1 estimation energie}) and using (\ref{estimation controle terme de bord estimation energie}) along with $\#\mathcal{B}\infeg 4$, we then infer
\[G_1 \lesssim \crochet{\ln\left(M_1M_2M_3\right)}^3(M_{min}\et M_{min}^{-1})^{1/2}M_{max}^{-1}\prod_{i=1}^3\norme{\overline{\fl{M_i}^0}}{u_i}\]
as ${\displaystyle \sum_{(K_1,K_2,K_3)\in A_1}1 \lesssim \crochet{\ln\left(M_1M_2\right)}^3}$. This is enough for (\ref{equation energie forme trilineaire}).

Let us now turn to $G_2$. We use (\ref{estimation forme trilineaire grande modulation cas pire}) combined with (\ref{estimation controle terme de bord estimation energie}) to get
\begin{multline*}
G_2 \lesssim (1\et M_{min})^{1/4}M_{min}^{0+}M_{max}^{(-1)+}\sum_{(K_1,K_2,K_3)\in A_2}K_{max}^{0-} \prod_{i=1}^3\sup_{n\in\mathcal{B}}\sup_{K_i}K_i^{1/2}\normL{2}{g_{i,n}^{K_i}}\\
\lesssim M_{max}^{(-1)+}\prod_{i=1}^3\norme{\overline{\fl{M_i}^0}}{u_{M_i}}
\end{multline*}
which is sufficient as well.

Finally, we treat $G_3$, using now (\ref{estimation forme trilineaire grande modulation}) and (\ref{estimation controle terme de bord estimation energie}) :
\begin{multline*}
G_3 \lesssim (1\ou M_{min})^{(1/4)+}M_{max}^{(-5/4)+}\sum_{(K_1,K_2,K_3)\in A_3}K_{max}^{0-}\prod_{i=1}^3\sup_{n\in\mathcal{B}}\sup_{K_i}\normL{2}{g_{i,n}^{K_i}}\\
\lesssim M_{max}^{(-1)+}\prod_{i=1}^3\norme{\overline{\fl{M_i}^0}}{u_{M_i}}
\end{multline*}
which concludes the proof of lemma~\ref{lemme estimation trilineaire}.
\end{proof}

Following \cite[Lemme 6.1 (b)]{IonescuKenigTataru2008}, we then use the previous estimate to control the special terms in the energy estimate~\ref{estimation d'energie} :
\begin{lemme}
Let $T\in]0;1]$, $b_1\in [0;1/8]$, $M,M_1\in 2^{\Z}$, with $M\supeg 10(1\ou M_1)$, and $u\in \overline{\fl{M}^{b_1}}(T)$, $v\in \overline{\fl{M_1}^{b_1}}(T)$. Then
\begin{multline}\label{estimation terme specifique estimation energie}
\left|\int_{[0,T]\times\R\times\Tl}P_M u\cdot P_M (P_{M_1}v\cdot\drx u)\dt\dx\dy \right|\\
 \lesssim M_1\Lambda_{b_1}(M_1,M)\norme{\overline{\fl{M_1}^{b_1}}(T)}{P_{M_1}v}\sum_{M_2\sim M}\norme{\overline{\fl{M_2}^{b_1}}(T)}{P_{M_2} u}^2
\end{multline}
\end{lemme}
\begin{proof}
First, we chop the integral in the left-hand side of (\ref{estimation terme specifique estimation energie}) into two terms
\begin{multline*}
\int_{[0,T]\times\R\times\Tl}P_M u\cdot P_M (\drx u P_{M_1}v) \\ = \int_{[0,T]\times\R\times\Tl}P_M u\cdot P_M\drx u\cdot P_{M_1}v + \int_{[0,T]\times\R\times\Tl}P_M u\cdot [P_M (\drx u P_{M_1}v)- P_M\drx u\cdot P_{M_1}v] \\= I + II
\end{multline*}
The first term is easy to control : integrating by parts and using (\ref{equation energie forme trilineaire}), we get the bound
\begin{multline*}
\left|I\right| = \left| \frac{1}{2}\int_{[0,T]\times\R\times\Tl}(P_M u)^2\cdot \drx P_{M_1}v\right|\\
\lesssim M_1\Lambda_{b_1}(M_1,M)\norme{\overline{\fl{M}^{b_1}}(T)}{P_M u}^2\norme{\overline{\fl{M_1}^{b_1}}(T)}{P_{M_1}v}
\end{multline*}
To deal with $II$, we proceed as for the previous lemma : after choosing some extensions (still denoted $u\in \overline{\fl{M}^{b_1}}$ and $v\in \overline{\fl{M_1}^{b_1}}$) of $u$ and $v$ to $\R$, we chop the integral in
\[
II=\sum_{n\in \Z}\int_{\R^2\times\Tl}P_Mu_n \cdot [P_M (\drx u_n P_{M_1}v_n)- P_M\drx u_n\cdot P_{M_1}v_n]
\]
where we define $u_n:= \mathbb{1}_{[0,T]}\gamma(Mt-n)u$ and $v_n:=\mathbb{1}_{[0,T]}\gamma(Mt-n)v$ for a function $\gamma$ as in the previous lemma.

Using Plancherel identity, we can write $II$ as
\[
II = \sum_{n\in\Z} \int_{\R^2\times\Zl} \widehat{P_M u_n}\cdot \int_{\R^2\times\Zl} K(\zeta,\zeta_1)\widehat{u_n}(\zeta-\zeta_1)\widehat{\drx P_{M_1}v_n}(\zeta_1)\dzeta_1\dzeta
\]
where the kernel $K$ is given by
\[
K(\zeta,\zeta_1) = \frac{\xi-\xi_1}{\xi_1}\left[\eta_M(\xi)-\eta_M(\xi-\xi_1)\right]\widetilde{\eta_{M_1}}(\xi_1)\sum_{M_2\sim M}\eta_{M_2}(\xi-\xi_1)
\]
The last sum appears since $|\xi|\sim M$ and $|\xi_1|\sim M_1\infeg M/10$, thus $|\xi-\xi_1|\sim M$.\\
Using the mean value theorem, we can bound the kernel with
\begin{equation}\label{estimation noyau estimation energie}
\left|K(\zeta,\zeta_1)\right|\lesssim\left|\frac{\xi-\xi_1}{\xi_1}\right|M^{-1}|\xi_1|\widetilde{\eta_{M_1}}(\xi_1)\sum_{M_2\sim M}\eta_{M_2}(\xi-\xi_1) \lesssim \widetilde{\eta_{M_1}}(\xi_1)\sum_{M_2\sim M}\eta_{M_2}(\xi-\xi_1)
\end{equation}
Therefore, as in \cite[Lemma 6.1 (b)]{IonescuKenigTataru2008}, (\ref{estimation terme specifique estimation energie}) follows after repeating the proof of (\ref{equation energie forme trilineaire}) and using (\ref{estimation noyau estimation energie}).
\end{proof}
We finally prove (\ref{estimation d'energie}). From now on, we fix $b_1=1/8$ and drop the parameter when writing the main spaces.
\begin{proposition}\label{proposition estimation energie}
Let $T\in ]0;1]$ and $u\in\mathcal{C}([-T,T],\El^{\infty})$ be a solution of
\begin{equation}\label{equation energie KP1}
\begin{cases} \drt u + \drx^3 u - \drx^{-1}\dry u +u\drx u = 0\\u(0,x)=u_0(x)\end{cases}
\end{equation}
on $[-T,T]$.
Then for any $\alpha \supeg 1$,
\begin{equation}\label{estimation energie periode}
\norme{\Bl^{\alpha}(T)}{u}^2\lesssim \norme{\El^{\alpha}}{u_0}^2 + \norme{\Fl(T)}{u}\norme{\Fl^{\alpha}(T)}{u}^2
\end{equation}
\end{proposition}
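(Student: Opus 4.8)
The plan is to perform the standard frequency-localized energy estimate, applying $P_M$ to \eqref{equation energie KP1}, multiplying by $p(D)^2 P_M u$ (or more precisely by $\langle\xi\rangle^{2\alpha}p^2 P_M u$ in Fourier), integrating over $[0,T]\times\R\times\T_\lambda$ in space-time, and then integrating by parts in time. For each dyadic $M$, the linear terms $\partial_t u+\partial_x^3 u-\partial_x^{-1}\partial_y^2 u$ are skew-adjoint, so they contribute only the boundary term $\frac12\big(\|P_M u(t_M)\|_{\El^\alpha}^2-\|P_M u_0\|_{\El^\alpha}^2\big)$, for any $t_M\in[-T,T]$ chosen so as to realize the supremum in the definition \eqref{definition espace energie} of $\Bl^\alpha(T)$; taking the sup over $t_M$ and summing over $M>1$ (plus the low-frequency piece $P_{\le 1}u_0$) reconstructs $\|u\|_{\Bl^\alpha(T)}^2$ on the left and $\|u_0\|_{\El^\alpha}^2$ on the right. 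What remains is to estimate the nonlinear contribution, which after commuting $P_M$ and the weights takes the schematic form
\[
\sum_{M>1}\langle M\rangle^{2\alpha}\Big|\int_{[0,T]\times\R\times\T_\lambda} P_M u\cdot P_M\big(u\,\drx u\big)\,\dt\dx\dy\Big|
\]
together with analogous contributions coming from the weight $p(\xi,q)$ (i.e. the $\drx^{-1}\dry$ part of the $\El^\alpha$ norm), all of which must be bounded by $\|u\|_{\Fl(T)}\|u\|_{\Fl^\alpha(T)}^2$.

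The key step is to decompose $u=\sum_{M_1}P_{M_1}u$ inside the nonlinearity and split according to the interaction type, exactly as in the bilinear estimates: the $High\times High\to High$ "diagonal" part where all three frequencies are comparable, the $Low\times High\to High$ part where $M_1\ll M\sim M_2$, and the purely low-frequency part. For the generic interactions one writes $u\,\drx u=\frac12\drx(u^2)$, integrates by parts to move the derivative, and applies the trilinear estimate \eqref{equation energie forme trilineaire} of Lemma~\ref{lemme estimation trilineaire} to $(P_{M_1}u, P_{M_2}u, P_M u)$; the gain $\Lambda_{b_1}(M_{min},M_{max})$ there, which is summable in the dyadic parameters because $b_1=1/8>0$ and because of the weight $\beta_{M,K}$ built into $\Fl$, is precisely what prevents the logarithmic divergence. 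The delicate interaction is $Low\times High\to High$ with the derivative falling on the low-frequency factor: there $\drx(P_{M_1}v)$ is \emph{not} small, and a naive application of the trilinear bound loses a power of $M$. This is exactly the configuration handled by \eqref{estimation terme specifique estimation energie}: one writes the commutator $P_M(\drx u\cdot P_{M_1}v)-P_M\drx u\cdot P_{M_1}v$, uses the mean value theorem on the symbol $\eta_M(\xi)-\eta_M(\xi-\xi_1)$ to recover the missing derivative (the kernel bound \eqref{estimation noyau estimation energie}), and then the remaining term $P_M\drx u\cdot P_{M_1}v$ against $P_M u$ is symmetrized via integration by parts into $\frac12\int (P_Mu)^2\drx P_{M_1}v$, to which the trilinear estimate applies cleanly.

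Finally one sums: for each fixed $M$, the $Low\times High$ contributions are controlled by $\sum_{M_1\lesssim M}M_1\Lambda_{b_1}(M_1,M)\|P_{M_1}u\|_{\overline{\fl{M_1}^{b_1}}(T)}\sum_{M_2\sim M}\|P_{M_2}u\|_{\overline{\fl{M_2}^{b_1}}(T)}^2$; multiplying by $\langle M\rangle^{2\alpha}$, using $\langle M\rangle^{2\alpha}\sim\langle M_2\rangle^{2\alpha}$ and \eqref{equation decomposition norme F}, and summing the geometric factors $M_1\Lambda_{b_1}(M_1,M)$ over $M_1\lesssim M$ (here again $b_1>0$ is essential for convergence) yields the bound $\|u\|_{\Fl(T)}\|u\|_{\Fl^\alpha(T)}^2$; the $High\times High\to High$ and $Low\times Low$ pieces are easier and summed the same way using Cauchy--Schwarz in the dyadic indices. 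The contributions arising from the $p(\xi,q)$ weight are estimated identically, after expressing $p(\xi_1+\xi_2,q_1+q_2)$ in terms of $p(\xi_i,q_i)$ as in \eqref{estimation bilineaire lhh poids 1}--\eqref{estimation bilineaire lhh poids 2} so that the trilinear estimate can still be invoked. The main obstacle is therefore the resonant $Low\times High\to High$ interaction with the derivative on the low frequency, which is precisely why Lemma \ref{estimation terme specifique estimation energie} is isolated beforehand; once it is in hand the rest is bookkeeping of dyadic sums.
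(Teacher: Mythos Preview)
Your proposal is essentially correct and follows the paper's strategy: frequency-localize, use skew-adjointness of the linear part, decompose the nonlinearity by interaction type, invoke Lemma~\ref{lemme estimation trilineaire} for the generic pieces, and use the commutator estimate (\ref{estimation terme specifique estimation energie}) for the dangerous $Low\times High\to High$ term with the derivative on the high-frequency factor.

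One point where you depart from the paper is the handling of the weight $p$. You propose distributing $p(\xi_1+\xi_2,q_1+q_2)$ onto the inputs via (\ref{estimation bilineaire lhh poids 1})--(\ref{estimation bilineaire lhh poids 2}). The paper instead splits $\|P_{M_3}u\|_{\El^\alpha}^2\sim M_3^{2\alpha}\|P_{M_3}u\|_{L^2}^2+M_3^{2(\alpha-1)}\|P_{M_3}\drx^{-1}\dry u\|_{L^2}^2$ (for $M_3>1$) and treats the two pieces separately. For the second piece one applies $P_{M_3}\drx^{-1}\dry$ to the equation and uses the algebraic identity $\drx^{-1}\dry(u\,\drx u)=u\,\dry u=u\,\drx(\drx^{-1}\dry u)$, so that with $v:=\drx^{-1}\dry u$ the nonlinear term has \emph{exactly the same structure} as in the unweighted case, and the unweighted lemmas (\ref{equation energie forme trilineaire}) and (\ref{estimation terme specifique estimation energie}) apply verbatim with $v$ replacing some copies of $u$. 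This sidesteps the need for weighted versions of those lemmas; your route would require more care, since you actually have $p^2$ on the output and (\ref{estimation bilineaire lhh poids 1})--(\ref{estimation bilineaire lhh poids 2}) only control one power of $p$.

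One minor imprecision: you write ``summing the geometric factors $M_1\Lambda_{b_1}(M_1,M)$ over $M_1\lesssim M$''. That sum does not converge on its own. The paper instead uses Cauchy--Schwarz in $M_1$ against the weight $(1\vee M_1)$ built into $\|u\|_{\Fl}$, verifying that $\sum_{M_1\le M}(1\vee M_1)^{-2}M_1^2\Lambda_{b_1}(M_1,M)^2\lesssim 1$; it is in this sum that $b_1>0$ is needed, for the term $(M_1/M)^{4b_1}$.
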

\begin{proof}
Using the definitions of $\Bl^{\alpha}(T)$ (\ref{definition espace energie})  and $p$ (\ref{definition p}) along with (\ref{equation decomposition norme F}), it suffices to prove
\begin{multline} \label{equation energie dyadique}
\sum_{M_3\supeg 1} \sup_{t_{M_3}\in [-T;T]}M_3^{2\alpha}\normL{2}{P_{M_3}u(t_{M_3})}^2-M_3^{2\alpha}\normL{2}{P_{M_3} u_0}^2\\ \lesssim \norme{\Fl(T)}{u}\sum_{M_3\supeg 1}M_3^{2\alpha}\norme{\fl{M_3}^{b_1}(T)}{P_{M_3}u}^2
\end{multline}
and
\begin{multline}\label{equation energie dyadique poids}
\sum_{M_3\supeg 1} \sup_{t_{M_3}\in [-T;T]}M_3^{2(\alpha-1)}\normL{2}{P_{M_3}\drx^{-1}\dry u(t_{M_3})}^2-M_3^{2(\alpha-1)}\normL{2}{P_{M_3}\drx^{-1}\dry u_0}^2\\ \lesssim \norme{\Fl(T)}{u}\sum_{M_3\supeg 1}M_3^{2\alpha}\norme{\fl{M_3}^{b_1}(T)}{P_{M_3}u}^2
\end{multline}

Let us start with (\ref{equation energie dyadique}).\\
Applying $P_{M_3}$ to (\ref{equation energie KP1}), multiplying by $P_{M_3}u$ and integrating, we get
\begin{multline}\label{equation energie integrale}
\normL{2}{P_{M_3}u(t_{M_3})}^2 - \normL{2}{P_{M_3}u_0}^2 = \int_0^{t_{M_3}}\ddt\normL{2}{P_{M_3} u(t)}^2\dt\\ \lesssim \left|\int_0^{t_{M_3}}\int_{\R\times\Tl} P_{M_3}u \cdot P_{M_3}(u\drx u )\dt'\dx\dy\right|
\end{multline}
since $\drx^3$ and $\drx^{-1}\dry^2$ are skew-adjoint.\\
We separate the right-hand side of (\ref{equation energie integrale}) in
\begin{align}
&\sum_{M_1\infeg M_3/10}\int_{[0,t_{M_3}]\times\R\times\Tl} P_{M_3}u\cdot P_{M_3}\left(P_{M_1}u\cdot\drx u\right)\dt\dx\dy \label{terme 1.1 estimation energie}\\& + \sum_{M_1\gtrsim M_3}\sum_{M_2>0} \int_{[0,t_{M_3}]\times\R\times\Tl}(P_{M_3})^2u\cdot P_{M_1}u\cdot \drx P_{M_2}u \dt\dx\dy
\label{terme 1.2 estimation energie}
\end{align}
Using (\ref{estimation terme specifique estimation energie}) and Cauchy-Schwarz inequality in $M_1$, the first term (\ref{terme 1.1 estimation energie}) is estimated by
\begin{multline*}
(\ref{terme 1.1 estimation energie}) \lesssim \sum_{M_1\infeg M_3/10}M_1\Lambda_{b_1}(M_1,M_3)\norme{\overline{\fl{M_1}^{b_1}}(T)}{P_{M_1}u}\sum_{M_2\sim M_3}\norme{\overline{\fl{M_2}^{b_1}}(T)}{P_{M_2} u}^2\\
\lesssim \norme{\Fl(T)}{u}\sum_{M_2\sim M}\norme{\overline{\fl{M_2}^{b_1}}(T)}{P_{M_2} u}^2
\end{multline*}
since
\[\left[\sum_{0<M_1\infeg M_3/10}(1\ou M_1)^{-2}M_1^2\Lambda_{b_1}(M_1,M_3)^2\right]^{1/2}\lesssim 1\]
Thus
\[\sum_{M_3\supeg 1}M_3^{2\alpha}\cdot(\ref{terme 1.1 estimation energie}) \lesssim \norme{\Fl(T)}{u}\norme{\Fl^{\alpha}(T)}{u}^2\]
To treat (\ref{terme 1.2 estimation energie}), we use (\ref{equation energie forme trilineaire}) and then we separate the sum on $M_2$ depending on whether $M_1\sim M_3\gtrsim M_2$ or $M_1\sim M_2 \gtrsim M_3$ : 
\begin{multline*}
(\ref{terme 1.2 estimation energie}) \lesssim\sum_{M_1\sim M_3}\sum_{M_2\lesssim M_3}M_2\Lambda_{b_1}(M_2,M_3)\prod_{i=1}^3\norme{\overline{\fl{M_i}^{b_1}}(T)}{P_{M_i}u}\\+\sum_{M_1\gtrsim M_3}\sum_{M_2\sim M_1}M_2\Lambda_{b_1}(M_3,M_2)\prod_{i=1}^3\norme{\overline{\fl{M_i}^{b_1}}(T)}{P_{M_i}u}\\ = I + II
\end{multline*}
Applying Cauchy-Schwarz inequality in $M_2$ we get the bounds
\[I \lesssim \norme{\overline{\fl{M_3}^{b_1}}(T)}{P_{M_3}u}^2\norme{\Fl(T)}{u}\]
and
\[II \lesssim \sum_{M_1\gtrsim M_3}M_1\Lambda_{b_1}(M_3,M_1)\norme{\overline{\fl{M_3}^{b_1}}(T)}{P_{M_3}u}\norme{\overline{\fl{M_1}^{b_1}}(T)}{P_{M_1}u}^2\]
Summing on $M_3$ and using Cauchy-Schwarz inequality in $M_3$ and $M_1$ for $II$, we finally get
\begin{multline*}
\sum_{M_3\supeg 1}M_3^{2\alpha}\cdot(\ref{terme 1.2 estimation energie})\lesssim \norme{\Fl(T)}{u}\norme{\Fl^{\alpha}(T)}{u}^2\\ + \sum_{M_3\supeg 1}\sum_{M_1\gtrsim M_3}M_3^{\alpha}\Lambda_{b_1}(M_3,M_1) M_1^{1+\alpha}\norme{\overline{\fl{M_3}^{b_1}}(T)}{P_{M_3}u}\norme{\overline{\fl{M_1}^{b_1}}(T)}{P_{M_1}u}^2\\
\lesssim \norme{\Fl(T)}{u}\norme{\Fl^{\alpha}(T)}{u}^2
\end{multline*}

Now we turn to the proof of (\ref{equation energie dyadique poids}).\\ This time, we apply $P_{M_3}\drx^{-1}\dry$ to (\ref{equation KP1}), we multiply by $P_{M_3}\drx^{-1}\dry u$ and we integrate to get
\begin{multline}\label{equation energie integrale poids}
\normL{2}{P_{M_3}\drx^{-1}\dry u(t_{M_3})}^2-\normL{2}{P_{M_3}\drx^{-1}\dry u_0}^2 \\ \lesssim \left|\int_{[0,t_{M_3}]\times\R\times\Tl}P_{M_3} \drx^{-1}\dry u \cdot P_{M_3}\drx^{-1}\dry (u\drx u)\dt\dx\dy\right|
\end{multline}
using again the skew-adjointness of $\drx^3$ and $\drx^{-1}\dry^2$.\\
The right-hand side of (\ref{equation energie integrale poids}) is similarly split up into
\begin{align}
&\sum_{M_1\infeg M_3/10}\int_{[0,t_{M_3}]\times\R\times\Tl}P_{M_3} \drx^{-1}\dry u \cdot P_{M_3}(P_{M_1}u\cdot\dry u)\dt\dx\dy\label{terme 2.1 estimation energie}\\
&+\sum_{M_1\gtrsim M_3}\sum_{M_2}\int_{[0,t_{M_3}]\times\R\times\Tl}(P_{M_3})^2 \drx^{-1}\dry u \cdot P_{M_1}u\cdot\dry P_{M_2}u\dt\dx\dy\label{terme 2.2 estimation energie}
\end{align}
Writing $v := \drx^{-1}\dry u$, using (\ref{estimation terme specifique estimation energie}) and Cauchy-Schwarz inequality in $M_1$, we obtain
\begin{multline*}
(\ref{terme 2.1 estimation energie})\lesssim \sum_{M_1\infeg M_3/10}M_1\Lambda_{b_1}(M_1,M_3)\norme{\overline{\fl{M_1}^{b_1}}(T)}{P_{M_1}u}\sum_{M_2\sim M_3}\norme{\overline{\fl{M_2}^{b_1}}(T)}{v_{M_2}}^2 \\
\lesssim \norme{\Fl(T)}{u}\sum_{M_2\sim M_3}\norme{\overline{\fl{M_2}^{b_1}}(T)}{P_{M_2}\drx^{-1}\dry u}^2
\end{multline*}
which is enough for (\ref{equation energie dyadique poids}) after summing on $M_3$.

As for (\ref{terme 2.2 estimation energie}), we separate again the sum on $M_2$ :
\begin{multline*}
(\ref{terme 2.2 estimation energie}) =I + II=\sum_{M_1\sim M_3}\sum_{M_2\lesssim M_3}\int_{[0,t_{M_3}]\times\R\times\Tl}(P_{M_3})^2 v \cdot P_{M_1}u\cdot\drx v_{M_2}\dt\dx\dy\\ + \sum_{M_1\gtrsim M_3}\sum_{M_2\sim M_1}\int_{[0,t_{M_3}]\times\R\times\Tl}(P_{M_3})^2 v \cdot P_{M_1}u\cdot\drx P_{M_2}v\dt\dx\dy
\end{multline*}
For the first term, we use again (\ref{equation energie forme trilineaire}) which gives
\[I \lesssim \sum_{M_1\sim M_3}\sum_{M_2\lesssim M_3}M_2\Lambda_{b_1}(M_2,M_3) \norme{\overline{\fl{M_3}^{b_1}}(T)}{P_{M_3}v}\norme{\overline{\fl{M_1}^{b_1}}(T)}{P_{M_1}u}\norme{\overline{\fl{M_2}^{b_1}}(T)}{P_{M_2}v}\]
We first sum on $M_2$ by using Cauchy-Schwarz inequality to get the bound
\[\sum_{M_1\sim M_3}M_3\norme{\overline{\fl{M_3}^{b_1}}(T)}{P_{M_3}v}\norme{\overline{\fl{M_1}^{b_1}}(T)}{P_{M_1}u} \norme{\Fl(T)}{u}\]
and then we can sum on $M_3$ using Cauchy-Schwarz inequality again to get (\ref{equation energie dyadique poids}) for this term.

For the second term, we apply also (\ref{equation energie forme trilineaire}), then we first sum on $M_3$ using Cauchy-Schwarz inequality and $M_3^{2(\alpha-1)}\lesssim M_1^{2(\alpha-1)}$ in this regime, and finally sum on $M_1$ using again Cauchy-Schwarz inequality to get (\ref{equation energie dyadique poids}).
\end{proof}
In the same spirit, following \cite{IonescuKenigTataru2008} we have for the difference equation
\begin{proposition}\label{proposition estimation energie difference}
Let $T\in ]0,1[$ and $u,v\in\Fl(T)$ satisfying
\begin{equation}\label{equation energie difference}
\begin{cases} \drt u + \drx^3 u - \drx^{-1}\dry u +\drx(uv) = 0\\u(0,x)=u_0(x)\end{cases}
\end{equation}
on $[-T,T]\times\R\times\Tl$.
Then
\begin{equation}\label{estimation energie difference periode}
\norme{\overline{\Bl}(T)}{u}^2\lesssim \norme{L^2_{\lambda}}{u_0}^2 + \norme{\Fl(T)}{v}\norme{\overline{\Fl}(T)}{u}^2
\end{equation}
\end{proposition}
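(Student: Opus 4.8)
The plan is to adapt the proof of Proposition~\ref{proposition estimation energie} to the $L^2$ level, using systematically the dissymmetry between $u$ (measured in $\overline{\Fl}$, carrying no derivative) and $v$ (measured in $\Fl$, carrying one derivative to spare). First I reduce to a dyadic statement: by~(\ref{definition espace energie difference}) the low-frequency part of $\norme{\overline{\Bl}(T)}{u}^2$ is simply $\norme{L^2}{P_{\infeg 1}u_0}^2\infeg\norme{L^2}{u_0}^2$, so it suffices to control $\sum_{M_3>1}\sup_{t_{M_3}\in[-T,T]}\bigl(\norme{L^2}{P_{M_3}u(t_{M_3})}^2-\norme{L^2}{P_{M_3}u_0}^2\bigr)$. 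For fixed $M_3>1$, applying $P_{M_3}$ to~(\ref{equation energie difference}), pairing with $P_{M_3}u$ and integrating over $[0,t_{M_3}]\times\R\times\Tl$, the skew-adjointness of $\drx^3$ and $\drx^{-1}\dry^2$ kills the linear part and leaves
\[
\norme{L^2}{P_{M_3}u(t_{M_3})}^2-\norme{L^2}{P_{M_3}u_0}^2\lesssim\Bigl|\int_{[0,t_{M_3}]\times\R\times\Tl}P_{M_3}u\cdot P_{M_3}\drx(uv)\Bigr|.
\]
Inserting Littlewood--Paley decompositions of $u$ and $v$ in $\drx(uv)$ and keeping only the nonvanishing frequency interactions, I split, as in~(\ref{terme 1.1 estimation energie})--(\ref{terme 1.2 estimation energie}), into three regimes according to which of the three frequencies is the smallest: $\mathrm{Low}(u)\times\mathrm{High}(v)$, $\mathrm{High}(u)\times\mathrm{Low}(v)$ (with the low frequency $\infeg M_3/10$), and $\mathrm{High}\times\mathrm{High}\to\mathrm{Low}$.

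In the regime $\mathrm{High}(u)\times\mathrm{Low}(v)$ I distribute $\drx$ by Leibniz. When $\drx$ hits the high, output-matching factor $u$, the resulting term is exactly of the form controlled by~(\ref{estimation terme specifique estimation energie}); summing the bound $M_1\Lambda_{b_1}(M_1,M_3)\norme{\overline{\fl{M_1}^{b_1}}(T)}{P_{M_1}v}\sum_{M_2\sim M_3}\norme{\overline{\fl{M_2}^{b_1}}(T)}{P_{M_2}u}^2$ over $M_1\infeg M_3/10$ by Cauchy--Schwarz costs the finite factor $\bigl(\sum_{M_1}(M_1(1\ou M_1)^{-1}\Lambda_{b_1}(M_1,M_3))^2\bigr)^{1/2}$ — the missing weight $(1\ou M_1)$ being supplied by $\norme{\Fl(T)}{v}$ — and the sum over $M_3$ then closes. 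When $\drx$ hits the low factor $v$, I use~(\ref{equation energie forme trilineaire}) with $\drx P_{M_1}v\sim M_1P_{M_1}v$, and the same combination $M_1(1\ou M_1)^{-1}\Lambda_{b_1}(M_1,M_3)$ is square-summable against $\norme{\Fl(T)}{v}$. In the regime $\mathrm{High}\times\mathrm{High}\to\mathrm{Low}$ I integrate $\drx$ by parts onto $P_{M_3}u$ and apply~(\ref{equation energie forme trilineaire}) to $\int\drx P_{M_3}u\cdot P_{M_3}(P_{M_1'}u\cdot P_{M_2'}v)$ with $M_1'\sim M_2'\gtrsim M_3>1$; the cost $M_3$ is absorbed by writing $\norme{L^2}{P_{M_2'}v}=(1\ou M_2')^{-1}(1\ou M_2')\norme{L^2}{P_{M_2'}v}$, and $M_3(1\ou M_2')^{-1}\Lambda_{b_1}(M_3,M_2')$ is square-summable in $M_3$; the weight $\beta_{M,K}$ built into the $\overline{\fl{M}^{b_1}}$ norms, already exploited in Lemma~\ref{lemme estimation trilineaire}, prevents the logarithmic divergence from the high-modulation sum.

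The only delicate regime is $\mathrm{Low}(u)\times\mathrm{High}(v)$, i.e. $\sum_{M_1\infeg M_3/10}\int P_{M_3}u\cdot P_{M_3}\drx(P_{M_1}u\cdot P_{M_2}v)$ with $M_2\sim M_3$. The sub-term where $\drx$ falls on the low factor $u$ is harmless, producing the square-summable coefficient $M_1M_3^{-1}\Lambda_{b_1}(M_1,M_3)$. The \emph{main obstacle} is the sub-term $\sum_{M_1\infeg M_3/10}\int P_{M_3}u\cdot P_{M_3}(P_{M_1}u\cdot\drx P_{M_2}v)$: here $\drx$ sits on the high factor, but that factor is $v$, so~(\ref{estimation terme specifique estimation energie}) does not apply; and while~(\ref{equation energie forme trilineaire}) handles the range $1<M_1\infeg M_3/10$ (there $\sum_{M_1}\Lambda_{b_1}(M_1,M_3)^2\lesssim1$), it fails for $M_1\infeg1$ because $\Lambda_{b_1}(M_1,M_3)$ contains a piece $\sim M_3^{-2b_1}$ with no gain in the arbitrarily small frequency $M_1$, so $\sum_{M_1\infeg1}\Lambda_{b_1}(M_1,M_3)^2$ diverges. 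I resolve this range by \emph{not} breaking the low $u$-frequencies against $\Lambda_{b_1}$, but pairing the two $u$-factors through a bilinear Strichartz estimate: Cauchy--Schwarz in $(t,x,y)$ bounds the contribution of each $M_1\infeg1$ by $\norme{L^2([0,T]\times\R\times\Tl)}{P_{M_3}u\cdot P_{M_1}u}\cdot\norme{L^2([0,T]\times\R\times\Tl)}{\drx P_{M_2}v}$, where the second factor is $\lesssim M_3\norme{\overline{\fl{M_3}^{b_1}}(T)}{P_{M_2}v}$ (using $T\infeg1$ and~(\ref{estimation injection continue difference})), while Proposition~\ref{estimation Strichartz localisee} — applied with $b_1=0$ spaces, which are dominated by the $\overline{\fl{}}$ norms and, being at regularity $0$, are not affected by the time-localization restriction noted after Proposition~\ref{propriete XM} — yields, after summing the modulations and re-localizing $P_{M_1}u$ on intervals of length $M_3^{-1}$, the bound $\norme{L^2([0,T]\times\R\times\Tl)}{P_{M_3}u\cdot P_{M_1}u}\lesssim M_1^{1/2}\norme{\overline{\fl{M_3}^{b_1}}(T)}{P_{M_3}u}\norme{\overline{\fl{M_1}^{b_1}}(T)}{P_{M_1}u}$. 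Since $\sum_{M_1\infeg1}M_1^{1/2}\norme{\overline{\fl{M_1}^{b_1}}(T)}{P_{M_1}u}\lesssim\norme{\overline{\Fl}(T)}{u}$ (as $\sum_{M_1\infeg1}M_1<\infty$) and $M_3\norme{L^2}{P_{M_2}v}\sim(1\ou M_3)\norme{L^2}{P_{M_2}v}$ is absorbed into $\norme{\Fl(T)}{v}$, the sum over $M_3>1$ closes this term as well. Collecting the three regimes and summing over the dyadic parameters yields~(\ref{estimation energie difference periode}); the upshot is that the low-$u$/differentiated-high-$v$ interaction at very low $u$-frequency is precisely where the trilinear machinery of Lemma~\ref{lemme estimation trilineaire} is not sharp enough and one must invoke the frequency-localized Strichartz estimates, using crucially that it is $v$, and not $u$, that carries the extra derivative.
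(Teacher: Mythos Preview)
Your overall strategy---energy identity, dyadic splitting into the three frequency regimes, and handling the bulk of the interactions via Lemma~\ref{lemme estimation trilineaire} and the commutator estimate~(\ref{estimation terme specifique estimation energie})---is exactly the paper's. The paper in fact deduces this proposition in one line from Proposition~\ref{proposition estimation energie difference specifique}: writing $\drx(uv)=v\,\drx u+u\,\drx v$, the piece $v\,\drx u$ is the ``$v\,\drx u$'' term there (commutator trick applies), and $u\,\drx v$ is a ``$w\cdot w'$'' term with $w=u$, $w'=\drx v$, handled directly by~(\ref{equation energie forme trilineaire}).

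Where you diverge is in your ``main obstacle'', the $\mathrm{Low}(u)\times\mathrm{High}(\drx v)$ interaction for $M_1\le 1$. You are right that the \emph{stated} $\Lambda_{b_1}(M_1,M_3)$ contains a piece $M_3^{-2b_1}$ with no decay in $M_1$, so $\sum_{M_1\le 1}\Lambda_{b_1}(M_1,M_3)^2$ diverges. But the paper does not need the Cauchy--Schwarz/bilinear-Strichartz detour you propose: if you go back to the proof of Lemma~\ref{lemme estimation trilineaire}, every contribution for $M_{\min}\le 1$ actually carries a positive power of $M_{\min}$. Case~(a) of Corollary~\ref{proposition estimation bilineaire dyadique synthese} gives $M_{\min}^{1/2}$ (this is precisely where~(\ref{estimation Strichartz basse modulation}) is used, as the paper notes); case~(b) gives $M_{\min}^{1/4}$ and then the weight $\beta_{M_1,K_1}^{b_1}$ yields $M_{\min}^{1/4-b_1}=M_{\min}^{1/8}$; and in case~(c), redoing the bound from~(\ref{estimation Strichartz grossiere}) with $K_{\max}\gtrsim M_{\min}M_{\max}^2$ gives $M_{\min}^{1/4}M_{\max}^{-5/4}$ rather than merely $(1\vee M_{\min})^{1/4}M_{\max}^{-5/4}$. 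Hence the effective trilinear bound for $M_{\min}\le 1$ is $\lesssim M_{\min}^{1/8}$, which \emph{is} square-summable, and the paper's one-line application of~(\ref{equation energie forme trilineaire}) closes.

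Your alternative route would also work in principle, but the key inequality you invoke, $\norme{L^2([0,T]\times\R\times\Tl)}{P_{M_3}u\cdot P_{M_1}u}\lesssim M_1^{1/2}\norme{\overline{\fl{M_3}^{b_1}}(T)}{P_{M_3}u}\norme{\overline{\fl{M_1}^{b_1}}(T)}{P_{M_1}u}$, is not proved anywhere in the paper: Proposition~\ref{estimation Strichartz localisee} only bounds $\norme{L^2}{\mathbb{1}_{D_{\lambda,M,\le K}}\,u_1\star u_2}$ with an output modulation cutoff, and removing that cutoff while keeping the $M_1^{1/2}$ gain (and absorbing the $M_3^{1/2}$ coming from summing $\sim M_3$ time intervals) requires an additional argument you have not supplied. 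So your fix is more work, not less, than simply sharpening $\Lambda_{b_1}$ as above.
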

and to deal with the equation satisfied by $P_{High}\drx (u_1-u_2)$ we need
\newpage
\begin{proposition}\label{proposition estimation energie difference specifique}
Let $T\in ]0;1]$ and $u\in \overline{\Fl}(T)$ with $u = P_{High}u$. Moreover, let $v\in\Fl(T)$, $w_i\in \Fl(T)$, $i=1,2,3$, and $w_i'\in \overline{\Fl}(T)$, $i=1,2,3$ and finally $h\in\overline{\Fl}(T)$ with $h=P_{\infeg 1}h$. Assume that $u$ satisfies
\begin{equation}\label{equation estimation energie difference specifique}
\drt u + \drx^3u-\drx^{-1}\dry^2 u = P_{High}(v\drx u)+\sum_{i=1}^3P_{High}(w_i w_i')+P_{High}h
\end{equation}
on $[-T;T]\times\R\times\Tl$. Then
\begin{equation}\label{estimation energie difference specifique}
\norme{\overline{\Bl}(T)}{u}^2\lesssim \norme{L^2_{\lambda}}{u_0}^2+\norme{\Fl(T)}{v}\norme{\overline{\Fl}(T)}{u}^2+\norme{\overline{\Fl}(T)}{u}\sum_{i=1}^3\norme{\overline{\Fl}(T)}{w_i}\norme{\overline{\Fl}(T)}{w_i'}
\end{equation}
\end{proposition}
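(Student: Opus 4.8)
The plan is to follow the proof of Proposition~\ref{proposition estimation energie difference} and to treat the extra source terms $P_{High}(w_iw_i')$ and $P_{High}h$ as lower-order contributions. By the definition~(\ref{definition espace energie difference}) of $\overline{\Bl}(T)$, and since $u=P_{High}u$, it suffices to bound $\sum_{M_3\gtrsim 1}\sup_{t_{M_3}\in[-T;T]}\big|\,\normL{2}{P_{M_3}u(t_{M_3})}^2-\normL{2}{P_{M_3}u_0}^2\,\big|$. First I would apply $P_{M_3}$ to~(\ref{equation estimation energie difference specifique}), pair with $P_{M_3}u$ and integrate in $x,y$; since $\drx^3$ and $\drx^{-1}\dry^2$ are skew-adjoint this leaves
\[\ddt\normL{2}{P_{M_3}u}^2 = 2\int_{\R\times\Tl}P_{M_3}u\cdot P_{M_3}\Big(P_{High}(v\drx u)+\sum_{i=1}^3 P_{High}(w_iw_i')+P_{High}h\Big),\]
so after integrating over $[0,t_{M_3}]$ and summing in $M_3$ one is left with three families of space–time multilinear integrals.

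For the term $P_{High}(v\drx u)$ I would reproduce the argument of Proposition~\ref{proposition estimation energie difference} almost verbatim: decompose $v=\sum_{M_1}P_{M_1}v$ and $u=\sum_{M_2}P_{M_2}u$ and split according to the interaction. The delicate regime is $Low\times High$, $M_1\infeg M_3/10$, where the derivative falls on the high-frequency factor: a crude bound would lose a full derivative, so one uses the commutator estimate of the specific-term lemma~(\ref{estimation terme specifique estimation energie}) (whose kernel obeys $|K|\lesssim\widetilde{\eta_{M_1}}(\xi_1)\sum_{M_2\sim M_3}\eta_{M_2}(\xi-\xi_1)$), after an integration by parts for the diagonal part; summing over $M_1$ with $\big[\sum_{M_1\infeg M_3/10}(1\ou M_1)^{-2}M_1^2\Lambda_{b_1}(M_1,M_3)^2\big]^{1/2}\lesssim 1$ and over $M_3$ by Cauchy--Schwarz produces $\norme{\Fl(T)}{v}\norme{\overline{\Fl}(T)}{u}^2$. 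In the $High\times High$ regime $M_1\gtrsim M_3$ one bounds $|\drx P_{M_2}u|$ by $M_2|P_{M_2}u|$ and applies Lemma~\ref{lemme estimation trilineaire}, the factor $M_2\Lambda_{b_1}$ being absorbed by Cauchy--Schwarz in the two comparable large frequencies exactly as in the proof of Proposition~\ref{proposition estimation energie}; the weight $\beta_{M,K}^{b_1}$ built into $\overline{\fl{M}^{b_1}}$ is what kills the logarithmic divergence in the high-modulation part.

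The terms $P_{High}(w_iw_i')$ carry no derivative and are therefore softer. I would write $\int P_{M_3}u\cdot P_{M_3}(w_iw_i')=\sum_{M_1,M_2}\int P_{M_3}u\cdot P_{M_2}w_i\cdot P_{M_1}w_i'$ (the integral forcing $M_{med}\sim M_{max}\gtrsim M_3\gtrsim 1$) and apply Lemma~\ref{lemme estimation trilineaire} to each triple, obtaining the bound $\Lambda_{b_1}(M_{min},M_{max})\norme{\overline{\fl{M_3}^{b_1}}(T)}{P_{M_3}u}\norme{\overline{\fl{M_2}^{b_1}}(T)}{P_{M_2}w_i}\norme{\overline{\fl{M_1}^{b_1}}(T)}{P_{M_1}w_i'}$. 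Since $b_1=1/8>0$ the multiplier~(\ref{definition Lambda estimation trilineaire}) is summable in $M_{min}$, so iterated Cauchy--Schwarz in $M_3$, then in $M_1$ and $M_2$, closes the sum at $\norme{\overline{\Fl}(T)}{u}\norme{\overline{\Fl}(T)}{w_i}\norme{\overline{\Fl}(T)}{w_i'}$. Finally, $P_{High}h$ with $h=P_{\infeg 1}h$ pairs nontrivially with $P_{M_3}u$ only for the finitely many scales $M_3\sim1$ on which $P_{M_3}P_{High}P_{\infeg 1}\neq0$, so the corresponding integral is controlled directly by Cauchy--Schwarz in time together with the embedding~(\ref{estimation injection continue difference}). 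The hard part of the whole argument is the $Low\times High$ interaction in the $v\drx u$ term: one must simultaneously recover the derivative --- via the commutator cancellation of~(\ref{estimation terme specifique estimation energie}) --- and avoid the logarithmic loss, via the weighted Bourgain-type space.
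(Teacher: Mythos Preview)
Your approach is essentially the paper's: reduce to the dyadic energy identity, handle $P_{High}(v\,\drx u)$ via the commutator lemma~(\ref{estimation terme specifique estimation energie}) in the $Low\times High$ regime and via Lemma~\ref{lemme estimation trilineaire} in the $High\times High$ regime, and treat the derivative-free terms $P_{High}(w_iw_i')$ by Lemma~\ref{lemme estimation trilineaire} plus Cauchy--Schwarz.

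There is one small gap. For the $h$ contribution you propose to bound the finitely many surviving integrals by Cauchy--Schwarz in time together with the embedding~(\ref{estimation injection continue difference}); but that would produce a term of the form $\norme{\overline{\Fl}(T)}{u}\,\norme{\overline{\Fl}(T)}{h}$ (or $\norme{L^\infty_TL^2}{h}$) on the right-hand side, and no such term appears in the stated estimate~(\ref{estimation energie difference specifique}). The paper's argument is instead that, since $h=P_{\infeg 1}h$ and the sum in $\overline{\Bl}(T)$ runs over $M_3>1$, the operator $P_{M_3}$ annihilates $h$ by frequency localization, so this contribution simply drops out. You should use that observation rather than estimate the term.
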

\begin{proof}
(\ref{estimation energie difference periode}) follows from (\ref{estimation energie difference specifique}) after splitting up $u$ into $P_{Low}u$ and $P_{High}u$ and observing that $P_{High}u$ satisfies an equation of type (\ref{equation estimation energie difference specifique}).\\

To prove (\ref{estimation energie difference specifique}), we follow the proof of proposition~\ref{proposition estimation energie}. Using the definitions of $\overline{\Bl}(T)$ (\ref{definition espace energie difference}), it suffices to prove
\begin{multline} \label{equation energie difference dyadique}
\sum_{M_3> 1} \sup_{t_{M_3}\in [-T;T]}\normL{2}{P_{M_3}u(t_{M_3})}^2-\normL{2}{P_{M_3} u_0}^2\\ \lesssim \norme{\Fl(T)}{v}\norme{\overline{\Fl}(T)}{u}^2+\norme{\overline{\Fl}(T)}{u}\sum_{i=1}^3\norme{\overline{\Fl}(T)}{w_i}\norme{\overline{\Fl}(T)}{w_i'}
\end{multline}
Take $M_3> 1$. Applying $P_{M_3}$ to (\ref{equation estimation energie difference specifique}), multiplying by $P_{M_3}u$ and integrating, we get
\begin{multline}\label{equation energie difference specifique integrale}
\normL{2}{P_{M_3}u(t_{M_3})}^2 - \normL{2}{P_{M_3}u_0}^2 = \int_0^{t_{M_3}}\ddt\normL{2}{P_{M_3} u(t)}^2\dt\\ \lesssim \left|\int_0^{t_{M_3}}\int_{\R\times\Tl} P_{M_3}u \cdot P_{M_3}P_{High}(u\drx v)\dt'\dx\dy\right|
\\ +\sum_{i=1}^3\left|\int_0^{t_{M_3}}\int_{\R\times\Tl} P_{M_3}u \cdot P_{M_3}P_{High}(w_iw_i')\dt'\dx\dy\right|
\end{multline}
since $\drx^3$ and $\drx^{-1}\dry^2$ are skew-adjoint. The term in $h$ vanishes after applying $P_{M_3}$, due to its frequency localization.\\

To treat the first term in the right-hand side of (\ref{equation energie difference specifique integrale}) we split it up in
\begin{align}
&\sum_{M_1\infeg M_3/10}\int_{[0,t_{M_3}]\times\R\times\Tl} P_{M_3}u\cdot P_{M_3}\left(P_{M_1}v\cdot\drx u\right)\dt\dx\dy \label{terme 1.1 estimation energie specifique}\\& + \sum_{M_1\gtrsim M_3}\sum_{M_2} \int_{[0,t_{M_3}]\times\R\times\Tl}(P_{M_3})^2u\cdot P_{M_1}v\cdot \drx P_{M_2}u \dt\dx\dy
\label{terme 1.2 estimation energie specifique}
\end{align}
The first term (\ref{terme 1.1 estimation energie specifique}) is estimated similarly to (\ref{terme 2.1 estimation energie}) with $\alpha=1$ and exchanging the roles of $u$ and $v$, whereas for (\ref{terme 1.2 estimation energie specifique}) we proceed as for (\ref{terme 2.2 estimation energie}).

To treat the second term in the right-hand side of (\ref{equation energie difference specifique integrale}), we perform a dyadic decomposition of $w_i$ and $w_i'$. By symmetry we can assume $M_1\infeg M_2$, thus either $M_1 \lesssim M_2\sim M_3$ or $M_3\lesssim M_1\sim M_2$. Then we apply (\ref{equation energie forme trilineaire}) to bound the sum on $M_3$ by
\begin{multline*}
\sum_{M_3\supeg 1}\sum_{M_2\sim M_3}\sum_{M_1\lesssim M_2}\Lambda_{b_1}(M_1,M_2)\\
\cdot\norme{\overline{\fl{M_3}^{b_1}}(T)}{P_{M_3} u}\norme{\overline{\fl{M_1}^{b_1}}(T)}{P_{M_1}w_i}\norme{\overline{\fl{M_2}^{b_1}}(T)}{P_{M_2}w_i'}\\
+\sum_{M_2\supeg 1}\sum_{M_1\sim M_2}\sum_{1\infeg M_3 \lesssim M_2}\Lambda_{b_1}(M_3,M_2)\\
\cdot\norme{\overline{\fl{M_3}^{b_1}}(T)}{P_{M_3} u}\norme{\overline{\fl{M_1}^{b_1}}(T)}{P_{M_1}w_i}\norme{\overline{\fl{M_2}^{b_1}}(T)}{P_{M_2}w_i'}
\end{multline*}
For the second term, we can just use Cauchy-Schwarz inequality in $M_3$ and $M_2$ since $M_1\sim M_2 \gtrsim M_3\supeg 1$. For the first term, we use that
\[\sum_{0<M_1\lesssim M_2}\Lambda_{b_1}(M_1,M_2)\norme{\overline{\fl{M_1}^{b_1}}(T)}{P_{M_1}w_i} \lesssim \norme{\overline{\Fl}(T)}{w_i}\]
Note that this is the only step where we need (\ref{estimation Strichartz basse modulation}) to avoid a logarithmic divergence when summing on very low frequencies, thus we do not need the extra decay for low frequency as in \cite{IonescuKenigTataru2008}.

Thus we finally obtain
\[\sum_{M\supeg 1}(\ref{terme 1.2 estimation energie specifique}) \lesssim \sum_{i=1}^3\norme{\overline{\Fl}(T)}{u}\norme{\overline{\Fl}(T)}{w_i}\norme{\overline{\Fl}(T)}{w_i'}\]
which concludes the proof of (\ref{estimation energie difference specifique}).
\end{proof}
\section{Proof of Theorem~\ref{theoreme principal}}\label{section preuve}
We finally turn to the proof of our main result. We follow the scheme of \cite[Section 6]{KenigPilod}.\\

We begin by recalling a local well-posedness result for smooth data :
\begin{proposition}\label{proposition existence haute regularite}
Assume $u_0\in \El^{\infty}$. Then there exists $T_{\lambda}\in]0;1]$ and a unique solution $u\in\mathcal{C}([-T_{\lambda};T_{\lambda}],\El^{\infty})$ of (\ref{equation KP1}) on $[-T_{\lambda};T_{\lambda}]\times\R\times\Tl$. Moreover, $T_{\lambda}=T(\norme{\El^3}{u_0})$ can be chosen as a nonincreasing function of $\norme{\El^3}{u_0}$.
\end{proposition}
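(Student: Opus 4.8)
The plan is to construct the solution by a parabolic regularization, together with \emph{a priori} energy estimates, a compactness argument and the Bona--Smith method, following \cite[Section 6]{KenigPilod} (see also \cite{IonescuKenigTataru2008,IorioNunes}). For $\mu>0$ I would look at
\[
\drt u^{\mu}+\mu\drx^{4}u^{\mu}+\drx^{3}u^{\mu}-\drx^{-1}\dry^{2}u^{\mu}+u^{\mu}\drx u^{\mu}=0 ,\qquad u^{\mu}(0)=u_{0} .
\]
The associated linear semigroup has Fourier multiplier $\mathrm{e}^{-\mu t\xi^{4}}\mathrm{e}^{it\omega(\xi,q)}$, which for $t\geq0$ is a contraction on every $\El^{k}$ and gains arbitrarily many $x$-derivatives at the cost of a negative power of $\mu t$; a standard fixed point on the Duhamel formula then produces, for every $k\geq3$, a maximal solution $u^{\mu}\in\mathcal{C}([-T_{\mu};T_{\mu}],\El^{k})$ (forward in time, then by the time-reversal symmetry of the equation), hence $u^{\mu}\in\mathcal{C}([-T_{\mu};T_{\mu}],\El^{\infty})$ since the equation propagates regularity. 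A useful preliminary remark is the algebraic identity
\[
\drx^{-1}\dry\bigl(u\drx u\bigr)=\drx^{-1}\dry\drx\bigl(\tfrac12u^{2}\bigr)=\dry\bigl(\tfrac12u^{2}\bigr)=u\dry u ,
\]
which shows that $w^{\mu}:=\drx^{-1}\dry u^{\mu}$ satisfies the \emph{transport-type} equation $\drt w^{\mu}+\mu\drx^{4}w^{\mu}+\drx^{3}w^{\mu}-\drx^{-1}\dry^{2}w^{\mu}+u^{\mu}\drx w^{\mu}=0$, so that the nonlocal operator $\drx^{-1}$ need never be applied to a generic product.

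The core step is to bound $u^{\mu}$ in $\El^{\alpha}$ on a time interval depending only on $\norme{\El^{3}}{u_{0}}$, \emph{uniformly in $\mu>0$}. Since $\drx^{3}$ and $\drx^{-1}\dry^{2}$ are skew-adjoint and the term $\mu\drx^{4}$ contributes a favorable sign, one differentiates $\norme{\El^{\alpha}}{u^{\mu}(t)}^{2}$, decomposes this norm into the $x$-Sobolev norms of $u^{\mu}$ and of $w^{\mu}$, and estimates the nonlinear contributions by commutator (Kato--Ponce/Moser) estimates carried out in the $x$ variable; the worst terms coming from the weighted part of the norm are handled through the transport structure of the $w^{\mu}$ equation rather than by a naive product estimate, and one uses the anisotropic Sobolev embeddings of $\El^{\alpha}$. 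This yields a closed differential inequality at the level $\El^{3}$, hence by Gronwall a time $T_{\lambda}=T(\norme{\El^{3}}{u_{0}})\in\,]0;1]$, non-increasing in $\norme{\El^{3}}{u_{0}}$, together with a uniform bound $\norme{\El^{3}}{u^{\mu}(t)}\leq C$ on $[-T_{\lambda};T_{\lambda}]$; plugging this into the estimates at the higher levels, which are linear in the top-order norm, propagates on the same interval the bounds $\norme{\El^{k}}{u^{\mu}(t)}\leq C(k,\norme{\El^{k}}{u_{0}})$, still uniformly in $\mu$. I expect this uniform \emph{a priori} estimate to be the main technical difficulty, precisely because $\El^{\alpha}$ is not a standard Sobolev space and the nonlocal weight $p$ interacts delicately with the nonlinearity; it is exactly there that the structural identity above is needed.

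Finally, the differences $u^{\mu}-u^{\mu'}$ solve a linear equation with source $(\mu'-\mu)\drx^{4}u^{\mu'}$ plus the difference of the nonlinearities; an $L^{2}$-type (i.e.\ $\El^{0}$) energy estimate, in which the source is controlled by the uniform higher-regularity bounds, shows that $(u^{\mu})_{\mu>0}$ is Cauchy in $\mathcal{C}([-T_{\lambda};T_{\lambda}],\El^{0})$. Its limit $u$ lies in $L^{\infty}([-T_{\lambda};T_{\lambda}],\El^{\infty})\cap\mathcal{C}([-T_{\lambda};T_{\lambda}],\El^{0})$ and solves (\ref{equation KP1}); weak-$*$ compactness gives $u\in\mathcal{C}_{w}([-T_{\lambda};T_{\lambda}],\El^{\infty})$, and the strong continuity $u\in\mathcal{C}([-T_{\lambda};T_{\lambda}],\El^{k})$ for every $k$ follows from a Bona--Smith argument (comparing $u$ with the solution issued from the mollified data $u_{0}^{\varepsilon}$ and letting $\varepsilon\to0$, using the energy estimates). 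Uniqueness is again obtained at the $\El^{3}$ level: if $u,v\in\mathcal{C}([-T;T],\El^{\infty})$ have the same data, then $u-v$ solves $\drt(u-v)+\drx^{3}(u-v)-\drx^{-1}\dry^{2}(u-v)+\tfrac12\drx\bigl((u-v)(u+v)\bigr)=0$, and since $u,v$ are fixed smooth functions a Gronwall inequality for $\norme{\El^{3}}{(u-v)(t)}^{2}$ forces $u\equiv v$.
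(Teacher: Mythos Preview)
Your proposal is correct and follows essentially the same approach as the paper: both rely on the standard energy method of \cite{IorioNunes} (in particular the Gronwall-type estimate $\norme{L^{\infty}_T\El^{\alpha}}{u}\le C_{\alpha}\norme{\El^{\alpha}}{u_0}\exp(\widetilde C_{\alpha}\norme{L^1_TL^{\infty}_{xy}}{\drx u})$ together with the Sobolev embedding $\norme{L^{\infty}_{xy}}{\drx u}\lesssim\norme{\El^{3}}{u}$). You simply flesh out the regularization and compactness steps that the paper leaves implicit, and you correctly isolate the structural identity $\drx^{-1}\dry(u\drx u)=u\drx(\drx^{-1}\dry u)$ that makes the weighted part of the $\El^{\alpha}$ energy estimate close.
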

\begin{proof}
This is a straightforward adaptation of \cite{IorioNunes} to the case of partially periodic data. Indeed, proposition~\ref{proposition existence haute regularite} follows from the standard energy estimate (see for example \cite[Lemma 1.3]{Kenig2004})
\begin{equation}\label{estimation energie standard}
\norme{L^{\infty}_T\El^{\alpha}}{u}\infeg C_{\alpha}\norme{\El^{\alpha}}{u_0}\exp\left(\widetilde{C}_{\alpha}\norme{L^1_TL^{\infty}_{xy}}{\drx u}\right)
\end{equation}
along with the Sobolev embedding
\[
\norme{L^1_TL^{\infty}_{xy}}{\drx u}\lesssim T\norme{L^{\infty}_T\El^3}{u}
\]
\end{proof}

\subsection{A priori estimates for smooth solutions}
In this subsection we improve the control on the previous solutions.
\begin{proposition}\label{proposition controle solutions regulieres}
There exists $\epsilon_0\in ]0;1]$ such that for $u_0\in \El^{\infty}$ with
\begin{equation}\label{condition taille donnee initiale}
\norme{\El}{u_0}\infeg \epsilon_0
\end{equation}
then there exists a unique solution $u$ to (\ref{equation KP1}) in $\mathcal{C}([-1;1],\El^{\infty})$, and it satisfies for $\alpha=1,2,3$,
\begin{equation}\label{estimation controle norme avec donnee}
\norme{\Fl^{\alpha}(1)}{u}\infeg C_{\alpha} \norme{\El^{\alpha}}{u_0}
\end{equation}
where $C_{\alpha}>0$ is a constant independent of $\lambda$.
\end{proposition}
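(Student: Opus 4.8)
The plan is to run a continuity (bootstrap) argument on the lifespan, combining the three estimates already proved: the linear estimate (\ref{equation estimation linéaire}), the bilinear estimate (\ref{estimation bilineaire globale}), and the energy estimate (\ref{estimation energie periode}), following the scheme of \cite[Section~6]{KenigPilod}. First I would invoke proposition~\ref{proposition existence haute regularite} to obtain, for $u_0\in\El^\infty$, a maximal smooth solution $u\in\mathcal{C}\bigl([-T_{max},T_{max}),\El^\infty\bigr)$ of (\ref{equation KP1}); since the local time $T(\norme{\El^3}{u_0})$ is $\lambda$-independent, nothing below will depend on $\lambda$. One checks, as in \cite{IonescuKenigTataru2008,KenigPilod}, that for every $T<\min(T_{max},1)$ the smooth solution belongs to $\Fl^\alpha(T)$ with finite norm for all $\alpha$, that $T\mapsto\norme{\Fl^\alpha(T)}{u}$ is non-decreasing and continuous on $(0,\min(T_{max},1)]$, and that $\limsup_{T\to 0^+}\norme{\Fl^1(T)}{u}\lesssim\norme{\El}{u_0}$ (the homogeneous part being handled by an estimate of the type (\ref{estimation lineaire homogene bf}), the Duhamel contribution being negligible for short times).

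Write $a_\alpha(T):=\norme{\Fl^\alpha(T)}{u}$ and $b_\alpha(T):=\norme{\Bl^\alpha(T)}{u}$, which are finite on $(0,\min(T_{max},1)]$ by smoothness. Since $u$ solves $\drt u+\drx^3u-\drx^{-1}\dry^2u=-\frac{1}{2}\drx(u^2)$, applying successively (\ref{estimation bilineaire globale}) (together with $\norme{\Fl^{\alpha,0}(T)}{\cdot}\infeg\norme{\Fl^{\alpha}(T)}{\cdot}$, valid because $\beta_{M,K}^{b_1}\supeg 1$ for $b_1=1/8$), then (\ref{equation estimation linéaire}), then (\ref{estimation energie periode}), one obtains a $\lambda$-independent constant $C$ with
\begin{equation*}
a_\alpha(T)\infeg C\bigl(b_\alpha(T)+a_\alpha(T)\,a_1(T)\bigr),\qquad b_\alpha(T)^2\infeg C\bigl(\norme{\El^\alpha}{u_0}^2+a_1(T)\,a_\alpha(T)^2\bigr).
\end{equation*}
Eliminating $b_\alpha$, for $\alpha=1$ this gives $a_1(T)\infeg C^{3/2}\norme{\El}{u_0}+\bigl(C^{3/2}a_1(T)^{1/2}+Ca_1(T)\bigr)a_1(T)$. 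Choose $\eta>0$ small enough that $C^{3/2}\eta^{1/2}+C\eta\infeg 1/2$ and $4C^3\eta\infeg 1/2$, and then $\epsilon_0>0$ small enough that $2C^{3/2}\epsilon_0\infeg\eta/2$ and $\limsup_{T\to0^+}a_1(T)\infeg\eta/2$ whenever $\norme{\El}{u_0}\infeg\epsilon_0$. Then the set $\{T\in(0,\min(T_{max},1)]:a_1(T)\infeg\eta\}$ is non-empty, closed (continuity of $a_1$), and open (on it the inequality improves to $a_1(T)\infeg 2C^{3/2}\norme{\El}{u_0}\infeg\eta/2$, and continuity propagates this to a neighbourhood); hence it is the whole interval, and in fact $a_1(T)\infeg 2C^{3/2}\norme{\El}{u_0}$ there.

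With $a_1(T)\infeg\eta$ on all of $(0,\min(T_{max},1)]$, the cases $\alpha=2,3$ require no continuity argument: from the two displayed inequalities and $C\eta\infeg 1/2$ one gets $a_\alpha(T)\infeg 2Cb_\alpha(T)$, then $b_\alpha(T)^2\infeg C\norme{\El^\alpha}{u_0}^2+4C^3\eta\,b_\alpha(T)^2$, so $4C^3\eta\infeg 1/2$ lets us absorb (legitimately, as $b_\alpha(T)<\infty$) to obtain $a_\alpha(T)\infeg C_\alpha\norme{\El^\alpha}{u_0}$ for $\alpha=2,3$, uniformly in $T<\min(T_{max},1)$ and in $\lambda$. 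Finally $T_{max}>1$: by (\ref{estimation injection continue}) the bound on $a_3$ gives $\norme{L^\infty_T\El^3}{u}\lesssim\norme{\El^3}{u_0}$ for every $T<\min(T_{max},1)$; if $T_{max}\infeg 1$, then $\sup_{|t|<T_{max}}\norme{\El^3}{u(t)}<+\infty$, and the continuation criterion underlying proposition~\ref{proposition existence haute regularite} — the energy estimate (\ref{estimation energie standard}) together with $\norme{L^1_TL^\infty_{xy}}{\drx u}\lesssim T\norme{L^\infty_T\El^3}{u}$ prevents any $\El^\alpha$ norm from blowing up first — allows the solution to be extended beyond $T_{max}$, a contradiction. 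Hence $u\in\mathcal{C}\bigl([-1,1],\El^\infty\bigr)$ and (\ref{estimation controle norme avec donnee}) holds for $\alpha=1,2,3$, while uniqueness on $[-1,1]$ follows from the local uniqueness in proposition~\ref{proposition existence haute regularite} by a connectedness argument. The main technical point, beyond the three core estimates which are already in hand, is the continuity of $T\mapsto\norme{\Fl^\alpha(T)}{u}$ and its $T\to 0^+$ behaviour (what makes the $\alpha=1$ continuity argument both start and close); establishing that the smooth local solution lies in $\Fl^\alpha(T)$ at all is a short argument of the same flavour.
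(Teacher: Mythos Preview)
Your proof is correct and follows essentially the same route as the paper: combine (\ref{equation estimation linéaire})--(\ref{estimation bilineaire globale})--(\ref{estimation energie periode}), run a continuity argument at $\alpha=1$, absorb for $\alpha=2,3$, and extend to $T=1$ via the $\El^3$ bound and proposition~\ref{proposition existence haute regularite}. The only cosmetic difference is that the paper bootstraps the quantity $\Gl{\alpha}(T'):=\norme{\Bl^{\alpha}(T')}{u}+\norme{\Nl^{\alpha}(T')}{u\drx u}$ rather than $\norme{\Fl^{\alpha}(T')}{u}$, which is convenient because the continuity lemma (lemma~\ref{lemme continuite}) is proved directly for the $\Bl$ and $\Nl$ norms---the $\Bl$ part being immediate from the definition---so one avoids having to establish continuity of $T\mapsto\norme{\Fl^{\alpha}(T)}{u}$ separately.
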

\begin{proof}
Let $T=T\left(\norme{\El^3}{u_0}\right)\in ]0;1]$ and $u\in\mathcal{C}([-T;T],\El^{\infty})$ be the solution to (\ref{equation KP1}) given by proposition~\ref{proposition existence haute regularite}. Then, for $T'\in [0;T]$, we define
\begin{equation}\label{definition X argument continuite}
\Gl{\alpha}(T') := \norme{\Bl^{\alpha}(T')}{u}+\norme{\Nl^{\alpha}(T')}{u\drx u}
\end{equation}
Recalling (\ref{equation estimation linéaire})-(\ref{estimation bilineaire globale})-(\ref{estimation energie periode}) for $\alpha \in\N^*$, we get
\begin{equation}\label{estimations synthese}
\begin{cases}
\norme{\Fl^{\alpha}(T)}{u}\lesssim \norme{\Bl^{\alpha}(T)}{u}+\norme{\Nl^{\alpha}(T)}{f}\\
\norme{\Nl^{\alpha}(T)}{\drx(uv)}\lesssim \norme{\Fl^{\alpha}(T)}{u}\norme{\Fl(T)}{v}+\norme{\Fl(T)}{u}\norme{\Fl^{\alpha}(T)}{v}\\
\norme{\Bl^{\alpha}(T)}{u}^2\lesssim \norme{\El^{\alpha}}{u_0}^2 + \norme{\Fl(T)}{u}\norme{\Fl^{\alpha}(T)}{u}^2
\end{cases}
\end{equation}
Thus, combining those estimates first with $\alpha=1$, we deduce that
\begin{equation}\label{estimation argument continuite}
\Gl{1}(T')^2 \infeg c_1\norme{\El}{u_0}^2+c_2\left(\Gl{1}(T')^3+\Gl{1}(T')^4\right)
\end{equation}
Let us remind here that the constants appearing in (\ref{equation estimation linéaire})-(\ref{estimation bilineaire globale})-(\ref{estimation energie periode}) do \emph{not} depend on $\lambda \supeg 1$, so neither does (\ref{estimation argument continuite}). Thus, using lemma~\ref{lemme continuite} below and a continuity argument, we get that there exists $T_0=T_0(\epsilon_0)\in ]0;1]$ such that $\Gl{1}(T)\infeg 2c_0\epsilon_0$ for $T\in [0;T_0]$. Thus, if we choose $\epsilon_0$ small enough such that
\[2c_2c_0\epsilon_0+ 4c_2c_0^2\epsilon_0^2<\frac{1}{2}\]
then
\[\Gl{1}(T)\lesssim \norme{\El}{u_0}\]for $T\in [0;T_0]$.

(\ref{estimation controle norme avec donnee}) for $\alpha=1$ then follows from (\ref{equation estimation linéaire}).

Next, substituting the estimate obtained above in (\ref{estimations synthese}), we infer that for $\alpha= 2,3$
\[\Gl{\alpha}(T)^2\infeg c_{\alpha}\norme{\El^{\alpha}}{u_0}^2+\widetilde{c_{\alpha}}\epsilon_0\Gl{\alpha}(T)^2\]
which in turn, up to chosing $\epsilon_0$ even smaller such that $\widetilde{c_{\alpha}}\epsilon_0<1/2$, gives (\ref{estimation controle norme avec donnee}) for $\alpha=2,3$. 

To reach $T=1$, we just have to use (\ref{estimation controle norme avec donnee}) with $\alpha=3$ along with (\ref{estimation injection continue}), and then extend the lifespan of $u$ by using proposition~\ref{proposition existence haute regularite} a finite number of times.
\end{proof}
Therefore it remains to prove the following lemma :
\begin{lemme}\label{lemme continuite}
Let $T\in ]0;1]$ and $u\in\mathcal{C}\left([-T;T],\El^{\infty}\right)$. Then $\Gl{1} : [0;T]\rightarrow \R$, defined in (\ref{definition X argument continuite}), is continuous and nondecreasing, and furthermore
\[
\lim_{T'\rightarrow 0}\Gl{1}(T')\infeg c_0 \norme{\El}{u_0}
\]
where $c_0>0$ is a constant independent of $\lambda$.
\end{lemme}
\begin{proof}
From the definition of $\Bl(T)$ (\ref{definition espace energie}) it is clear that for $u\in\mathcal{C}([-T;T],\El^{\infty})$,\\ $ T'\mapsto \norme{\Bl(T')}{u}$ is nondecreasing and continuous and satisfies 
\[
\lim_{T'\rightarrow 0}\norme{\Bl(T')}{u}\lesssim \norme{\El}{u_0}
\]
where the constant only depends on the choice of the dyadic partition of unity.

Thus it remains to prove that for all $v\in\mathcal{C}([-T;T],\El^{\infty})$,  ${\displaystyle T'\mapsto \norme{\Nl(T')}{v}}$ is increasing and continuous on $[0;T]$ and satisfies
\begin{equation}\label{limite norme N}
\lim_{T'\rightarrow 0}\norme{\Nl(T')}{v}=0
\end{equation}
The proof is the same as in \cite[Lemma 6.3]{KenigPilod} or \cite[Lemma 8.1]{GuoOh2015} : first, for $M>0$ and $T'\in [0;T]$, take an extension $v_M$ of $P_Mv$ outside of $[-T;T]$, then using the definition of $\nl{M}^{b_1}$ we get
\[\norme{\nl{M}^{b_1}(T')}{P_M v}\lesssim \norme{\nl{M}^{b_1}}{\chi_{T'}(t)v_M}\lesssim \normL{2}{p\cdot\F\left\{\chi_{T'}(t)v_M\right\}}\]
Using the Littlewood-Paley theorem, we obtain the bound
\begin{multline}\label{estimation continuite en 0 norme N}
\norme{\Nl(T')}{v}=\left(\sum_{M>0}(1\ou M)^2\norme{\nl{M}^{b_1}(T')}{P_Mv}^2\right)^{1/2}\\
 \lesssim \left(\sum_{M>0}(1\ou M)^2\normL{2}{p\cdot\F\left\{\chi_{T'}(t)v_M\right\}}^2\right)^{1/2}\\
  \lesssim \norme{L^2_T\El}{\chi_{T'}v}
 \lesssim (T')^{1/2}\norme{L^{\infty}_T\El}{v}
\end{multline}
This proves (\ref{limite norme N}) and the continuity at $T'=0$. The nondecreasing property follows from the definition of $\norme{Y(T')}{\cdot}$ (\ref{definition norme localisation T}). It remains to prove the continuity in $T_0\in ]0;T]$. 

Let $\epsilon >0$. If we define for $u_0\in L^2(\R\times\T)$ and $L>0$,
\[\P_L u_0 := \F^{-1}\left\{\chi_L(\omega(\xi,q))\widehat{u_0}\right\}\]
then by monotone convergence theorem we can take $L$ large enough such that 
\[\norme{\Nl(T_0)}{(\mathrm{Id}-\P_L)v}<\epsilon\]
Then it suffices to show that there exists $\delta_0>0$ such that for $r\in [1-\delta_0;1+\delta_0]$,
\[\left|\norme{\Nl(T_0)}{v_L}-\norme{\Nl(rT_0)}{v_L}\right|<\epsilon\]
Thus we may assume $v= \P_L v$ in the sequel. In particular, $P_Mv =0$ if $M^3\gtrsim L$.

As in \cite{IonescuKenigTataru2008}, we define for $r$ close to 1 the scaling operator
\[D_r(v)(t,x,y):=v(t/r,x,y)\]
Proceeding as in (\ref{estimation continuite en 0 norme N}), we have
\[\norme{\Nl(T')}{v-D_{T'/T_0}(v)}\lesssim (T')^{1/2}\norme{L_{T}^{\infty}\El}{v-D_{T'/T_0}(v)}\underset{T'\rightarrow T_0}{\longrightarrow}0\]
where we use that $v\in\mathcal{C}([-T;T],\El)$ to get the convergence.

Consequently, we are left with proving
\begin{equation}\label{estimation continuite norme liminf}
\norme{\Nl(T_0)}{v}\infeg \liminf_{r\rightarrow 1}\norme{\Nl(rT_0)}{D_r(v)}
\end{equation}
and 
\begin{equation}\label{estimation continuite norme limsup}
 \limsup_{r\rightarrow 1}\norme{\Nl(rT_0)}{D_r(v)}\infeg \norme{\Nl(T_0)}{v}
\end{equation}
Let us begin with (\ref{estimation continuite norme liminf}). Fixing $\widetilde{\epsilon}>0$ and $r\in[1/2;2]$, for any $M\in 2^{\Z}$, $M^3\lesssim L$, we can choose an extension $v_{M,r}$ of $P_MD_{r}(v)$ satisfying $v_{M,r}\equiv P_MD_r(v)$ on $[-rT_0;rT_0]$ and 
\[\norme{\nl{M}^{b_1}}{v_{M,r}}\infeg \norme{\nl{M}^{b_1}(rT_0)}{P_MD_r(v)}+\widetilde{\epsilon}\]
Since $D_{1/r}(v_{M,r})\equiv P_Mv$ on $[-T_0;T_0]$, it defines an extension of $P_Mv$ and thus
\[\norme{\Nl(T_0)}{v}\infeg \left(\sum_{M\lesssim L^{1/3}}(1\ou M)^2\norme{\nl{M}^{b_1}}{D_{1/r}(v_{M,r})}^2\right)^{1/2}\]
Finally, it remains to prove that
\begin{equation}\label{estimation cle liminf}
\norme{\nl{M}^{b_1}}{D_{1/r}(v_{M,r})}\infeg \psi(r)\norme{\nl{M}^{b_1}}{v_{M,r}}
\end{equation}
to get (\ref{estimation continuite norme liminf}), where $\psi$ is a continuous function defined on a neighborhood of $r=1$ and satisfying $\limit{r\rightarrow 1}\psi(r)=1$.

From the definition of $\nl{M}^{b_1}$, we have
\begin{multline*}
\norme{\nl{M}^{b_1}}{D_{1/r}(v_{M,r})}\\=\sup_{t_M\in\R}\norme{\X{M}^{b_1}}{(\tau-\omega+i(1\ou M))^{-1}p\F\left\{\chi_{(1\ou M)^{-1}}(\cdot-t_M)D_{1/r}(v_{M,r})\right\}}
\end{multline*}
 and a computaton gives
\[\chi_{(1\ou M)^{-1}}(\cdot-t_M)D_{1/r}(v_{M,r}) = D_{1/r}\left(\chi_{r(1\ou M)^{-1}}(\cdot -rt_M)v_{M,r}\right)\]
so that
\[\F\left\{\chi_{(1\ou M)^{-1}}(\cdot-t_M)D_{1/r}(v_{M,r})\right\} = r^{-1}D_r\left(\F\left\{\chi_{r(1\ou M)^{-1}}(\cdot -rt_M)v_{M,r}\right\}\right)\]
Thus, using the definition of $\X{M}^{b_1}$, the left-hand side of (\ref{estimation cle liminf}) equals
\begin{multline*}
r^{-1/2}\sup_{\widetilde{t_M}\in\R}\sum_{K\supeg 1}K^{1/2}\beta_{M,K}^{b_1}\\
\normL{2}{(r\tau -\omega+i(1\ou M))^{-1}\rho_K(r\tau - \omega)p\F\left\{\chi_{r(1\ou M)^{-1}}(\cdot - \widetilde{t_M})v_{M,r}\right\}}
\end{multline*}
Now, for $r\sim 1$, we observe that for $K \supeg 10^{10} L$, we have $|\tau|\sim |\tau - \omega|\sim |r\tau - \omega|\sim K$, whereas for $K \lesssim L$ we have $|\tau|$, $|\tau-\omega|$ and $|r\tau - \omega| \lesssim L$.

Thus,
\begin{multline}\label{estimation continuite comutateur 1}
\left|\frac{1}{(r\tau-\omega)^2+(1\ou M)^2}-\frac{1}{(\tau-\omega)^2+(1\ou M)^2}\right|\\ \lesssim |1-r|(1\ou L)^2\cdot\frac{1}{(\tau-\omega)^2+(1\ou M)^2}
\end{multline}
and the use of the mean value theorem provides
\begin{equation}\label{estimation continuite comutateur 2}
\left|\rho_K(r\tau - \omega)-\rho_K(\tau-\omega)\right|
\lesssim |1-r|\begin{cases}
{\displaystyle \sum_{K'\sim K}\rho_{K'}(\tau-\omega) \text{ if }K\supeg 10^{10}L}\\ {\displaystyle K^{-1}L\sum_{K'\lesssim L}\rho_{K'}(\tau-\omega) \text{ if }K\lesssim L}
\end{cases}
\end{equation}
Combining all the estimates above, we get the bound
\begin{multline}\label{estimation continuite comutateur 3}
\norme{\nl{M}^{b_1}}{D_{1/r}(v_{M,r})} \\
\infeg \widetilde{\psi}(r)\sup_{\widetilde{t_M}}\norme{\X{M}^{b_1}}{(\tau-\omega+i(1\ou M))^{-1}p\F\left\{\chi_{r(1\ou M)^{-1}}(\cdot -\widetilde{t_M})v_{M,r}\right\}}
\end{multline}
where ${\displaystyle \widetilde{\psi}(r) = r^{-1/2}\left(1+C(1\ou L)^2|r-1|\right)^{3/2}}\underset{r\rightarrow 1}{\longrightarrow}1$.

It remains to treat the time localization term : using the fundamental theorem of calculus, we have
\[F(t-\widetilde{t_M}) := \chi_{r(1\ou M)^{-1}}(t-\widetilde{t_M}) - \chi_{(1\ou M)^{-1}}(t-\widetilde{t_M}) = \int_1^{r^{-1}}s^{-1}\varphi(s(1\ou M)(t-\widetilde{t_M}))\ds\]
with $\varphi(t) := t\chi'(t)$. In particular, for $r\in[1/2;2]$, from the support property of $\chi$, the support of $F(\cdot -\widetilde{t_M})$ is included in $[\widetilde{t_M}-4(1\ou M);\widetilde{t_M}+4(1\ou M)]$, thus we can represent 
\[F(t-\widetilde{t_M}) = F(t-\widetilde{t_M})\sum_{|\ell|\infeg 4}\gamma((1\ou M)(t-\widetilde{t_M} - \ell)\chi_{(1\ou M)^{-1}}(t-\widetilde{t_M}-\ell(1\ou M)^{-1})\]
where $\gamma$ is a smooth partition of unity with $\supp \gamma\subset [-1;1]$ satisfying $\forall x\in\R$, ${\displaystyle \sum_{\ell\in\Z}\gamma(x-\ell) = 1}$.

Now, using Minkowski's integral inequality to deal with the integral in $s$, the right-hand-side of (\ref{estimation continuite comutateur 3}) is less than
\begin{multline*}
\widetilde{\psi}(r)\left(\norme{\nl{M}^{b_1}}{v_{M,r}}+\int_{I(r)}s^{-1}\sup_{\widetilde{t_M}}\sum_{|\ell|\infeg 4}\left|\left|(\tau-\omega+i(1\ou M))^{-1}p\F\left\{\varphi(s(1\ou M)(t-\widetilde{t_M})\right.\right.\right.\right. \\
\left.\left.\left.\left.\cdot\gamma_{(1\ou M)^{-1}}(t-\widetilde{t_M}-(1\ou M)^{-1}\ell)\chi_{(1\ou M)^{-1}}(t-\widetilde{t_M}-(1\ou M)^{-1}\ell)v_{M,r}\right\}\right|\right|_{\X{M}^{b_1}}\ds\right)
\end{multline*}
with $I(r) = [1;r^{-1}]$ if $r\in [1/2;1]$ and $I(r) = [r^{-1};1]$ if $r\in [1;2]$.

Since $\varphi(t) = t\chi'(t)$ and $\gamma$ are smooth, twice the use of (\ref{propriete XM}) and (\ref{estimation cle localisation XM}) (with $K_0 = s(1\ou M)$ and $K_0=(1\ou M)$ respectively) provides the final bound
\begin{equation}\label{estimation continuite liminf finale}
\norme{\nl{M}^{b_1}}{D_{1/r}(v_{M,r})}\infeg \widetilde{\psi}(r)\left(1+C\left|\ln(r)\right|\right)\norme{\nl{M}^{b_1}}{v_{M,r}}
\end{equation}
(here we used that the implicit constant in (\ref{propriete XM}) and (\ref{estimation cle localisation XM}) are independent of $s$). This concludes the proof of (\ref{estimation continuite norme liminf}).

To prove (\ref{estimation continuite norme limsup}), as before we may assume $v =\P_Lv$. Given $\widetilde{\epsilon}>0$ for any $M>0$ we take an extension $v_M$ of $P_Mv$ outside of $[-T_0;T_0]$ and satisfying ${\displaystyle \norme{\nl{M}^{b_1}}{v_M}\infeg \norme{\nl{M}^{b_1}(T_0)}{P_Mv}+\widetilde{\epsilon}}$. Then for $r\in [-1/2;2]$, $D_r(v_M)$ defines an extension of $P_MD_r(v)$ outside of $[-rT_0;rT_0]$. Then, since in the proof of (\ref{estimation continuite liminf finale}) we did not used the dependence in $r$ of $v_{M,r}$, the same estimate actually holds for $v_M$, and thus
\[\norme{\nl{M}^{b_1}}{D_r(v_M)} \infeg \psi(1/r)\norme{\nl{M}^{b_1}}{v_M}\]
which is enough for (\ref{estimation continuite norme limsup}) and thus concludes the proof of the lemma.
\end{proof}
\subsection{Global well-posedness for smooth data}
In view of the previous proposition, theorem~\ref{theoreme principal}~\ref{theoreme principal partie 1} follows from the conservation of the energy.

Indeed, take $u_0\in\El^{\infty}$ satisfying 
\begin{equation}\label{critere taille donnee existence globale}
\norme{\El}{u_0}\infeg \epsilon_1\infeg \epsilon_0
\end{equation}
and let $T^*:= \sup\{T\supeg 1,~\norme{\El}{u(T)}<+\infty\}$ where $u$ is the unique maximal solution of (\ref{equation KP1}) given by proposition~\ref{proposition controle solutions regulieres}. Then, using the anisotropic Sobolev estimate (see \cite[Lemma 2.5]{Tom})
\begin{equation}\label{estimation Sobolev anisotrope}
\int_{\R\times\Tl}u_0(x,y)^3\dx\dy\infeg 2\normL{2}{u_0}^{3/2}\normL{2}{\drx u_0}\normL{2}{\drx^{-1}\dry u_0}^{1/2}
\end{equation}
we have for $T<T^*$
\begin{multline*}
\norme{\El}{u(T)}^2 = \mathcal{M}(u(T)) + \mathcal{E}(u(T)) + \frac{1}{3}\int_{\R\times\T}u^3(T,x,y)\dx\dy \\
\infeg \mathcal{M}(u(T)) + \mathcal{E}(u(T)) + 2\normL{2}{u(T)}^{3/2}\normL{2}{\drx u(T)}\normL{2}{\drx^{-1}\dry u(T)}^{1/2}
\\
\infeg  \mathcal{M}(u(T)) + \mathcal{E}(u(T)) + 2\mathcal{M}(u(T))\norme{\El}{u(T)}^2
\end{multline*}
Thus, from the conservation of $\mathcal{M}$ and $\mathcal{E}$ (as $u$ is a smooth solution), we finally obtain
\[
\norme{\El}{u(T)}^2\lesssim \mathcal{M}(u_0) + \mathcal{E}(u_0)<+\infty
\]
for any $T<T^*$ provided $\epsilon_1^2<1/4$, from which we get $T^* = +\infty$.\\

Finally, let us notice that equation (\ref{equation KP1}) admits the scaling
\begin{equation}\label{propriete scaling}
u_{\lambda}(t,x,y):= \lambda^{-1}u(\lambda^{-3/2}t,\lambda^{-1/2}x,\lambda^{-1}y),~(x,y)\in\R\times\T_{\lambda\lambda_0}
\end{equation}
meaning that $u_{\lambda}$ is a solution of (\ref{equation KP1}) on $[-\lambda^{3/2}T;\lambda^{3/2}T]\times \R\times \T_{\lambda\lambda_0}$ if and only if $u$ is a solution of (\ref{equation KP1}) on $[-T;T]\times\R\times\T_{\lambda_0}$. Moreover,
\[\norme{\E_{\lambda\lambda_0}}{u_{\lambda}(0)} \lesssim \lambda^{-1/4}\norme{\E_{\lambda_0}}{u(0)}\]
Thus, take $u_0\in \E_{\lambda_0}^{\infty}$. If ${\displaystyle \norme{\E_{\lambda_0}}{u_0}>\epsilon_1}$, then there exists \[\lambda=\lambda\left(\norme{\E_{\lambda_0}}{u_0}\right)\sim \epsilon_1^{-4}\norme{\E_{\lambda_0}}{u_0}^{-4}> 1\]
such that ${\displaystyle \norme{\E_{\lambda\lambda_0}^{\alpha}}{u_{0,\lambda}}\infeg \epsilon_1}$ (since $\epsilon_1>0$ is independent of $\lambda\supeg 1$). Thus, if $u_{\lambda}\in \mathcal{C}(\R,\E_{\lambda\lambda_0}^{\infty})$ is the unique global solution associated with $u_{0,\lambda}$ satisfying (\ref{critere taille donnee existence globale}) , then \[u(t,x,y) := \lambda u_{\lambda}(\lambda^{3/2}t,\lambda^{1/2}x,\lambda y)\in\mathcal{C}\left(\R,\E_{\lambda_0}^{\infty}\right)\] is the unique global solution associated with $u_0$.\\

The rest of the section is devoted to the proof of theorem~\ref{theoreme principal}~\ref{theoreme principal partie 2}.
\subsection{Lipschitz bound for the difference of small data solutions}
Let $T>0$, $u_0,v_0\in \El$ and $u,v$ in the class (\ref{classe unicite}) be the corresponding solutions of the Cauchy problems (\ref{equation KP1}).
As before, up to rescaling and using the conservation of $\mathcal{M}$ and $\mathcal{E}$, it suffices to prove uniqueness for $T=1$ and
\[\norme{\El}{u_0},\norme{\El}{v_0}\infeg \epsilon_2 \infeg \epsilon_0\]
 Set $w:=u-v$. Then $w$ is also in the class (\ref{condition taille donnee initiale}) and solves the equation
\begin{equation}\label{equation unicite}
\drt w + \drx^3w - \drx^{-1}\dry w + \drx\left(w\cdot\frac{u+v}{2}\right)=0
\end{equation} 
on $[-1;1]\times\R\times\Tl$.
Then, since $u_0,v_0$ satisfy (\ref{condition taille donnee initiale}), using (\ref{estimation controle norme avec donnee}) and then (\ref{estimation injection continue difference}), (\ref{equation estimation linéaire difference})-(\ref{estimation bilineaire difference globale})-(\ref{estimation energie difference periode}), we obtain for $\epsilon_2$ small enough
\begin{equation}\label{estimation cle unicite}
\norme{L^{\infty}_{[-1;1]}L^2_{xy}}{w}\lesssim \norme{\overline{\Fl}(1)}{w}\lesssim \normL{2}{u_0-v_0}
\end{equation}
from which we get $u\equiv v$ on $[-1;1]$ if $u_0=v_0$.
\subsection{Global well-posedness in the energy space}
In this subsection we end the proof of theorem~\ref{theoreme principal}~\ref{theoreme principal partie 2}. We proceed as in \cite[Section 4]{IonescuKenigTataru2008}.\\

Take $T>0$, and let $u_0\in\El$ and ${\displaystyle (u_{0,n})\in\left(\El^{\infty}\right)^{\N}}$ such that $(u_{0,n})$ converges to $u_0$ in $\El$. Again, up to rescaling we can assume ${\displaystyle \norme{\El}{u_0}\infeg \epsilon\infeg\epsilon_2}$ and ${\displaystyle \norme{\El}{u_{0,n}}\infeg \epsilon\infeg\epsilon_2}$. Using again the conservation of $\mathcal{M}$ and $\mathcal{E}$, it then suffices to prove that ${\displaystyle \left(\Phi^{\infty}(u_{0,n})\right)\in \left(\mathcal{C}([-1;1],\El^{\infty})\right)^{\N}}$ is a Cauchy sequence in $\mathcal{C}([-1;1],\El)$.\\

For a fixed $M>1$ and $m,n\in\N$, we can split
\begin{multline*}
\norme{L^{\infty}_1\El}{\Phi^{\infty}(u_{0,m})-\Phi^{\infty}(u_{0,n})} \infeg \norme{L^{\infty}_1\El}{\Phi^{\infty}(u_{0,m})-\Phi^{\infty}(P_{\infeg M}u_{0,m})}\\+\norme{L^{\infty}_1\El}{\Phi^{\infty}(P_{\infeg M}u_{0,m})-\Phi^{\infty}(P_{\infeg M}u_{0,n})}+\norme{L^{\infty}_1\El}{\Phi^{\infty}(P_{\infeg M}u_{0,n})-\Phi^{\infty}(u_{0,n})}
\end{multline*}
Since
\[
\norme{L^{\infty}_1\El^{\alpha}}{S^{\infty}_T(P_{\infeg M}u_{0,n})}\infeg C(\alpha,M)
\]
thanks to (\ref{equation estimation norme}), the middle term is then controled with the analogous of (\ref{estimation energie standard}) for the difference equation along with a Sobolev inequality with $\alpha$ large enough, which gives
\[
\norme{L^{\infty}_1\El}{\Phi^{\infty}(P_{\infeg M}u_{0,m})-\Phi^{\infty}(P_{\infeg M}u_{0,n})} \infeg C(M)\norme{\El}{u_{0,m}-u_{0,n}}
\]
Therefore it remains to treat the first and last terms. A use of (\ref{estimation injection continue}) provides
\[
\norme{L^{\infty}_1\El}{\Phi^{\infty}(u_{0,m})-\Phi^{\infty}(P_{\infeg M}u_{0,m})} \lesssim \norme{\Fl(1)}{\Phi^{\infty}(u_{0,m})-\Phi^{\infty}(P_{\infeg M}u_{0,m})}
\]
and thus we have to estimate difference of solutions in $\Fl(1)$. Let us write $u_1:=\Phi^{\infty}(u_{0,m})$, $u_2 := \Phi^{\infty}(P_{\infeg M}u_{0,m})$ and $v:=u_1-u_2$.

Using (\ref{equation estimation linéaire}) and (\ref{estimation bilineaire globale}) combined with (\ref{estimation controle norme avec donnee}) we obtain the bound
\[
\norme{\Fl(1)}{v}\lesssim \norme{\Bl(1)}{v}+\norme{\Fl(1)}{v}\epsilon
\]
Therefore, taking $\epsilon$ small enough, it suffices to control ${\displaystyle \norme{\Bl(1)}{v}}$. Using the definition of $\Bl(1)$ (\ref{definition espace energie}), we see that
\[
\norme{\Bl(1)}{v}\infeg \norme{\El}{P_{\infeg 1}v_0} + \norme{\Bl(1)}{P_{\supeg 2}v}
\]
Now, in view of the definition of $\Bl(1)$ and $\overline{\Bl}(1)$, we have 
\[
\norme{\Bl(1)}{P_{\supeg 2}v} \sim \norme{\overline{\Bl}(1)}{\drx P_{\supeg 2}v}+\norme{\overline{\Bl}(1)}{\drx^{-1}\dry P_{\supeg 2}v}
\]
Combining this remark with the previous estimates, we finally get the bound
\begin{equation}\label{equation decomposition U et V}
\norme{\Fl(1)}{v}\lesssim \norme{\El}{v_0}+\norme{\overline{\Bl}(1)}{P_{\supeg 2}\drx v}+\norme{\overline{\Bl}(1)}{P_{\supeg 2}\drx^{-1}\dry v}
\end{equation}
We now define $U := P_{High}\drx v$ and $V:= P_{High}\drx^{-1}\dry v$. We begin by writing down the equations satisfied by $U$ and $V$ :
\begin{multline}\label{equation U}
\drt U + \drx^3 U - \drx^{-1}\dry^2 U = P_{High}(-u_1\cdot\drx U)+ P_{High}(-P_{Low}u_1\cdot\drx^2P_{Low}v)\\+P_{High}(-P_{High}u_1\cdot\drx^2P_{Low}v)+P_{High}(-\drx v\cdot\drx (u_1+u_2)) + P_{High}(-v\cdot\drx^2u_2)
\end{multline}
and
\begin{multline}\label{equation V}
\drt V + \drx^3 V - \drx^{-1}\dry^2V = P_{High}(-u_1\cdot\drx V)+P_{High}(-P_{Low}u_1\cdot \drx P_{Low}\drx^{-1}\dry v)\\+P_{High}(-P_{High}u_1\cdot \drx P_{Low}\drx^{-1}\dry v)+P_{High}(-v\cdot \dry u_2)
\end{multline}
Let us look at  (\ref{equation U}). We set $h:=-P_{Low}u_1\cdot\drx^2P_{Low}v$, $w_1:=-P_{High}u_1$, $w_1':=\drx^2P_{Low}v$, $w_2:=-\drx v$, $w_2':=\drx (u_1+u_2)$ and $w_3:=-v$, $w_3':=\drx^2u_2$. Since $u_1,u_2\in \Fl(1)$ we have $v\in \Fl(1)$, thus $h$, $w_i$ and $w_i'$ satisfy the assumptions of (\ref{estimation energie difference specifique}). Thence we infer
\begin{multline*}
\norme{\overline{\Bl}(1)}{U}^2\lesssim \norme{L^2_{\lambda}}{\drx v_0}^2+\norme{\Fl(1)}{u_1}\norme{\overline{\Fl}(1)}{U}^2\\ + \norme{\overline{\Fl}(1)}{U}\left(\norme{\overline{\Fl}(1)}{P_{High}u_1}\norme{\overline{\Fl}(1)}{\drx^2P_{Low}v}\right.\\ \left.+\norme{\overline{\Fl}(1)}{\drx v}\norme{\overline{\Fl}(1)}{\drx (u_1+u_2)}+\norme{\overline{\Fl}(1)}{v}\norme{\overline{\Fl}(1)}{\drx^2 u_2}\right)
\end{multline*}
Therefore, using (\ref{estimation controle norme avec donnee}) and (\ref{estimation cle unicite}), the previous estimate reads
\begin{multline*}
\norme{\overline{\Bl}(1)}{U}^2\lesssim \norme{\El}{v_0}^2+\epsilon\norme{\overline{\Fl}(1)}{U}^2 + \norme{\overline{\Fl}(1)}{U}\left(\epsilon\norme{L^2_{\lambda}}{v_0}\right.\\ \left.+\norme{\Fl(1)}{v}\epsilon+\norme{L^2_{\lambda}}{v_0}\norme{\Fl^2(1)}{u_2}\right)
\end{multline*}
Proceeding similarly for $V$, we obtain the estimate
\begin{multline*}
\norme{\overline{\Bl}(1)}{V}^2
\lesssim \norme{L^2_{\lambda}}{\drx^{-1}\dry v_0}^2 + \norme{\Fl(1)}{u_1}\norme{\overline{\Fl}(1)}{V}^2+\norme{\overline{\Fl}(1)}{V}\\
\cdot\left(\norme{\overline{\Fl}(1)}{P_{High}u_1}\norme{\overline{\Fl}(1)}{\drx P_{Low}\drx^{-1}\dry v}+\norme{\overline{\Fl}(1)}{v}\norme{\overline{\Fl}(1)}{\drx \drx^{-1}\dry u_2}\right)
\end{multline*}
after applying (\ref{estimation energie difference specifique}). Again, a use of (\ref{estimation controle norme avec donnee}) and (\ref{estimation cle unicite}) gives
\[\norme{\overline{\Bl}(1)}{V}^2\lesssim \norme{\El}{v_0}^2 + \epsilon\norme{\overline{\Fl}(1)}{V}^2+\norme{\overline{\Fl}(1)}{V}\left(\epsilon\norme{\Fl(1)}{v}+\norme{L^2_{\lambda}}{v_0}\norme{\Fl^2(1)}{u_2}\right)\]
Combining the estimates for $U$ and $V$ along with (\ref{equation decomposition U et V}), we get the final bound
\[
\norme{\Fl(1)}{v}\lesssim \norme{\El}{v_0}+\epsilon\norme{\Fl(1)}{v}+\norme{\El^2}{P_{\infeg M}u_{0,m}}\norme{L^2_{\lambda}}{v_0}
\]
since ${\displaystyle \norme{\Fl^2(1)}{u_2}\lesssim \norme{\El^2}{u_2(0)}}$ by (\ref{estimation controle norme avec donnee}).\\

Taking $\epsilon$ small enough and $M>1$ large enough concludes the proof. 
\section{Orbital stability of the line soliton}\label{section stabilite}
In this last section, we turn to the proof of corollary~\ref{theoreme instabilite}. We briefly recall the main steps of \cite[Section 2]{Rousset2012}.\\

Let us remember that equation (\ref{equation KP1}) has a Hamiltonian structure, with Hamiltonian $E(u)$. To study the orbital stability of $Q_c(x-ct)$, we first make a change of variable to see $Q_c(x)$ as a stationary solution of (\ref{equation KP1}) rewriten in a moving frame :
\begin{equation}\label{equation KP1 moving frame}
\drt u - c\drx u + \drx^3u-\drx^{-1}\dry^2 u + u\drx u = 0
\end{equation}
Equation (\ref{equation KP1 moving frame}) still has a Hamitonian structure, with the new Hamiltonian 
\[
\mathcal{E}_c(u) := \mathcal{E}(u)+c\mathcal{M}(u)
\]
The key idea of the proof is then to show, as for the orbital stability of $Q_c$ under the flow of KdV \cite{Benjamin}, that the Hessian of $\mathcal{E}_c$ about $Q_c$ is strictly positive on the codimension-2 subspace ${\displaystyle H:=\left\{\langle v,Q_c\rangle_{L^2} = \langle v,Q_c'\rangle_{L^2} = 0\right\}}$ to get a lower bound on $\mathcal{E}_c(\Phi^1(u_0)(t)) - \mathcal{E}_c(Q_c)$ in term of $\norme{\E}{\Phi^1(u_0)(t) - Q_c}$. 

To study $D^2\mathcal{E}_c(Q_c)$ on $H$, we begin by computing
\begin{multline*}
\mathcal{E}_c(Q_c+v(t)) = \mathcal{E}_c(Q_c) + \left(\normL{2}{\drx v}^2 + \normL{2}{\drx^{-1}\dry v}^2 + c\normL{2}{v}^2 - \int_{\R\times\T}Q_c\cdot v^2\dx\dy\right)\\ - \int_{\R\times\T}v^3\dx\dy
\end{multline*}
The linear term in $v$ vanishes since $Q_c$ is a stationary solution.

Using the Plancherel identity in the $y$ variable, we can write the Hessian of $\mathcal{E}_c$ about $Q_c$ as the sum of the bilinear forms
\[
\frac{1}{2}D^2\mathcal{E}_c(Q_c)(v,v) = \sum_{k\in \Z} B_c^k(\F_yv(t,x,k),\F_yv(t,x,k))
\]
with
\[
B_c^k\left(\widetilde{v}(x),\widetilde{v}(x)\right) = \normL{2}{\drx \widetilde{v}}^2 + k^2\normL{2}{\drx^{-1}\widetilde{v}}^2 + c\normL{2}{\widetilde{v}}^2 - \int_{\R}Q_c\cdot \widetilde{v}^2\dx
\]
Observe that $B_c^0$ is the Hessian about $Q_c$ of the Hamiltonian associated with the KdV equation in a moving frame, and thus by the study in \cite{Benjamin} $B_c^0$ is $H^1$ bounded from below as desired.

To treat the terms with $k\neq 0$, first make the change of test function \[f (x):= \drx^{-1}\F_y v(t,x,k)\in L^2(\R)\] Then, using that $k^2\supeg 1$, we can write 
\[
B_c^k(\F_y v(t,x,k),\F_y v(t,x,k)) \supeg \langle \L_c f,f\rangle
\]
where the linear operator $\L_c$ is defined as 
\[
\L_c := \drx^4 -c\drx^2 +\drx Q_c\drx +1
\]
Since $Q_c$ is exponentially decreasing, $\drx Q_c \drx$ is compact with respect to $\drx^4-c\drx^2+1$ and thus $\mathrm{Spec}_{ess}\L_c \subset [1,+\infty[$. To get a lower bound on $\langle \L_c f,f\rangle$, it remains to study the existence of negative eigenvalues.
Following the method of \cite{ALEXANDER1997}, a change of variables leads to consider the eigenvalue problem
\begin{equation}\label{equation vp}
g^{(4)}-4\left(1-\frac{3}{\cosh^2}\right)g''+3\nu^2g = 0
\end{equation}
where
\[
3\nu^2 = \frac{16}{c^2}(1-\lambda_0)
\]
and $\lambda_0\infeg 0$ is the possible negative eigenvalue. Using again the exponential decreasing of $Q_c$, $g$ behaves at infinity as a solution of the linear equation
\begin{equation}\label{equation vp lineaire}
h^{(4)}-4h''+3\nu^2h = 0
\end{equation}
For each characteristic value $\mu$ of (\ref{equation vp lineaire}), there is an exact solution
\[
g_{\mu}(x):=\e^{\mu x}\left(\mu ^3 + 2\mu-3\mu^2\tanh(x)\right)
\]
of (\ref{equation vp}). For these solutions to behave as $\e^{\mu x}$ at infinity, this requires
\[
\mu^3+2\mu-3\mu^2 = 0
\]
As $\mu$ is also a characteristic value, this implies $\mu =1$ and thus $\nu^2=1$ from which we finally infer
\[
c^2=\frac{16}{3}(1-\lambda_0)
\]
Consequently, there is no possible negative eigenvalue $\lambda_0$ if $c<c^*=4/\sqrt{3}$.
 
Hence we have a lower $L^2$ bound for the bilinear form associated with $\L_c$, which provides the bound
\[
B_c^k(\widetilde{v},\widetilde{v})\gtrsim \normL{2}{\drx^{-1}\widetilde{v}}^2
\]
Linearly interpolating with the obvious bound (since $Q_c\infeg 3c$)
\[
B_c^k(\widetilde{v},\widetilde{v}) \supeg \normL{2}{\drx \widetilde{v}}^2 + \normL{2}{\drx^{-1} \widetilde{v}}^2 - 2c\normL{2}{\widetilde{v}}^2
\]
yields to an $L^2$ lower bound for $B_c^k$, which in return provides the final bound
\[
B_c^k(\widetilde{v},\widetilde{v}) \gtrsim \normH{1}{\widetilde{v}}^2+k^2\normL{2}{\drx^{-1}\widetilde{v}}^2
\]
uniformly in $k$.

The last trilinear term ${\displaystyle \int v^3}$ is treated with the anisotropic Sobolev inequality (\ref{estimation Sobolev anisotrope}).

Combining all the bounds from below finally provides a control of ${\displaystyle \norme{\E}{w}}$ in term of $\mathcal{E}_c(Q_c+w_0)-\mathcal{E}_c(Q_c)$ for any $w\in H$. The end of the proof is then standard (cf. \cite{Benjamin},\cite[Section 2]{Rousset2012}).

\section*{Acknowledgements.}
The author would like to thank his thesis supervisor Nikolay Tzvetkov for pointing out this problem as well as for many helpfull discussions.

\bibliographystyle{plain}
\bibliography{Article}
\clearpage
\end{document}